\documentclass{article}

\usepackage{geometry}
\usepackage{tikz}
\usepackage{tikz}
\usepackage{pgfplots}
\usepgfplotslibrary{groupplots}
\pgfplotsset{compat=1.18}
\usepackage{graphicx} 
\usepackage{epstopdf} 
\usepackage[colorlinks, hypertexnames=false]{hyperref}
\hypersetup{linkcolor=blue, citecolor=red} 

\usepackage{bm}
\usepackage{amsmath} 
\usepackage{amssymb} 
\usepackage{stmaryrd} 
\usepackage{multirow}

\usepackage{amsthm} 
\usepackage{mathtools} 

\usepackage[dvipsnames]{xcolor}
\definecolor{red}{rgb}{1.00,0.00,0.00}
\numberwithin{equation}{section}
\newtheorem{theorem}{Theorem}[section]
\newtheorem{lemma}{Lemma}[section]
\newtheorem{remark}{Remark}[section]
\newtheorem{definition}{Definition}[section]

\newcommand{\normmm}[1]{{\left\vert\kern-0.25ex\left\vert
		\kern-0.25ex\left\vert #1
		\right\vert\kern-0.25ex\right\vert\kern-0.25ex\right\vert}}

\newcommand{\jump}[1]{\ensuremath{\left\llbracket #1\right\rrbracket}}
\newcommand{\tdiv}{\mathrm{div}}
\newcommand{\tcurl}{\bm{\mathrm{curl}}}

\newcommand{\vertiii}[1]{{\left\vert\kern-0.25ex\left\vert\kern-0.25ex\left\vert #1
		\right\vert\kern-0.25ex\right\vert\kern-0.25ex\right\vert}}

\newcommand{\bFa}{{\overline{\mathcal{F}}_{\ba}}}
\newcommand{\Fma}{{\mathcal{F}}_{\setminus\ba}}

\newcommand{\n}{{\boldsymbol{n}}}
\newcommand{\ba}{{\mathbf{a}}}

\newcommand{\Fall}{\mathcal{F}_h}
\newcommand{\Fb}{\mathcal{F}_h\upb}

\newcommand{\Fint}{\mathcal{F}_h\upi}

\newcommand{\Fa}{\mathcal{F}_{\ba}}

\newcommand{\dK}{\partial K}

\newcommand{\mesh}{\mathcal{T}_h}

\newcommand{\vertice}{\mathcal{V}_h}

\newcommand{\mesha}{\mathcal{T}_{\ba}}
\newcommand{\oma}{\omega_{\ba}}

\newcommand{\upi}{^{\mathrm{int}}}
\newcommand{\upb}{^{\mathrm{bd}}}

\newcommand{\mBS}{\mathcal{I}_{{\rm mBS}}}

\newcommand{\mKM}{\mathcal{I}_{{\rm {mKM}}}^{p,\ba}}
\newcommand{\sumj}{\sum\limits_{j\in\{1{:}d\}}}
\newcommand{\curlrw}[1]{\underline{\nabla}_{\mathrm{rw}} {\times} {#1}}
\newcommand{\divrw}[1]{\nabla_{\mathrm{rw}} {\cdot} {#1}}

\newcommand{\curl}[1]{\nabla {\times} {#1}}
\DeclareMathOperator*{\argmin}{arg\,min}

\begin{document}

	\title{Pressure-robust $hp$-a posteriori error estimates of $ \boldsymbol{H}(\mathrm{div})$-conforming discontinuous Galerkin methods for \\the Stokes equations}

\author{Zhaonan Dong\footnotemark[1], \quad  Zuodong Wang\footnotemark[2], \quad Lina Zhao\footnotemark[3]}

\footnotetext[1]{Inria, 48 rue Barrault, 75647 Paris, France and CERMICS, CNRS, ENPC, Institut Polytechnique de Paris,  6 \& 8 avenue B.~Pascal, 77455 Marne-la-Vall\'{e}e, France. ({zhaonan.dong@inria.fr})}

\footnotetext[2]{School of Mathematical Sciences; Eastern Institute of Technology, Ningbo, Zhejiang, 315200, China.  ({zdwang@eitech.edu.cn}). }

\footnotetext[3]{Department of Mathematics, City University
	of Hong Kong, Kowloon Tong, Hong Kong SAR, China.  ({linazha@cityu.edu.hk}). }

\maketitle
\begin{abstract}
We devise and analyze a pressure-robust residual-based $hp$-a posteriori error estimator for $\boldsymbol{H}(\mathrm{div})$-conforming discontinuous Galerkin (dG) methods for the Stokes problem on two- and three-dimensional polytopal Lipschitz domains. The estimator provides an upper bound and a local lower bound for the velocity error in the energy norm, both robust with respect to the viscosity and independent of the pressure. Our analysis relies on a decomposition of the error into conforming and nonconforming parts. The nonconforming error is bounded using a partition-of-unity framework combined with local Helmholtz decompositions on vertex patches. The conforming error is analyzed by means of the generalized Bogovskiĭ operator of \cite{CostabelMcIntosh2010} in both two and three dimensions, yielding two pressure-independent residual-based estimators associated with different interpolation operators. In the first approach, the upper bound for the conforming error consists of five error indicators and a data oscillation term. Four of these indicators exhibit $p$-optimal scaling, while the remaining one is suboptimal by a factor of $p^{1/2}$. In the second approach, the upper bound involves only two residual indicators together with the data oscillation term, at the expense of losing one order in $p$. Moreover, a pressure-robust local lower bound is established using $H^2$-bubble functions inspired by techniques developed for fourth-order PDEs. Numerical results in two and three dimensions confirm the reliability, efficiency, and pressure-robustness of the proposed estimators.
\end{abstract}



%

\section{Introduction}
Stokes problems arise in many scientific and engineering applications, including incompressible fluid dynamics, porous media flow, microfluidics, biological transport, and fluid--structure interaction. Let $\Omega$ be a polygonal or polyhedral domain in $\mathbb{R}^d$, $d=2,3$, with Lipschitz boundary $\partial\Omega$. The Stokes equations are given by
\begin{alignat*}{2}
- \mu \Delta \bm{u} +\nabla p&=\bm{f} &&\quad \mbox{in}\; \Omega,\\
	\nabla{\cdot}\bm{u}&=0&&\quad \mbox{in}\; \Omega,\\
	\bm{u}&=\bm{0}&&\quad \mbox{on}\; \partial \Omega ,
\end{alignat*}
where the condition $\int_{\Omega} p\;dx =0$ is imposed to ensure the uniqueness of the pressure and, consequently, the unique solvability of the problem.

Among the many discretisation techniques for the Stokes equations, pressure-robust methods have attracted considerable attention in recent years. A discretisation is said to be pressure-robust if the velocity error is independent of the pressure (or, equivalently, of irrotational forcing terms). Classical mixed finite element methods, although satisfying the discrete inf--sup condition, typically yield velocity error estimates containing a pressure contribution scaled by $\mu^{-1}$. As highlighted in the comprehensive review~\cite{VACMG17}, this pressure dependence can significantly deteriorate the accuracy of the velocity approximation when the pressure is large or exhibits complex behaviour, even for simple benchmark problems such as the no-flow and stationary vortex cases. Pressure-robust methods overcome this limitation by enforcing a stronger form of the incompressibility constraint, either through exactly divergence-free velocity spaces (e.g., Scott--Vogelius elements and $\bm{H}(\mathrm{div})$-conforming methods) or through suitable reconstructions of the test functions. These methods offer several important advantages, including velocity error estimates that are independent of the pressure and improved accuracy for coupled problems, such as natural convection and passive scalar transport, as well as for flows dominated by Coriolis effects or large irrotational forces.

While considerable effort has been devoted to the \textit{a priori} analysis of pressure-robust discretizations, and their advantages are now well understood, we refer to \cite{LinkeMerdon16,AinsworkthParker1,VACMG17,LiuLiuPego2007} for comprehensive discussions. In contrast, \textit{a posteriori} error estimation remains largely unexplored.  A posteriori error estimators play a fundamental role in adaptive mesh refinement strategies and in providing reliable error control for practical simulations. Classical techniques for deriving a posteriori error estimators for the Stokes equations typically involve pressure-dependent contributions. Early contributions focused on residual-based estimators for conforming finite element discretisations \cite{Verfurth89}. These techniques were subsequently extended to nonconforming methods \cite{DariDuranPadra1995} and to more general incompressible flow models, including the Oseen equations \cite{AinsworthOden1997}. Further developments introduced refined residual-based frameworks \cite{CarstensenFunken2001}, equilibrated estimators for nonconforming finite element methods \cite{DorflerAinsworth2005}, and reliable and efficient estimators for mixed pseudostress--velocity formulations \cite{CarstensenKimPark2011}. More recently, guaranteed error bounds using equilibrated stress reconstruction for a broad class of Stokes discretisations was proposed in \cite{Hannukainen12}. A rigorous $hp$-analysis of a residual-type a posteriori error estimator for a hybrid dG method was presented in \cite{EggerWaluga2013}. Despite these substantial advances, when the underlying discretisation is pressure-robust, the resulting estimators generally fail to inherit this property and may depend explicitly on the viscosity parameter $\mu$. In pressure-dominated regimes, such estimators may overestimate the velocity error and fail to accurately identify regions requiring mesh refinement. This limitation highlights the necessity of developing genuinely pressure-robust a posteriori error estimators.

There are a few works that explicitly address pressure-robust a posteriori error estimators for the Stokes equations. In \cite{KS14}, a stream-function formulation was employed to derive a pressure-independent error estimator for $\bm H(\tdiv)$-conforming methods. However, the analysis was restricted to two-dimensional domains, and no local efficiency result was established. More recently, a reliable and efficient a posteriori error estimator for the stream function vorticity formulation of the incompressible Navier--Stokes equations in two dimensions was derived in \cite{CG26}.
A class of pressure-robust conforming finite element methods equipped with velocity reconstruction operators was analysed in \cite{LCJ19}. The resulting estimator was shown to be pressure-robust with respect to the reliability estimate; however, the corresponding local efficiency bound still involved pressure-dependent contributions, and the analysis was again confined to two-dimensional settings. Subsequently, a guaranteed error estimator for the velocity error was developed in \cite{LC22} through the construction of an equilibrated flux based on a mass-conserving mixed stress discretisation \cite{JLJ20}. Both global reliability and local efficiency were established for the resulting estimator. To the best of our knowledge, pressure-robust $hp$-a posteriori error estimators for the Stokes problem have not yet been investigated, even in two dimensions. Consequently, it is of great importance and interest to develop a rigorous pressure-robust $hp$-a posteriori error estimator that features both a provable upper bound and a local lower bound.

In this work, we derive residual $hp$-\emph{a posteriori} error estimates for $\bm{H}(\tdiv)$-conforming dG methods applied to the Stokes problem posed on two- and three-dimensional polytopal Lipschitz domains. To the best of our knowledge, this is the first work to employ a partition of unity combined with local Helmholtz decompositions on vertex patches to control the nonconforming error in the $hp$-setting. A key contribution of this work is the derivation of a novel stability estimate for the local Helmholtz decomposition, together with a carefully designed patchwise Stokes reconstruction that lifts the product of an $\bm{H}(\tdiv)$-conforming piecewise polynomial and a hat function into $\mathbf{H}^1_0(\oma)$ on the vertex patch $\oma$ associated with $\mathbf{a}$ while preserving the divergence exactly. We believe that this result is of independent interest, since the underlying decomposition can be applied to a broad class of nonconforming finite element methods, including HDG methods. Moreover, the use of local Helmholtz decompositions, as opposed to a global decomposition, eliminates any dependence of the resulting estimates on the number of holes in the domain $\Omega$.

The second main contribution of this work is the derivation of two pressure-robust upper bounds for the conforming error. We emphasize that preserving the divergence-free constraint is particularly challenging in the construction of pressure-robust error estimators. To overcome this difficulty, our analysis relies on the generalized Bogovskii operator introduced in~\cite{CostabelMcIntosh2010}. Specifically, for any function $\bm{v}\in \bm{H}^1_0(\Omega)$ satisfying $\nabla{\cdot}\bm{v}=0$, there exists a function $\bm{\zeta}\in \bm{H}^2_0(\Omega)$ such that $\curl{\bm{\zeta}}=\bm{v}$ on Lipschitz domains with simple topology. This seminal result provides the foundation for constructing interpolation operators that preserve the divergence-free condition. The first approach is based on the classical Babu\v{s}ka--Suri interpolation operator~\cite{babuskasurihpversionFEMwithquasiuniformmesh}. The resulting pressure-robust upper bound involves five error indicators together with a data oscillation term. Four of the indicators are $hp$-optimal, while the jump penalty term exhibits a suboptimality of order $p^{1/2}$. The second approach relies on the N\'ed\'elec projection-based interpolation operator~\cite{Melenkhp}, whose key features are its commuting property and orthogonality. The resulting upper bound for the conforming error is suboptimal by at most one order in $p$. Its main advantage is its remarkable simplicity: the estimator involves only two error indicators together with a data oscillation term.

Another contribution of this work is the derivation of a pressure-robust local lower bound, improving upon the existing result in~\cite{JLJ20}. The pressure-robustness of the local lower bound is established by means of a bespoke $C^1$-bubble function inspired by techniques developed for fourth-order problems.
The proposed methodology is not restricted to $\bm{H}(\mathrm{div})$-conforming dG methods and has broader implications for pressure-robust discretisations. In particular, it can be employed to derive both upper and lower error bounds for a wide range of pressure-robust finite element methods, including the Scott--Vogelius element~\cite{Zhang05}, divergence-free elements using rational functions \cite{GuzmanNeilan2014,GuzmanNeilan2014b}, the mass-conserving mixed stress method~\cite{JLJ20}, staggered dG methods~\cite{ZCL20}, and pressure-robust schemes based on velocity reconstruction operators~\cite{LLMJ17,ZPC22}.

The remainder of the paper is organized as follows. In Section~\ref{sec:main}, we introduce the weak formulation of the model problem and recall the main $hp$-approximation tools and interpolation operators employed in the analysis. Section~\ref{sec: H(div)-DG} presents the $\bm{H}(\tdiv)$-conforming dG method. In Section~\ref{sec: Local Helmholtz Decomposition}, we establish a novel stability estimate for local Helmholtz decompositions on vertex patches; the main result is Lemma~\ref{lemma:helmholtz}. Section~\ref{sec:apost} is devoted to the derivation of the residual-based $hp$-\emph{a posteriori} upper and local lower error bounds, whose main results are stated in Theorems~\ref{theorem: a posterioir error bound} and~\ref{theorem: lower bound}. Numerical experiments illustrating the theoretical findings are presented in Section~\ref{sec:Numerical example}. Finally, Section~\ref{sec:proofs} collects several technical proofs used in the $hp$-\emph{a posteriori} error analysis.

\section{Weak form and analysis tools}
\label{sec:main}
In this section, we introduce essential notation at the continuous and discrete levels, formulate the weak problem, and recall some useful results from the literature.

\subsection{Basic notation and weak formulation}
We adopt standard notation for Lebesgue and Sobolev spaces. Let $S \subset \mathbb{R}^d$, $d \in \{2,3\}$, be an open, bounded, Lipschitz set. For scalar-, vector-, or tensor-valued fields, we denote the $L^2$-inner product and norm as $(\bullet,\bullet)_S$ and $\|\bullet\|_S$, respectively.
Standard notation of Sobolev spaces is used in this work. We employ boldface font to denote vector-valued fields, and we use an additional underline notation, e.g., $\underline{\bm{A}}$, for tensor-valued fields. The (weak) gradient of a scalar-valued function $v$ is denoted as $\nabla v$. Let $\bm{n}_S$ denote the unit outward normal vector on the boundary $\partial S$ of~$S$. We consider the Hilbert Sobolev spaces
\begin{align*}
	\bm{V}:&= \bm{H}^1_{0}(\Omega),\quad
	Q:=L^2_0 (\Omega), \quad
	\bm{H}(\tcurl; \Omega):=\{\bm{v}\in \bm{L}^2(\Omega);\nabla {\times} \bm{v} \in \bm{L}^2(\Omega)\},\\
	\bm{H}(\tdiv;\Omega):&=\{\bm{v}\in \bm{L}^2(\Omega);\nabla{\cdot}\bm{v}\in L^2(\Omega)\},
	\bm{H}(\tdiv^0;\Omega):=\{\bm{v}\in \bm{L}^2(\Omega);\nabla{\cdot}\bm{v}=0\}.
\end{align*}
In addition, we follow the convention by setting $\bm{H}_0(\tcurl;\Omega):=\{\bm{v}\in \bm{H}(\tcurl;\Omega); \bm{v}{\times} \bm{n}_{\Omega} = \bm{0}\}$ and $\bm{H}_0(\tdiv;\Omega):=\{\bm{v}\in \bm{H}(\tdiv;\Omega); \bm{v}{\cdot} \bm{n}_{\Omega} = 0\}$. We define
\begin{align*}
	a(\bm{u},\bm{v}):=(\mu \nabla \bm{u}, \nabla \bm{v})_{\Omega},\quad
	b(\bm{v},p):=-(p,\nabla{\cdot} \bm{v})_{\Omega}.
\end{align*}
The weak formulation reads: Find $(\bm{u},p)\in \bm{V}{\times} Q$ such that for all $(\bm{v},q)\in \bm{V} {\times} Q$,
\begin{align*}
	a(\bm{u},\bm{v})+b(\bm{v},p) &=(\bm{f},\bm{v})_{\Omega},\\
    -b(\bm{u},q) &=0.
\end{align*}
We define the bilinear form
\begin{align*}
	B((\bm{u},p);(\bm{v},q)):&=	a(\bm{u},\bm{v})+b(\bm{v},p) -b(\bm{u},q).
\end{align*}
Then, the weak formulation can be recast into the following equivalent form: Find $(\bm{u},p)\in \bm{V}{\times} Q$ such that
\begin{align}
	B((\bm{u},p);(\bm{v},q))= (\bm{f},\bm{v})_{\Omega}, \qquad \forall (\bm{v},q)\in \bm{V}{\times} Q .\label{eq:weak-B}
\end{align}
The well-posedness of the above problem can be found, for example, in \cite{Girault86}. As the solution $\bm{u}\in \bm{H}^1_{0}(\Omega) \cap \bm{H}(\tdiv^0;\Omega)$, we can eliminate pressure and reformulate \eqref{eq:weak-B} as follows:
\begin{align}
	a(\bm{u},\bm{v})= (\bm{f},\bm{v})_{\Omega}, \qquad \forall \bm{v}\in \bm{H}^1_{0}(\Omega) \cap \bm{H}(\tdiv^0;\Omega).\label{eq:weak-div-free}
\end{align}

\subsection{ Mesh and discrete setting}

Let $\mesh$ denote a conforming simplicial mesh of the domain $\Omega$. A generic mesh cell is denoted by $K \in \mesh$, and we let $h_K$ be the diameter of the element $K$. For any generic mesh vertex $\ba$ in the set of mesh vertices $\vertice$, we denote $\oma$ as its patch, i.e., the collection of cells sharing this vertex. In addition, we denote $\mesha$ as the subset of $\mesh$ such that for each cell $K\in\mesha$, $\ba\in \Bar{K}$. The mesh faces are collected in the set $\Fall$. We denote by $\Fint$ the set of all interior faces and by $\Fb$ the set of all boundary faces. Hence, $\Fall = \Fint \cup \Fb$. Next, for each face $F$, we use $h_F$ to denote the diameter of $F$, and $\n_F$ to represent the unit normal vector of $F$
pointing from $K_1$ to $K_2$, where $K_1$ and $K_2$ are the cells sharing the common face $F$.
When there is no confusion, we use $\n$ to simplify the notation.
For all $s>\frac12$, we define the broken Sobolev spaces $H^s(\mesh;\mathbb{R}^d):= \{w\in L^2(\Omega;\mathbb{R}^d)\:|\:w_K:=w|_K\in H^s(K;\mathbb{R}^d), ~ \forall K\in \mesh\}$, $d\in\{2,3\}$. 
For each interior face $F\in \Fint$, we define the jump and average operators for function $v\in H^s(\mesh;\mathbb{R}^d)$ over $F=\partial K_1 \cap \partial K_2$ as
\begin{align*}
	\jump{v}_{F}:=v_{K_1}|_F -v_{K_2}|_F\quad \mbox{and}\quad \{v\}_{F} :=\frac{v_{K_1}|_F+v_{K_2}|_F}{2},
\end{align*}
and $\bm{n}_F$ points from $K_1$ to $K_2$. For the boundary faces $F=  \partial K \cap \partial \Omega$, we simply define $\jump{v}_{F}:=v_{K_1}|_F$ and $\{v\}_{F}:=v_{K_1}|_F$.  Finally, we define the broken gradient $\nabla_h$, broken Laplacian $\Delta_h$, and broken curl $\nabla_h{\times}$ as the cellwise gradient, Laplacian, and curl operators acting on $H^{1}(\mesh;\mathbb{R}^q)$, $H^{2}(\mesh;\mathbb{R}^q)$, and $\bm{H}(\tcurl;\mesh)$, respectively.

Let $p\geq 0$ denote the polynomial degree, we use $\mathbb{P}_p(K)$ and $\mathbb{P}_p(F)$ to represent the polynomial functions defined on $K$ and $F$ whose order is less than or equal to $p$.  We define the $L^2$-orthogonal projection $\Pi^p_K$ onto $\mathbb{P}_p(K)$. At some occasions, we also consider $p\le -1$, in which case $\mathbb{P}_{p}(K):=\{0\}$ and $\Pi^{p}_K$ identically maps to the zero function.  Next,  $\bm{P}_p$ and $\bm{\underline{P}}_p$ represent the space composed of vector- and tensor-valued polynomials of total degree at most $p$, respectively. We consider the following vector-valued broken polynomial spaces:
\begin{align*}
	\bm{P}_p(\mesh):&=\{\bm{v}_h\in \bm{L}^2(\Omega); \bm{v}_h|_{K} \in [\mathbb{P}_p(K)]^d, \forall K\in \mesh\},\\
	\bm{RT}_p(\mesh):&=\{\bm{v}_h\in \bm{L}^2(\Omega); \bm{v}_h|_{K} \in [\mathbb{P}_p(K)]^d + \bm{x} \tilde{\mathbb{P}}_p(K), \forall K\in \mesh\},\\
  \ \bm{N}_{p}(\mathcal{T}_h):&=\{\bm{v}_h\in \bm{L}^2(\Omega);\bm{v}_h|_K\in [\mathbb{P}_p(K)]^d + \bm{x} {\times} \tilde{\mathbb{P}}_p(K), \forall K\in \mesh \},
\end{align*}
where $\tilde{\mathbb{P}}_p(K)$ denotes homogeneous polynomials of degree $p$. Similarly, tensor-valued broken polynomial space is defined as follow:
\begin{align*}
	\underline{\bm{P}}_p(\mesh):=\{\underline{\bm{w}}_h\in \underline{\bm{L}}^2(\Omega), {\underline{\bm{w}}_h}_{|K}\in \underline{P}_p(K),K\in \mesh\}.
\end{align*}

\subsection{Analysis tools}\label{Analysis tools}
We briefly review the main $hp$-analysis tools used in this work.
We use the symbol $C$ to denote any positive generic constant whose value can change at each occurrence as long as it is independent of the mesh size $h$ and the  polynomial degree $p$. The value of $C$ can  depend on the mesh shape-regularity and the space dimension $d$. In the case that the generic constant depends on the domain $\Omega$ or polynomial degree, we denote it by $C(\Omega)$ and $C(p)$, respectively.

\begin{lemma}[$hp$-discrete trace inequality]\label{sharp_inverse}
	Let $v \in \mathbb{P}_p({K})$ with $p\geq0$, for all $K \in \mesh$. Then, the following estimate holds:
	\begin{subequations}
		\begin{align}\label{eq: discrete trace}
			\|v\|_{\dK} \le C \bigg(\dfrac{(p+1)^2}{h_K}\bigg)^{\frac{1}{2}}\|v\|_{K}.
		\end{align}
	\end{subequations}
\end{lemma}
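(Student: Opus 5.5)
The plan is to reduce to a reference simplex and reduce further to one face at a time. Fix $K\in\mesh$ with affine map $F_K:\hat K\to K$, and set $\hat v=v\circ F_K\in\mathbb{P}_p(\hat K)$. Shape regularity gives $|F|\simeq h_K^{d-1}$ for each face $F\subset\dK$ and $|K|\simeq h_K^d$. Changing variables then yields
\begin{align*}
\|v\|_{\dK}^2\le C h_K^{d-1}\|\hat v\|_{\partial\hat K}^2,\qquad \|\hat v\|_{\hat K}^2\le C h_K^{-d}\|v\|_K^2 .
\end{align*}
So it suffices to prove $\|\hat v\|_{\hat F}^2\le C(p+1)^2\|\hat v\|_{\hat K}^2$ for each of the $d+1$ faces $\hat F$ of $\hat K$, with $C$ independent of $p$. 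Combining this with the two scalings gives the factor $(p+1)^2/h_K$.

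The core is the sharp reference estimate on a single face. My first choice is Warburton--Hesthaven type exact computation: the optimal constant for $\sup_{\hat v\in\mathbb{P}_p}\|\hat v\|_{\hat F}^2/\|\hat v\|_{\hat K}^2$ on a simplex equals $(p+1)(p+d)/d\cdot|\hat F|/|\hat K|$. This follows from orthogonal (Dubiner/Jacobi) bases in collapsed coordinates, where the face is the collapsed-coordinate endpoint.

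To keep the argument self-contained, I would use a more elementary route instead. By symmetry of the simplex, take $\hat F=\{x_d=0\}$. Write $\hat K$ as a union of segments in the $x_d$ direction, $\{(x',t):0\le t\le 1-|x'|_1\}$ for $x'\in\hat F$. On each segment $t\mapsto\hat v(x',t)$ is a univariate polynomial of degree $\le p$ on an interval of length $\ell(x')=1-|x'|_1$. The one-dimensional sharp inequality $|q(0)|^2\le (p+1)^2\ell^{-1}\|q\|_{L^2(0,\ell)}^2$, obtained from Legendre expansion, then has to be integrated over $x'$.

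The main obstacle is that $\ell(x')\to0$ near $\partial\hat F$, so the factor $\ell^{-1}$ blows up. To handle this, I would apply the 1D bound not on the full segment but on a Jacobi-weighted version, $|q(0)|^2\le C(p+1)^2\ell^{-1}\int_0^\ell (1-t/\ell)^{\alpha}|q|^2$ with $\alpha=d-1$. Alternatively, I would avoid degenerate segments entirely by using segments emanating from the vertex opposite $\hat F$, i.e.\ polar/collapsed coordinates $x=s\,y+(1-s)\hat{\mathbf{v}}$ with $y\in\hat F$ and $s\in[0,1]$. Then $\|\hat v\|_{\hat K}^2=c\int_{\hat F}\int_0^1 |\hat v|^2 s^{d-1}\,ds\,dy$, and the face value is at $s=1$. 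For each fixed $y$, $g(s)=\hat v(sy+(1-s)\hat{\mathbf v})$ is a polynomial of degree $\le p$ in $s$. The 1D Jacobi-weight inequality $|g(1)|^2\le C(p+1)^2\int_0^1|g|^2 s^{d-1}\,ds$ holds, since the endpoint $s=1$ is away from the weight's zero and the Christoffel function of the weight $s^{d-1}$ at $s=1$ is $\sim(p+1)^{-2}$ (Jacobi polynomial endpoint values $P_n^{(0,d-1)}(1)=1$ with norms $\sim 1/n$). Integrating over $y\in\hat F$ finishes the reference estimate.

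Finally, sum over the $d+1$ faces and scale back.
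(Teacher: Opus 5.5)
Your proposal is correct. Your ``first choice'' coincides with the paper, which gives no argument of its own and simply cites Warburton--Hesthaven \cite{Warburton03}; that reference obtains the sharp constant from orthogonal (Dubiner-type) polynomial bases on the simplex.

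Your self-contained vertex-ray route is a genuinely more elementary alternative, and it is also sharp. With $x=s\,y+(1-s)\hat{\mathbf v}$ one has $dx=h_\perp s^{d-1}\,ds\,dy$, where $h_\perp=d|\hat K|/|\hat F|$ is the height of $\hat K$ over $\hat F$. For the Jacobi weight you have $P_n^{(0,d-1)}(1)=1$ and $\int_{-1}^1 (P_n^{(0,d-1)})^2(1+x)^{d-1}\,dx=2^d/(2n+d)$. The Christoffel bound therefore reads $|g(1)|^2\le (p+1)(p+d)\int_0^1|g|^2 s^{d-1}\,ds$. Integrating over $y\in\hat F$ gives $\|\hat v\|_{\hat F}^2\le \frac{(p+1)(p+d)}{d}\frac{|\hat F|}{|\hat K|}\|\hat v\|_{\hat K}^2$. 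This is exactly the Warburton--Hesthaven constant, obtained with only one-dimensional Jacobi polynomials instead of the full simplex orthogonal basis.

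Because the argument is affine invariant, you can also skip the pull-back to $\hat K$ and run it directly on $K$. Shape regularity then enters only through $|F|/|K|\le C h_K^{-1}$. The citation buys the paper brevity; your route buys a transparent, self-contained proof with the same constant. You also do not need ``by symmetry'', since the vertex-ray argument works for any face of any simplex.

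One caveat: discard the first ``fix'' you suggest for the perpendicular-segment approach. Weighting the one-dimensional bound by $(1-t/\ell)^{d-1}$ only shrinks the right-hand side. It does nothing about the factor $\ell(x')^{-1}$, which still blows up as $x'\to\partial\hat F$. The reason is that the Cartesian volume element $dt\,dx'$ carries no compensating factor of $\ell$. The vertex-ray parametrization succeeds precisely because its degeneracy, the vanishing Jacobian $s^{d-1}$, sits at the apex $s=0$, away from the face $s=1$.
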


\begin{proof}
	The proof of~\eqref{eq: discrete trace} can be found in \cite{Warburton03}.
\end{proof}

\begin{lemma}[$hp$-inverse estimate]\label{lemma: Inverse inequality}
	The following holds for all $v\in \mathbb{P}_p(K)$, for all $K\in\mesh$, and all $p\ge0$,
	\begin{align}
		\|\nabla v\|_{K}\le C \frac{p^2}{h_K} \|v\|_{K}.\label{eq:inverse}
	\end{align}
\end{lemma}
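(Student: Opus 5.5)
The plan is the standard route for $hp$-inverse estimates: reduce to a reference simplex by an affine map, and on the reference element prove the $p^2$ Markov-type bound by tensorizing one-dimensional Markov inequalities. Fix $K\in\mesh$ and let $F_K:\hat K\to K$, $F_K(\hat x)=B_K\hat x+b_K$, be the affine map from the reference simplex $\hat K$. Set $\hat v=v\circ F_K\in\mathbb{P}_p(\hat K)$. Then $\nabla v=B_K^{-T}\hat\nabla\hat v\circ F_K^{-1}$, and the Jacobian determinant cancels between the two sides. Hence
\[
\|\nabla v\|_K\le \|B_K^{-1}\|\,\|\hat\nabla\hat v\|_{\hat K}\,\|\hat v\|_{\hat K}^{-1}\,\|v\|_K .
\]
Shape-regularity gives $\|B_K^{-1}\|\le C/\rho_K\le C/h_K$, so it remains to prove $\|\hat\nabla\hat v\|_{\hat K}\le Cp^2\|\hat v\|_{\hat K}$ for all $\hat v\in\mathbb{P}_p(\hat K)$, with $C$ depending only on $d$.

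On the reference element I would first prove the bound for the square or cube $\hat Q=(-1,1)^d$. In one dimension, the $L^2$ Markov inequality on $(-1,1)$,
\[
\|q'\|_{L^2(-1,1)}\le C p^2\|q\|_{L^2(-1,1)}\quad\text{for } q\in\mathbb{P}_p(-1,1),
\]
follows from expanding $q$ in Legendre polynomials. One writes $L_n'=\sum_{k<n,\,n-k\text{ odd}}(2k+1)L_k$ and applies Cauchy--Schwarz to the coefficient sums; this gives a constant of order $p^2$. For $\hat Q$, note that $\hat v\in\mathbb{P}_p(\hat K)$ is a polynomial of degree at most $p$ in each variable separately. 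Applying the one-dimensional bound on each line parallel to a coordinate axis and integrating over the remaining variables (Fubini) gives $\|\partial_i\hat v\|_{\hat Q}\le Cp^2\|\hat v\|_{\hat Q}$.

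The main obstacle is transferring this to the simplex. The obvious idea is to embed $\hat K\subset\hat Q$, but that requires controlling $\|\hat v\|_{\hat Q}$ by $\|\hat v\|_{\hat K}$, and this fails uniformly in $p$ (the constant grows exponentially). I would therefore use a degenerate (Duffy) collapsed-coordinate map from $\hat Q$ onto $\hat K$. In 2D, with $x=(1+\xi)(1-\eta)/4$ and $y=(1+\eta)/2$, a polynomial of total degree $p$ becomes a polynomial of degree $p$ in each variable. The integrals then carry Jacobi weights $(1-\eta)^{d-1}$, and derivatives pick up a factor $(1-\eta)^{-1}$. This requires weighted one-dimensional Markov inequalities, for which the Jacobi-weighted version $\|q'\|_{w}\le Cp^2\|q\|_{w}$ holds (Schmidt-type inequalities). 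The $(1-\eta)^{-1}$ singularity must then be absorbed using the polynomial structure.

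Alternatively, and more robustly, I would cover $\hat K$ by $d+1$ affine images of a parallelepiped (or cube) with vertices in $\hat K$, placing one at each vertex so that their union is all of $\hat K$. On each such image the tensor argument applies directly, because a polynomial of total degree $p$ stays of degree at most $p$ in each affine coordinate, and both the gradient and the $L^2$ norm on the image are controlled by those on $\hat K$. Summing over the $d+1$ pieces gives $\|\hat\nabla\hat v\|_{\hat K}^2\le Cp^4\|\hat v\|_{\hat K}^2$. Combining this with the scaling argument yields \eqref{eq:inverse}. In the paper one could equally cite Schwab's $p$- and $hp$-FEM monograph for this reference-element estimate.
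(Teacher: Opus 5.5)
The paper gives no argument of its own here; it only cites Schwab's monograph (Theorem~4.76). Your self-contained route (affine scaling to $\hat K$, then a reference Markov bound from the tensorized one-dimensional Legendre estimate) is standard, and most of it is sound. The scaling step is correct. Your Cauchy--Schwarz computation, carried out, gives $\|q'\|_{L^2(-1,1)}\le p(p+1)\|q\|_{L^2(-1,1)}$. The Fubini argument on the cube only needs degree at most $p$ in each variable separately. The collapsed-coordinate paragraph, however, leaves exactly the hard step unproved (the weighted Markov inequality and the removal of the factor $(1-\eta)^{-1}$). So the proof rests on the covering argument, and there the key geometric claim is false for $d=3$.

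For $d=2$ the claim is true. In barycentric coordinates, the parallelograms $P_i=\{x\in\hat K:\ \lambda_j(x)\le\tfrac12\ \forall j\ne i\}$ lie in $\hat K$ and cover it, since at most one $\lambda_j$ can exceed $\tfrac12$. For a tetrahedron, however, no four parallelepipeds contained in $\hat K$ can cover it. The argument runs as follows.
\begin{itemize}
\item A nondegenerate parallelepiped in $\hat K$ cannot contain two vertices $a_i,a_k$. Otherwise $[a_i,a_k]$ would be one of its edges, and the two remaining edge directions would have to point into $\hat K$ from both $a_i$ and $a_k$, which forces them to vanish.
\item Hence each vertex $a_i$ needs its own piece. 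That piece must fill a neighbourhood of $a_i$, since the other closed pieces stay away from $a_i$. So its edges run along those of $\hat K$, and $P_i=\{x\in\hat K:\ \lambda_j(x)\le a_{ij}\ \forall j\ne i\}$ with $\sum_{j\ne i}a_{ij}\le 1$.
\item Such a $P_i$ meets an edge $[a_k,a_l]$ with $i\notin\{k,l\}$ in at most one point. Covering the six edges therefore forces $a_{kl}+a_{lk}\ge 1$ for every pair, i.e.\ $\sum_{i\ne j}a_{ij}\ge 6$, whereas $\sum_{i\ne j}a_{ij}\le 4$.
\end{itemize}
With the natural choice $a_{ij}=\tfrac13$, for instance, the point with barycentric coordinates $(\tfrac25,\tfrac25,\tfrac1{10},\tfrac1{10})$ is missed.

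The repair is routine and keeps your strategy. Keep the four vertex boxes $\{\lambda_j\le\tfrac13,\ j\ne i\}$. They miss exactly the points where two coordinates $\lambda_k,\lambda_l$ exceed $\tfrac13$, and there $\lambda_l<\tfrac23$ and $\lambda_m+\lambda_n<\tfrac13$. For each edge $[a_k,a_l]$, add the three sets $\{\lambda_l\in[\tfrac13,\tfrac23],\ (\lambda_m,\lambda_n)\in\Pi_r\}$, where $\Pi_1,\Pi_2,\Pi_3$ is the three-parallelogram cover of the triangle $\{\lambda_m,\lambda_n\ge0,\ \lambda_m+\lambda_n\le\tfrac13\}$. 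These are parallelepipeds in the affine coordinates $(\lambda_l,\lambda_m,\lambda_n)$. They lie in $\hat K$ because $\lambda_k\ge 1-\tfrac23-\tfrac13=0$, and together they cover what the vertex boxes miss. This gives $22$ pieces, a fixed number.

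Alternatively, use the standard split of $\hat K$ into $d+1$ hexahedra through the barycentre, the face barycentres and the edge midpoints. These are multilinear rather than affine images of the cube. The argument still goes through, because multilinear pullbacks of $\mathbb{P}_p$ lie in $\mathbb{Q}_p$ and the Jacobians of this split are bounded away from zero. Either way the constant depends only on $d$.
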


\begin{proof}
	A proof can be found in  \cite[Theorem~4.76]{Schwab98}.
\end{proof}

\begin{lemma}[Local $L^2$-orthogonal projection] The following holds for all $v\in H^1(K)$, $K\in \mathcal{T}_h$ and $p\geq 0$,
	\begin{align}
		\|v-\Pi_K^p(v)\|_{\partial K}\le C \left(\frac{h_K}{p+1}\right)^{\frac{1}{2}}\|\nabla v\|_{K},\label{eq:L2projection-trace}
	\end{align}
and
	\begin{align}
		\|\nabla \Pi_K^p(v)\|_{K}\le C p^{\frac12}\|\nabla v\|_{K}.\label{eq:L2projection-stability}
	\end{align}
\end{lemma}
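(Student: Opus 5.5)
Both estimates reduce, by scaling, to the reference simplex $\hat K$, so the plan is to prove them there and transport back. Under the affine map $F_K:\hat K\to K$, the projection $\Pi_K^p$ corresponds to $\hat\Pi^p$ on $\hat K$, because polynomial spaces are affine-invariant and the $L^2$ inner product only picks up the constant Jacobian factor. Norms scale as $\|v\|_{\partial K}\sim h_K^{(d-1)/2}\|\hat v\|_{\partial\hat K}$ and $\|\nabla v\|_K\sim h_K^{d/2-1}\|\hat\nabla\hat v\|_{\hat K}$, with constants depending only on shape-regularity. These factors produce exactly the $h_K^{1/2}$ in \eqref{eq:L2projection-trace} and make \eqref{eq:L2projection-stability} $h$-independent. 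It therefore suffices to prove $\|\hat v-\hat\Pi^p\hat v\|_{\partial\hat K}\le C(p+1)^{-1/2}\|\hat\nabla\hat v\|_{\hat K}$ and $\|\hat\nabla\hat\Pi^p\hat v\|_{\hat K}\le Cp^{1/2}\|\hat\nabla\hat v\|_{\hat K}$ for $\hat v\in H^1(\hat K)$.

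For the trace bound, the case $p=0$ follows from the multiplicative trace inequality $\|w\|_{\partial\hat K}^2\le C(\|w\|_{\hat K}\|\nabla w\|_{\hat K}+\|w\|_{\hat K}^2)$ together with Poincaré for mean-free $w$. For general $p$, the sharp result is the $hp$ $L^2$-projection trace estimate on simplices. In 1D it comes from Legendre expansions, and on simplices from Jacobi polynomial/collapsed-coordinate expansions; see Chernov, "Optimal convergence estimates for the trace of the polynomial $L^2$-projection operator on a simplex" (Math. Comp. 2012), which gives $\|w-\hat\Pi^pw\|_{\partial\hat K}\le C(p+1)^{-1/2}|w|_{H^1(\hat K)}$. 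I would cite that result directly. If one wants a self-contained argument instead, the route is to apply the multiplicative trace inequality to $e=\hat v-\hat\Pi^p\hat v$, giving $\|e\|_{\partial\hat K}^2\le C(\|e\|\,\|\nabla e\|+\|e\|^2)$, and then use two facts: $\|e\|\le C(p+1)^{-1}\|\nabla\hat v\|$ (the $hp$ approximation property of the $L^2$ projection), and $\|\nabla e\|\le C\|\nabla \hat v\|$. The second fact is precisely the statement one cannot get directly, because $\Pi^p$ is \emph{not} $H^1$-stable uniformly in $p$; this is why \eqref{eq:L2projection-stability} carries the factor $p^{1/2}$. The naive route therefore only yields $\|e\|_{\partial\hat K}\le C\|\nabla\hat v\|$ up to a $p^{1/4}$ loss. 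This sharp $(p+1)^{-1/2}$ trace bound is the main obstacle, and I would rely on Chernov's spectral analysis for it.

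For \eqref{eq:L2projection-stability}, introduce a stable $H^1$ polynomial approximant $\mathcal I_p\hat v\in\mathbb P_p(\hat K)$ with $\|\nabla\mathcal I_p\hat v\|\le C\|\nabla\hat v\|$ and $\|\hat v-\mathcal I_p\hat v\|\le C(p+1)^{-1}\|\nabla\hat v\|$; for example, a Babuška--Suri/Muñoz-Sola type operator. Since $\hat\Pi^p$ reproduces polynomials,
\begin{align*}
\nabla\hat\Pi^p\hat v=\nabla\mathcal I_p\hat v+\nabla\hat\Pi^p(\hat v-\mathcal I_p\hat v).
\end{align*}
Applying the inverse estimate of Lemma~\ref{lemma: Inverse inequality} to the second term gives only $Cp^2\cdot p^{-1}=Cp$, which is too crude. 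To reach $p^{1/2}$, I would again use the known sharp bound $\|\nabla\hat\Pi^pw\|\le Cp^{1/2}\|\nabla w\|$ on the simplex, which is proved with the same Jacobi-expansion machinery (Chernov 2012; also Houston--Schwab--Süli for tensor elements). I would cite it, noting that the exponent $1/2$ is sharp. Finally, I would scale back to $K$ to conclude both estimates.
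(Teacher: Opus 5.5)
For \eqref{eq:L2projection-trace} your proposal matches the paper. The paper also quotes this bound from \cite[Theorem~4.1]{Chernov12}, and your scaling reduction and separate $p=0$ check are correct. For \eqref{eq:L2projection-stability} the paper likewise only cites a reference, but it uses \cite[Appendix~C]{CarGraTra24}, not Chernov or Houston--Schwab--S\"uli. Houston--Schwab--S\"uli treats tensor-product cells, so your attribution for simplices should be checked. Your splitting through $\mathcal{I}_p$ does no work: to bound $\nabla\hat\Pi^p(\hat v-\mathcal{I}_p\hat v)$ you invoke the very $p^{1/2}$ bound you are trying to prove.

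No Jacobi-polynomial machinery is needed for the second estimate once the first one is available. Take $p\ge1$; for $p=0$ both sides vanish. Set $q:=\Pi_K^p v$. Since $\Delta q\in\mathbb{P}_{p-2}(K)$ is $L^2(K)$-orthogonal to $q-v$, Green's formula gives
\begin{align*}
\|\nabla q\|_K^2 = (\nabla v,\nabla q)_K + (\nabla(q-v),\nabla q)_K = (\nabla v,\nabla q)_K + (q-v,\nabla q{\cdot}\bm{n}_K)_{\partial K}.
\end{align*}
Bound the boundary term using \eqref{eq:L2projection-trace} and the discrete trace inequality \eqref{eq: discrete trace} applied to $\nabla q\in[\mathbb{P}_{p-1}(K)]^d$. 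This yields
\begin{align*}
\|\nabla q\|_K^2 \le \|\nabla v\|_K\|\nabla q\|_K + C\Big(\frac{h_K}{p+1}\Big)^{\frac12}\Big(\frac{p^2}{h_K}\Big)^{\frac12}\|\nabla v\|_K\|\nabla q\|_K,
\end{align*}
hence $\|\nabla q\|_K\le Cp^{1/2}\|\nabla v\|_K$. This argument has three benefits. It shows that the $p^{1/2}$ loss is exactly the $p^{-1/2}$ trace rate times the factor $p$ from the discrete trace inequality. It needs no separate scaling step, since the $h_K$ factors cancel. And it reduces the whole lemma to a single spectral input, Chernov's trace estimate, so no separate source for the $H^1$ bound is needed.
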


\begin{proof}
	A proof of \eqref{eq:L2projection-trace} can be found in \cite[Theorem~4.1]{Chernov12}, whereas the proof of \eqref{eq:L2projection-stability} can be found in \cite[Appendix C]{CarGraTra24}.
\end{proof}


\begin{lemma}[Local modified-Karkulik--Melenk operator]\label{lemma: Local hp-KM}
	There exists a constant $C$ that only depends on the mesh shape-regularity such that, for all $p\geq 1$, there exists an operator $\mKM: H^1(\oma)\rightarrow \mathbb{P}_p(\mesha)\cap H^1(\oma)$,  such that for all $v\in H^1(\oma)$, and all $K\in\mesha$,
	\begin{align}
		\bigg(\frac{p}{h_K}\bigg)^2	\|v-\mKM (v)\|_{K}^2+\frac{p}{h_K}\|v - \mKM (v)\|_{\partial K}^2+\|\nabla \mKM (v)\|_{K}^2 \leq C \|\nabla v\|_{\oma}^2.  \label{eq: hp-approximation}
	\end{align}
	\begin{proof}
		See \cite{Melenk05,KarMel2015} for the original proof. (see also \cite[Corollary 2.5]{dong:hal-04720237}  for using the $H^1$-seminorm on the right-hand side).
	\end{proof}
\end{lemma}

\begin{lemma}[Modified-global Babu\v{s}ka-Suri-operator]\label{lemma: Global BS}
	There exists a constant $C$ such that, for all $p\geq 1$, there exists an operator $\mBS^p :H^1_{0}(\Omega)\cap H^2(\mesh)\rightarrow \mathbb{P}_p(\mesh)\cap H^1_{0}(\Omega)$,  such that for all $v\in H^1_{0}(\Omega)\cap H^2(\mesh)$, and all $K\in\mesh$,
	\begin{align}
		\bigg(\frac{p}{h_K}\bigg)^4	\|v-\mBS^p (v)\|_{K}^2+\bigg(\frac{p}{h_K}\bigg)^{\frac32}\|v-\mBS^p (v)\|_{\partial K}^2 \nonumber\\
        +\bigg(\frac{p}{h_K}\bigg)^{\frac12}\|\nabla(v-\mBS^p (v))\|_{\partial K}^2 +		\|\nabla^2 \mBS^p (v)\|_{K}^2 \leq C \|\nabla^2 v\|_{K}^2.  \label{eq: hp-approximation BS}
	\end{align}
	\begin{proof}
		The original proof of  Babu\v{s}ka-Suri-operator can be found in \cite[Lemma~4.5]{BabuSurioptimalconvergenceestimatepmethods} (see also \cite[Corollary 2.4]{dong:hal-05498158}   for using the $H^2$-seminorm on the right-hand side).
	\end{proof}

\end{lemma}

\begin{lemma}[Global Raviart--Thomas projection-based interpolation]\label{lemma: Global RT}
There exists an operator
$
\bm{I}_{\rm RT}^{p}: \bm{H}^1_0(\Omega)
\rightarrow
\bm{RT}_{p}(\mathcal{T}_h)\cap \bm{H}_0(\tdiv;\Omega),
$
with $p\geq 1$, such that for any
$\bm{v}\in \bm{H}^1_0(\Omega)\cap \bm{H}(\tdiv^0;\Omega)$ and for all $K\in\mesh$, we have
\begin{align}
\|	\bm{v}-\bm{I}_{\rm RT}^{p} (\bm{v})\|_{K}
\leq
C
\left(\frac{h_{K}}{p}\right)
| \bm{v}|_{\bm{H}^1(K)}.
\label{eq:RT-hp}
\end{align}
\end{lemma}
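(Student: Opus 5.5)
Since the estimate in Lemma~\ref{lemma: Global RT} is element-local, I will build $\bm{I}_{\rm RT}^{p}$ as the standard $hp$ projection-based Raviart--Thomas interpolant in the sense of Demkowicz and Melenk, and transfer its properties from the reference simplex by Piola maps.

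On the reference element $\hat K$, the degrees of freedom are of two kinds. On each face $\hat F$, the normal trace is $L^2(\hat F)$-projected onto $\mathbb{P}_p(\hat F)$; because $\bm{v}\in \bm{H}^1$, this trace lies in $L^2(\hat F)$, so the projection is well defined. In the interior, we match the divergence, i.e.\ $\nabla{\cdot}\hat{\bm I}\hat{\bm v}$ equals the $L^2$-projection of $\nabla{\cdot}\hat{\bm v}$ onto $\mathbb{P}_p(\hat K)$, and we add a constrained $L^2$-minimisation on the divergence-free bubble part. This is the same as the moments against $\mathbb{P}_{p-1}(\hat K)^d$ for the classical RT element. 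Normal traces on each face are then determined by face data alone, so the global operator is $\bm{H}(\tdiv)$-conforming. It also maps into $\bm{H}_0(\tdiv;\Omega)$, because boundary normal traces of $\bm v$ vanish and so do their projections.

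The key step is the reference-element $p$-estimate
\[
\|\hat{\bm v}-\hat{\bm I}^p\hat{\bm v}\|_{\hat K}\le C p^{-1}\,|\hat{\bm v}|_{\bm H^1(\hat K)},
\]
valid for $\hat{\bm v}\in \bm H^1(\hat K)$ with $\nabla{\cdot}\hat{\bm v}=0$. To get it, I will use the commuting diagram property: divergence-free input gives a divergence-free output. I will then write $\hat{\bm v}=\nabla{\times}\hat{\bm\psi}$ (or $\tcurl\,\hat\psi$ in 2D), with $\hat{\bm\psi}\in \bm H^2(\hat K)$ from a regular potential on the convex reference element. From there I will invoke the $p$-optimal projection-based Nédélec/$H^1$ interpolation estimates of Melenk and Rojik~\cite{Melenkhp}: the RT interpolant of a curl equals the curl of the Nédélec interpolant, which yields one power of $p^{-1}$ in $L^2$ relative to the $H^1$ seminorm. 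Polynomial invariance $\hat{\bm I}^p\hat{\bm q}=\hat{\bm q}$ for $\hat{\bm q}\in\bm{RT}_p$, in particular for constants, lets me replace the full $H^1$ norm by the seminorm through a Bramble--Hilbert argument, since subtracting a constant keeps the divergence zero.

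Scaling to $K$ follows from the contravariant Piola transform $\bm v=|J|^{-1}J\hat{\bm v}\circ F_K^{-1}$. It preserves normal traces, divergence moments and the divergence-free property, and it commutes with the interpolant. Under shape regularity, $\|\bm v-\bm I\bm v\|_K\simeq h_K^{1-d/2}\|\hat{\bm v}-\hat{\bm I}\hat{\bm v}\|_{\hat K}$ and $|\hat{\bm v}|_{\bm H^1(\hat K)}\lesssim h_K^{d/2-1}\,h_K\,|\bm v|_{\bm H^1(K)}$. Combining these gives the factor $h_K/p$, which is \eqref{eq:RT-hp}.

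The main obstacle is obtaining the sharp $p^{-1}$ rate with only $H^1$ regularity, since the face DOFs only see an $H^{1/2}$ trace. I expect to rely on the cited $hp$ projection-based interpolation theory, which handles exactly this via $\tilde H^{-1/2}$-type face projections, rather than reprove it. Restricting to divergence-free $\bm v$ is what allows a loss-free reduction to the curl/Nédélec estimate.
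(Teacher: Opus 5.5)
Your overall strategy is the paper's: take the Melenk--Rojik projection-based operator $\Pi^{\mathrm{div}}_p$ elementwise via the contravariant Piola map, apply their $L^2$ estimate \cite[Theorem~2.10(vi)]{Melenkhp}, use $\nabla{\cdot}\bm v=0$ to remove the divergence contribution, and scale. Your Bramble--Hilbert step and your Piola scaling are correct: $\|\bm v-\bm I\bm v\|_K\simeq h_K^{1-d/2}\|\hat{\bm v}-\hat{\bm I}\hat{\bm v}\|_{\hat K}$ and $|\hat{\bm v}|_{\bm H^1(\hat K)}\lesssim h_K^{d/2}|\bm v|_{\bm H^1(K)}$. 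These steps make explicit what the paper leaves implicit.

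However, as written, your key step is proved for a different operator than the one you construct. You define $\hat{\bm I}^p$ by $L^2(\hat F)$-projections of normal traces, divergence matching, and $L^2$-orthogonality to divergence-free bubbles. You then identify this with the classical moment interpolant. That identification is wrong in general. Given the face conditions, the two sets of interior conditions agree only on the gradient part $\nabla\mathbb{P}_p(\hat K)$. They differ as soon as nontrivial divergence-free bubbles exist, e.g.\ for $p\ge 2$.

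More importantly, neither operator is Melenk--Rojik's $\Pi^{\mathrm{div}}_p$. Its face conditions use the $\widetilde H^{-1/2}$-type projections you mention yourself at the end. Two things therefore fail for your $\hat{\bm I}^p$:
\begin{itemize}
\item the cited $p^{-1}$ bound does not apply to it;
\item the commuting relation $\Pi^{\mathrm{div}}_p(\nabla{\times}\hat{\bm\psi})=\nabla{\times}\Pi^{\mathrm{curl}}_p\hat{\bm\psi}$ is a property of the Melenk--Rojik pair, not of your operator.
\end{itemize}
The fix is to define $\bm I^p_{\rm RT}$ as the Piola pull-back of $\Pi^{\mathrm{div}}_p$ itself. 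If you want to keep your $L^2$-based operator, you need an independent argument, which your outline does not contain. One possible ingredient set is: its $L^2$-best-approximation property among divergence-free $\bm{RT}_p$ fields with prescribed normal traces, a $p^{-1}$ bound for the face $L^2$-projection in the dual of $H^{1/2}(F)$, and $p$-robust polynomial liftings.

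Once the correct operator is used, the vector-potential detour is not a genuine reduction. Writing $\hat{\bm v}=\nabla{\times}\hat{\bm\psi}$ gives $\hat{\bm v}-\Pi^{\mathrm{div}}_p\hat{\bm v}=\nabla{\times}(\hat{\bm\psi}-\Pi^{\mathrm{curl}}_p\hat{\bm\psi})$. But the N\'ed\'elec $L^2$ estimate (the one quoted in Lemma~\ref{lemma: Global Nedelec}) controls $\hat{\bm\psi}-\Pi^{\mathrm{curl}}_p\hat{\bm\psi}$, not its curl. The curl-error bound you actually need is, by the commuting diagram, exactly the $\bm{RT}$ estimate for $\hat{\bm v}$. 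It is shorter and cleaner to do what the paper does: apply the $\Pi^{\mathrm{div}}_p$ estimate directly to the divergence-free field $\hat{\bm v}$, where the divergence term vanishes, then subtract the mean and scale.
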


\begin{proof}
The construction of the Raviart--Thomas projection-based interpolation operator can be found in \cite[Theorem~2.10, property (vi)]{Melenkhp}. Since
$\bm{v}\in \bm{H}^1_0(\Omega)\cap \bm{H}(\tdiv^0;\Omega)$ satisfies
$\nabla{\cdot} \bm{v}=0$, the divergence contribution appearing in the general approximation estimate vanishes. Consequently, the interpolation error satisfies the $p$-optimal bound \eqref{eq:RT-hp}.
\end{proof}

\begin{lemma}[Global N\'ed\'elec projection-based interpolation]\label{lemma: Global Nedelec} There exists an operator $\bm{I}_{\rm N}^{p}:\bm{H}^2_0(\Omega)\rightarrow \bm{N}_{p}(\mathcal{T}_h)\cap \bm{H}_0(\tcurl;\Omega)$, with $p\geq1$, such that for any $\bm{v}\in \bm{H}^2_0(\Omega)$, the following orthogonality properties hold,
	\begin{align}
		(\bm{I}_{\rm N}^{p} (\bm{v})-\bm{v}, \bm{q}_K)_K&=0\quad \forall \bm{q}_K\in \bm{P}_{p-2}(K), ~ \forall K\in \mesh, \label{eq:Nedelec-property3}\\
	(\curl{(\bm{I}_{\rm N}^{p} (\bm{v})-\bm{v})}, \bm{w}_K)_K&=0\quad \forall \bm{w}_K\in \bm{P}_{p-1}(K),~ \forall K\in \mesh. \label{eq:Nedelec-property4}
	\end{align}
In addition, the following commuting relation holds
\begin{equation}\label{eq:Nedelec-commut}
 \curl{ \bm{I}_{\rm N}^{p} (\bm{v})} = \bm{I}_{\rm RT}^{p}(\curl{ \bm{v}}).
\end{equation}
Moreover, for all $K\in\mesh$, we have
	\begin{align}
		\|	\bm{v}-\bm{I}_{\rm N}^{p} (\bm{v})\|_{K}\leq C \left(\frac{h^2_K}{p}\right)  | \bm{v}|_{\bm{H}^2(K)}.\label{eq:Nedelec-hp}
	\end{align}
\end{lemma}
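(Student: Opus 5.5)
The plan is to take $\bm{I}_{\rm N}^{p}$ to be the projection-based Nédélec interpolant of \cite{Melenkhp}. Properties \eqref{eq:Nedelec-property3}--\eqref{eq:Nedelec-commut} come from the way that operator is constructed, and \eqref{eq:Nedelec-hp} will follow from the commuting diagram together with a discrete Poincaré/Friedrichs argument. Since $\bm{v}\in\bm{H}^2_0(\Omega)$, it has enough regularity for the edge, face and cell degrees of freedom to be well defined. Its tangential traces vanish on $\partial\Omega$, so all boundary degrees of freedom vanish and $\bm{I}_{\rm N}^{p}(\bm{v})\in\bm{H}_0(\tcurl;\Omega)$.

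\textbf{Orthogonality and commuting property.} The interior (cell) step of the projection-based construction on each $K$ fixes the bubble part of $\bm{I}_{\rm N}^{p}(\bm{v})|_K$ by two conditions:
\begin{itemize}
\item $(\curl{(\bm{I}_{\rm N}^{p}\bm{v}-\bm{v})},\curl{\bm{\phi}})_K=0$ for all Nédélec bubbles $\bm{\phi}$;
\item $(\bm{I}_{\rm N}^{p}\bm{v}-\bm{v},\nabla\psi)_K=0$ for all scalar bubbles $\psi$.
\end{itemize}
The curls of the bubbles, together with the constants fixed by lower-dimensional moments, span the divergence-free part of $\bm{P}_{p-1}(K)$. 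Combining this with Stokes' theorem on the face moments gives \eqref{eq:Nedelec-property4} for every $\bm{w}_K\in\bm{P}_{p-1}(K)$. Property \eqref{eq:Nedelec-property3} then follows from the Helmholtz-type splitting $\bm{P}_{p-2}(K)=\nabla\mathbb{P}_{p-1}(K)\oplus\bm{x}{\times}\bm{P}_{p-3}(K)$ in 3D, and the analogous splitting in 2D. The gradient part is handled by the gradient orthogonality plus the edge and face moments. For the rotational part, one writes $\bm{q}=\curl{\bm{\phi}}$ with $\bm{\phi}$ of degree $p-1$ and integrates by parts, using the tangential face moments and \eqref{eq:Nedelec-property4}. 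The commuting relation \eqref{eq:Nedelec-commut} is a consequence of these moment identities. Indeed, $\curl{\bm{I}_{\rm N}^{p}\bm{v}}$ is divergence-free and lies in $\bm{RT}_p$, and it has the same face normal moments as $\curl{\bm{v}}$ (by Stokes' theorem and the edge/face moments) and the same interior moments (by \eqref{eq:Nedelec-property4}). These are exactly the conditions that define $\bm{I}_{\rm RT}^{p}(\curl{\bm{v}})$ in \cite[Theorem~2.10]{Melenkhp}, and unisolvence of the $\bm{RT}$ degrees of freedom gives equality.

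\textbf{Approximation estimate.} For \eqref{eq:Nedelec-hp}, I would work on the reference element and scale with $h_K$. Write $\bm{e}=\bm{v}-\bm{I}_{\rm N}^{p}\bm{v}$. By \eqref{eq:Nedelec-commut} and Lemma~\ref{lemma: Global RT}, applied to the divergence-free field $\curl{\bm{v}}$,
\[
\|\curl{\bm{e}}\|_K\le C\frac{h_K}{p}|\bm{v}|_{\bm{H}^2(K)} .
\]
I would then decompose $\bm{e}$ locally into a gradient part and a part controlled by its curl, namely a regular decomposition $\bm{e}=\nabla\varphi+\bm{z}$ with $\|\bm{z}\|_K\lesssim h_K\|\curl{\bm{e}}\|_K$. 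The gradient part is estimated through the orthogonality \eqref{eq:Nedelec-property3} against $\nabla\mathbb{P}_{p-1}$, combined with the $p$-optimal $H^1$ approximation of the scalar potential. That potential approximation gains a factor $h_K/p$ from the $L^2$ duality, and an extra $h_K$ comes from the $H^2$ regularity of $\bm{v}$.

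\textbf{Main obstacle.} The hard step is controlling the gradient component with the sharp factor $p^{-1}$ rather than the suboptimal $p^{-1/2}$. Here I would rely directly on the $hp$-estimates of \cite{Melenkhp}, property (vi) for the Nédélec interpolant, which give $\|\bm{e}\|_K\lesssim (h_K/p)\bigl(\|\nabla\bm{e}\|_K+\dots\bigr)$ for fields with $\bm{H}^1$ curl. I would then combine this with the $\bm{H}^1$-stability bound $|\bm{e}|_{\bm{H}^1(K)}\lesssim h_K|\bm{v}|_{\bm{H}^2(K)}$ to obtain the stated rate $h_K^2/p$.
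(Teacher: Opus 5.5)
\textbf{Gap: the proof of \eqref{eq:Nedelec-hp}.} The N\'ed\'elec estimate of Melenk--Rojik is property (iii) of their Theorem~2.10; property (vi) is the Raviart--Thomas one. After scaling, it is of best-approximation type. Since $\bm{I}_{\rm N}^{p}$ reproduces the N\'ed\'elec space, for every $\bm{v}_h$ in that space
\begin{equation*}
\|\bm{v}-\bm{I}_{\rm N}^{p}(\bm{v})\|_{K}\le \frac{C}{p}\Big(\|\bm{w}\|_{K}+h_K|\bm{w}|_{\bm{H}^1(K)}+h_K\|\curl{\bm{w}}\|_{K}+h_K^2|\curl{\bm{w}}|_{\bm{H}^1(K)}\Big),\qquad \bm{w}:=\bm{v}-\bm{v}_h .
\end{equation*}
You effectively take $\bm{w}=\bm{e}$. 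You then need $|\bm{e}|_{\bm{H}^1(K)}\lesssim h_K|\bm{v}|_{\bm{H}^2(K)}$, plus a $p$-robust bound on the term $h_K^2|\curl{\bm{e}}|_{\bm{H}^1(K)}$ hidden in your dots. Neither is available. Such $p$-robust $\bm{H}^1$-bounds on the interpolation error are not part of the cited $hp$-theory, and deriving them from inverse estimates costs powers of $p$. Indeed, for the curl term you would have to use the paper's own bound $\|\nabla\bm{I}^{p}_{\rm RT}(\curl{\bm{\zeta}})\|_K\le Cp|\bm{\zeta}|_{\bm{H}^2(K)}$, which cancels the factor $p^{-1}$. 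In addition, with $\bm{w}=\bm{e}$ the term $p^{-1}\|\bm{e}\|_K$ reappears on the right-hand side.

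Your regular-decomposition sketch does not repair this. The potential $\varphi$ is built from $\bm{e}$, which contains the degree-$p$ polynomial $\bm{I}_{\rm N}^{p}(\bm{v})$. So its regularity is not controlled by $|\bm{v}|_{\bm{H}^2(K)}$ uniformly in $p$. Moreover, the interior orthogonality against $\nabla\mathbb{P}_{p-1}(K)$ only returns $\|\bm{e}\|_K$ times an approximation error of $\varphi$.

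\textbf{Missing idea: use the freedom in $\bm{v}_h$.} Choose a $p$-independent low-order approximation, e.g.\ the lowest-order canonical N\'ed\'elec interpolant, as the paper does. Standard $h$-version estimates then bound all four terms by $Ch_K^2|\bm{v}|_{\bm{H}^2(K)}$, which gives the rate $h_K^2/p$ at once. This needs no curl estimate via $\bm{I}_{\rm RT}^{p}$, no decomposition of $\bm{e}$, and no $\bm{H}^1$-bound on the interpolation error.

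\textbf{Other properties.} For the orthogonality and commuting properties, the paper simply cites the construction of Melenk--Rojik, and you should do the same. Your rederivation of \eqref{eq:Nedelec-commut} goes through unisolvence of moment degrees of freedom. That implicitly treats both operators as canonical moment interpolants. For the projection-based operators, the commuting diagram is a theorem of Melenk--Rojik and should be cited as such.
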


\begin{proof}
The orthogonality conditions defining the projection-based interpolation operator can be found in \cite[Definition~2.2]{Melenkhp} and also see \cite[Section 2.5.3]{Boffibook}. The construction and approximation properties of the N\'ed\'elec projection-based interpolation operator are established in \cite[Theorem~2.10, property~(iii)]{Melenkhp}. In particular, for any $\bm{v}_h\in \bm{N}_{p}(\mathcal{T}_h)\cap \bm{H}_0(\tcurl;\Omega)$, it holds that
\[
\|\bm{v}-\bm{I}_{\rm N}^{p}(\bm{v})\|_{K}
\leq
C p^{-1}\Big(
\|\bm{v}-\bm{v}_h\|_{K}
+
h_K|\bm{v}-\bm{v}_h|_{\bm{H}^1(K)}
+
h_K\|\curl(\bm{v}-\bm{v}_h)\|_{K}
+
h_K^2|\curl(\bm{v}-\bm{v}_h)|_{\bm{H}^1(K)}
\Big).
\]
Since $\bm{v}\in \bm{H}^2_0(\Omega)$, we choose
$
\bm{v}_h=\bm{\mathcal{I}}^1_{\rm c}\bm{v}
\in
\bm{N}_{1}(\mathcal{T}_h)\cap \bm{H}_0(\tcurl;\Omega),
$
where $\bm{\mathcal{I}}^1_{\rm c}$ denotes the canonical N\'ed\'elec interpolation operator of the first family. Then, invoking the interpolation estimate from \cite[Theorem~16.10]{Ern_Guermond_FEs_I_2021}, we obtain
\[
\Big(
\|\bm{v}-\bm{v}_h\|_{K}
+
h_K|\bm{v}-\bm{v}_h|_{\bm{H}^1(K)}
+
h_K\|\curl(\bm{v}-\bm{v}_h)\|_{K}
+
h_K^2|\curl(\bm{v}-\bm{v}_h)|_{\bm{H}^1(K)}
\Big)
\leq
C h_K^2 |\bm{v}|_{\bm{H}^2(K)}.
\]
Substituting this estimate into the  bound yields \eqref{eq:Nedelec-hp}.
\end{proof}

\section{\emph{H}$(\tdiv)$-conforming discontinuous Galerkin method}\label{sec: H(div)-DG}
In this section, we present the main ideas underlying the $\bm{H}(\tdiv)$-conforming dG method and state some useful results for the forthcoming analysis.

First, we introduce the discrete spaces based on the Raviart-Thomas (RT) finite element space. For $p\geq 1$, we have
\begin{align*}
	\bm{V}_{h}^p:=\bm{RT}_p(\mesh) \cap \bm{H}(\tdiv;\Omega).\end{align*}
In addition, the subspace with strongly enforced homogeneous boundary conditions and mean-free piecewise polynomial spaces are defined as:
\begin{equation}
	\bm{V}_{h0}^p:=\bm{RT}_p(\mesh) \cap \bm{H}_0(\tdiv;\Omega),\quad
	Q_h^p:=\{q\in \mathbb{P}_p(\mesh); (q,1)_{\Omega} =0\}.
\end{equation}

The discrete formulation reads: Find $(\bm{u}_h,p_h)\in \bm{V}_{h0}^p{\times} Q_h^p$ such that
\begin{alignat}{2}
	a_h(\bm{u}_h,\bm{v}_h) +b(\bm{v}_h,p_h)&=(\bm{f},\bm{v})_{\Omega}&&\quad \forall \bm{v}_h\in \bm{V}_{h0}^p,\label{eq:discrete1}\\
	-b(\bm{u}_h,q)&=0&&\quad \forall q\in Q_h^p\label{eq:discrete2},
\end{alignat}
with the discrete bilinear form $a_h(\bullet,\bullet)$ such that, for all $\bm{v}_h$, $\bm{w}_h \in \bm{V}_{h0}^p$,
\begin{align*}
    a_h(\bm{v}_h,\bm{w}_h):&=\sum_{K\in \mesh} (\mu \nabla \bm{v}_h, \nabla \bm{w}_h)_K
	-\sum_{F\in \Fall} (\mu \{\bm{n}{\times}(\nabla \bm{w}_h \n ) \} , \bm{n}{\times}\jump{\bm{v}_h})_F\\
	&\;-\sum_{F\in \Fall} (\mu \{(\bm{n}{\times}(\nabla \bm{v}_h  \n) ) \} , \bm{n}{\times}\jump{\bm{w}_h})_F+\sum_{F\in \Fall} ( \sigma \mu \bm{n}{\times}\jump{\bm{v}_h}, \bm{n}{\times}\jump{\bm{w}_h})_F,
\end{align*}
where the discontinuity penalty parameter is defined as:
\begin{equation}\label{def: penalty}
	\sigma|_F = \frac{C_{\sigma} p^2}{h_F} \qquad \forall F\in \Fall,
\end{equation}
with the user-dependent parameter $C_{\sigma} $ depending on the mesh-shape regularity, and should be chosen large enough \cite[(6.2)]{EggerWaluga2013}.

Next, we define the lifting operator $\mathcal{L}: \bm{V}_{h0}^{p+1}+\bm{V}\rightarrow [\bm{V}_{h0}^p]^d$, \begin{align}\label{def: lifting operator}
	(\mathcal{L}(\bm{v}),\underline{\bm{\theta}}_h )_{\Omega}=-\sum_{F\in \Fall} (\{\bm{n}{\times}(\underline{\bm{\theta}}_h\n)\}, \bm{n}{\times}\jump{\bm{v}})_F\quad \forall \underline{\bm{\theta}}_h\in \left[\bm{V}_{h0}^p\right]^d.
\end{align}
By using the discrete trace inequality \eqref{eq: discrete trace}, the following stability estimate holds
\begin{equation}
	\|\mathcal{L}(\bm{v})\|_{\Omega}^2\leq C\sum_{F\in \Fall}\left(\frac{p^2}{h_F}\right) \|  \bm{n}{\times} \jump{\bm{v}}\|_{F}^2 \label{eq:lifting}.
\end{equation}
Then the bilinear form $a_h(\bm{v},\bm{w})$ can be written as follows:
\begin{align}\label{eq:def-ah-lifting}
	a_h(\bm{v},\bm{w})
	=\sum_{K\in \mesh} (\mu \nabla \bm{v}, \nabla  \bm{w})_K +(\mu\mathcal{L}(\bm{w}), \nabla_h \bm{v})_{\Omega}
	+(\mu\mathcal{L}(\bm{v}),  \nabla_h \bm{w})_{\Omega}
	+\sum_{F\in \Fall} ( \sigma \mu \bm{n}{\times}\jump{\bm{v}},\bm{n}{\times} \jump{\bm{w}})_F.
\end{align}
For the simplification of the presentation, we define
\begin{equation}
	\begin{split}
		B_h((\bm{u}_h,p_h);(\bm{v}_h,q_h)):=a_h(\bm{u}_h,\bm{v}_h)+b(\bm{v}_h,p_h)-b(\bm{u}_h,q_h).
	\end{split}
	\label{eq:Bh}
\end{equation}
Then, we can recast the discrete formulation as follows: Find $(\bm{u}_h,p_h)\in \bm{V}_{h0}^p{\times} Q_h^p$ such that
\begin{align}
	B_h((\bm{u}_h,p_h);(\bm{v}_h,q_h))=(\bm{f},\bm{v}_h)_{\Omega},\quad \forall (\bm{v}_h,q_h)\in \bm{V}_{h0}^p{\times} Q_h^p.\label{eq:discrete}
\end{align}
Moreover, the above discrete scheme can be reformulated as: Find $\bm{u}_h\in \bm{V}_{h0}^p\cap \bm{H}(\tdiv^0;\Omega)$ such that for any $\bm{v}_h\in  \bm{V}_{h0}^p\cap \bm{H}(\tdiv^0;\Omega)$
\begin{equation}
	a_h(\bm{u}_h, \bm{v}_h) = (\bm{f},\bm{v}_h)_{\Omega}. \label{eq:discrete div-free}
\end{equation}
As shown in \cite{LedererSchoberl18}, the discrete problem~\eqref{eq:discrete} is well-posed.

\section{Local Helmholtz decomposition}\label{sec: Local Helmholtz Decomposition}

We derive a local Helmholtz decomposition of tensor-valued fields in $\underline{\bm{L}}^2(\oma)$. We only consider the case $d=3$, which can be viewed as an extension of \cite[Lemma 3.2]{DariDuranPadra1995} from $d=2$ to $d=3$. A key contribution of this work is the derivation of a stability estimate for the local Helmholtz decomposition on vertex patches, with a stability constant depending solely on the mesh shape-regularity.

Let $\varepsilon_{ijk}$ denote the Levi-Civita symbol for all $i,j,k\in\{1{:}d\}$.
We define the (vector-valued) curl of $\bm{v}$ has components $(\nabla {\times}\bm{v})_{i} := \varepsilon_{ijk} \partial_j v_{k}$ for all $i\in\{1{:}d\}$. Here and in what follows, we employ the usual summation convention on repeated indices. The (tensor-valued) gradient of a vector-valued field \(\bm v\) is defined componentwise by $
(\nabla \bm v)_{ij}:=\partial_j v_i$ for all $ i,j\in\{1{:}d\}$. For a tensor-valued field $\underline{\bm{A}} = (A_{ij})_{i,j\in\{1{:}d\}}$, its (tensor-valued) row-wise curl is defined as $(\curlrw{\underline{\bm{A}}})_{ij}:= \varepsilon_{jkl} \partial_k A_{il}$ for all $i,j\in\{1{:}d\}$, and its (vector-valued) row-wise divergence is defined as $(\divrw{\underline{\bm{A}}})_{i} = \sum_{j} \partial_j A_{ij}$ for all $i\in\{1{:}d\}$.

\begin{lemma}\label{lemma:helmholtz}(Local Helmholtz decomposition).
	For $\underline{\bm{\theta}}\in \underline{\bm{L}}^2(\oma)$,	there exists $\bm{\phi}\in \bm{H}^1_{0}(\oma)\cap \bm{H}(\tdiv^0;\oma)$, $q\in L^2_0(\oma)$,
	and $\underline{\bm{\beta}}\in \underline{\bm{H}}^1(\oma)$, such that
	\begin{align}\label{eq: HD identity}
		\underline{\bm{\theta}}=  \nabla \bm{\phi}+ \curlrw{\underline{\bm{\beta}}} - q\underline{I}.
	\end{align}
	Moreover, it holds
	\begin{align}\label{eq: HD stability}
		\| \nabla  \bm{\phi}\|_{\oma}
		+ \|q\|_{\oma}
		+ 	| \underline{\bm{\beta}}|_{\underline{\bm{H}}^1(\oma)}\leq C\|\underline{\bm{\theta}} \|_{\oma}.
	\end{align}
\end{lemma}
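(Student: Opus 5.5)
The plan is to build the three pieces of \eqref{eq: HD identity} one after another, each from a well-posed problem on $\oma$. Throughout, we track that every constant depends only on shape regularity.

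\textbf{Step 1: the Stokes part.} We first obtain $(\bm{\phi},q)\in \bm{H}^1_0(\oma)\times L^2_0(\oma)$ as the solution of the patch Stokes problem
\begin{align*}
(\nabla\bm{\phi},\nabla\bm{v})_{\oma}-(q,\nabla{\cdot}\bm{v})_{\oma}&=(\underline{\bm{\theta}},\nabla\bm{v})_{\oma} &&\forall \bm{v}\in\bm{H}^1_0(\oma),\\
(\nabla{\cdot}\bm{\phi},r)_{\oma}&=0 &&\forall r\in L^2_0(\oma).
\end{align*}
We note that $q\underline{I}:\nabla\bm{v}=q\,\nabla{\cdot}\bm{v}$, so the first equation reads $(\underline{\bm{\theta}}-\nabla\bm{\phi}+q\underline{I},\nabla\bm{v})_{\oma}=0$. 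The second equation gives $\nabla{\cdot}\bm{\phi}=0$, since $\int_{\oma}\nabla{\cdot}\bm{\phi}=0$ for $\bm{\phi}$ with zero trace. Testing with $\bm{v}=\bm{\phi}$ yields $\|\nabla\bm{\phi}\|_{\oma}\le\|\underline{\bm{\theta}}\|_{\oma}$. The pressure bound $\|q\|_{\oma}\le C\|\underline{\bm{\theta}}\|_{\oma}$ follows from the inf--sup condition on $\oma$. Its constant is the inverse of the continuous inf--sup constant of the patch, equivalently the Bogovski\u{\i} right-inverse of the divergence on $\oma$.

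\textbf{Main obstacle.} Step 1 is where I expect the real difficulty: we need the inf--sup constant of $\oma$ to be bounded uniformly in terms of shape regularity only. A vertex patch is star-shaped with respect to a ball around $\ba$ of radius comparable to $h_{\oma}$ only when it is an interior patch. For boundary vertices, or for patches that are not star-shaped with respect to a ball of that size, I would proceed differently. I would use that $\oma$ is a finite union of simplices with uniformly many configurations up to scaling, and that any two neighbouring cells share a face. The inf--sup constant on a union of domains with overlapping chains can then be bounded in terms of the constants of the pieces (the Bogovski\u{\i} / decomposition-of-$L^2_0$ argument). Each simplex, or the union of two face-adjacent simplices, is star-shaped with respect to a ball of comparable radius, so Bogovski\u{\i}'s constant there depends only on the chunkiness. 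Scaling invariance of the inf--sup constant then gives a bound independent of $h$.

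\textbf{Step 2: the curl part.} Set $\underline{\bm{\psi}}:=\underline{\bm{\theta}}-\nabla\bm{\phi}+q\underline{I}$. Step 1 shows that each row $\bm{\psi}_i$ is $L^2$-orthogonal to $\nabla v$ for all $v\in H^1_0(\oma)$, hence $\nabla{\cdot}\bm{\psi}_i=0$ in $\oma$. We have $\|\underline{\bm{\psi}}\|_{\oma}\le C\|\underline{\bm{\theta}}\|_{\oma}$. We now need $\bm{\beta}_i\in\bm{H}^1(\oma)$ with $\nabla{\times}\bm{\beta}_i=\bm{\psi}_i$ and $|\bm{\beta}_i|_{\bm H^1}\le C\|\bm{\psi}_i\|$; stacking the $\bm{\beta}_i$ as rows gives $\curlrw{\underline{\bm{\beta}}}=\underline{\bm{\psi}}$, which is \eqref{eq: HD identity}. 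The interior of a vertex patch is a Lipschitz domain homeomorphic to a ball, so it has no holes and its boundary is connected. We therefore use the regularized Poincar\'e-type potential of \cite{CostabelMcIntosh2010}: every divergence-free field in $\bm L^2$ on such a domain is the curl of an $\bm H^1$ potential, with a stable bound. Simple topology enters only through the absence of harmonic Neumann fields; their absence follows because $\bm{\psi}_i$ need not have vanishing normal trace, and we only require $\bm H^1(\oma)$ rather than $\bm H^1_0(\oma)$, so connectedness and trivial second Betti number suffice. For $h$-uniformity, the Costabel--McIntosh operator on a domain star-shaped with respect to a ball has norm depending only on chunkiness. For general patches we glue the local potentials cell-chain by cell-chain, correcting each by gradients on the overlaps; this is the same covering argument as in Step 1. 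Finally, dilation invariance of $|\cdot|_{\bm H^1}$ versus $\|\cdot\|_{\bm L^2}$ makes the constant scale-free. Summing the bounds from the two steps gives \eqref{eq: HD stability}.
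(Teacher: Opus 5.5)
Your plan follows the paper's proof essentially step by step. The same patch Stokes problem produces $(\bm{\phi},q)$; the pressure bound is reduced to a shape-regular inf--sup constant, via star-shapedness for interior patches and a chain of star-shaped unions of face-adjacent simplices with Bogovski\u{\i} theory for boundary patches; and the divergence-free remainder $\underline{\bm{\theta}}-\nabla\bm{\phi}+q\underline{I}$ is written row-wise as the curl of an $\bm{H}^1(\oma)$ potential on the topologically trivial patch.

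Two minor corrections. First, the obstruction to such a potential without boundary conditions is the space of curl- and divergence-free fields with vanishing \emph{tangential} trace, not vanishing normal trace; its dimension is the number of boundary components minus one, which is exactly what your second-Betti-number conclusion uses. Second, gluing curl potentials along a chain is not literally the $L^2_0$-splitting argument of Step~1: it needs connected, simply connected overlaps with the already-glued region and an $H^2$ extension of the scalar gradient correction. The paper likewise settles this uniform-constant point only by citing Guzm\'an--Salgado.
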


\begin{proof}
We consider the following Stokes problem: Find $(\bm{\phi},q)\in \bm{H}^1_{0}(\oma)\times L^2_0(\oma)$ such that
\begin{alignat}{2}
		( \nabla \bm{\phi}, \nabla \bm{v})_{\oma}-(q,\nabla{\cdot} \bm{v})_{\oma}&=(\underline{\bm{\theta}},\nabla \bm{v})_{\oma} &&\quad \forall \bm{v}\in \bm{H}^1_{0}(\oma),\label{eq:stokes1}\\
		(\nabla {{\cdot} }\bm{\phi},\chi)_{\oma}&=0&&\quad \forall \chi \in L^2_0(\oma).
	\end{alignat}
By the inf-sup theory of \cite[Lemma~3.1]{Hannukainen12} (see also \cite{Bacuta16}),
\begin{align}
\|\nabla\bm{\phi}\|_{\oma}^2
+C_B\|q\|_{\oma}^2
\le
\left(\frac{\sqrt5-1}{2}\right)^2
\|\underline{\bm{\theta}}\|_{\oma}^2,
\label{eq:stokes-stability}
\end{align}
where $C_B$ denotes the inf-sup constant for the divergence operator on the vertex patch $\oma$.

To bound $C_B$, we first recall the geometric properties of vertex patches. For an interior vertex, $\oma$ is star-shaped with respect to a ball whose radius is comparable to the diameter of the patch. This result is established in \cite[Proposition~8.2]{chaumontfrelet:hal-05204325} for all dimensions $d\ge2$ by reduction to the two-dimensional case, where the corresponding result is classical; see \cite{LeePre:79}. For a boundary vertex, uniform star-shapedness with respect to a single ball may fail, for example near re-entrant corners. In this case, $\oma$ can be decomposed into a uniformly bounded chain of star-shaped subdomains formed by unions of simplices sharing a common face; see the proof of \cite[Lemma~4.3]{dong:hal-05213366}. Consequently, by the Bogovski\u{\i} theory (see the discussion preceding the theorem together with \cite[Corollary~28 and Theorem~35]{GuzmanAbner21}), the inf-sup constant $C_B$ depends only on the mesh shape-regularity.

Therefore,
\begin{align}
\|\nabla\bm{\phi}\|_{\oma}
+\|q\|_{\oma}
\le
C\|\underline{\bm{\theta}}\|_{\oma},
\label{eq:estimate-stokes}
\end{align}
where the constant $C$ depends only on the mesh shape-regularity.

	Next, a simple computation shows that  \eqref{eq:stokes1} is equivalent to:
	\begin{align}
		( \nabla \bm{\phi}, \nabla \bm{v})_{\oma}-(q\underline{I} ,\nabla \bm{v})_{\oma}&=( \underline{\bm{\theta}},\nabla\bm{v})_{\oma}\quad \forall \bm{v}\in \bm{H}^1_{0}(\oma).\label{eq:stokes1-non}
	\end{align}
	By setting $\underline{\bm{\xi}}:=   \underline{\bm{\theta}}-\nabla \bm{\phi}+q\underline{I}$, where $\underline{\bm{\xi}}:=(\bm{\xi}_1,\bm{\xi}_2,\bm{\xi}_3)^{\top}$,
	we have $\bm{0}= (\bm{\xi},\nabla \bm{v})_{\oma}$, which implies $ \bm{\xi}_i \in \bm{H}(\tdiv^0;\oma)$, for $i=1,2,3$.  Furthermore, as the vertex patch $\oma$ is simply connected, applying the \cite[Theorem 32]{GuzmanAbner21} for each row $\underline{\bm{\xi}}$,  there exists $\underline{\bm{\beta}}\in \underline{\bm{H}}^1(\oma)$ such that $\bm{\xi}_i= \tcurl\underline{\bm{\beta}}_i$ with $\underline{\bm{\beta}}:=(\bm{\beta}_1,\bm{\beta}_2,\bm{\beta}_3)^{\top}$. This leads to~\eqref{eq: HD identity}.
	Moreover, using the fact that $\|\nabla {\bm{\beta}}_i\|_{\oma} \leq C \|\curl  {\bm{\beta}}_i\|_{\oma}$, where $C>0$ depends on the mesh shape regularity, we have the following bound
	\begin{align}
		| \underline{\bm{\beta}}|_{\underline{\bm{H}}^1(\oma)}  \leq C \|  \underline{\bm{\theta}}- \nabla \bm{\phi} +q\underline{I}\|_{\oma}
		\leq C  \|\underline{\bm{\theta}} \|_{\oma}.
	\end{align}
    Moreover, the above
	bound combined with the triangle inequality and the bounds~\eqref{eq:estimate-stokes} leads to~\eqref{eq: HD stability}. This completes the proof.
\end{proof}


\section{Pressure robust $hp$-a posteriori error analysis}\label{sec:apost}

In this section, we carry out the pressure-robust residual-based $hp$–a posteriori error analysis for the $ \bm{H}(\tdiv)$-conforming dG methods for the Stokes problem in three dimensions. Moreover, we assume $\bm{f}\in \bm{H}(\tcurl; \Omega)$ and the domain $\Omega$ is simply connected.

The goal is to establish an upper error bound on the approximation error
\begin{align}
	\bm{e}:=	\bm{u}-\bm{u}_h,\label{eq:error-decomposition}
\end{align}
which measures the difference between the exact solution of velocity~\eqref{eq:weak-B}, $\bm{u}$, and the $ \bm{H}(\tdiv)$-conforming dG solution of~\eqref{eq:discrete}, $\bm{u}_{h}$.
We will use the following local error indicators: For all $K\in \mesh$,
	\begin{align} \label{Error indicators}
& \qquad	\eta_{K,{\rm res}}^2
		:={} \mu^{-1}  \left( \frac{h_K}{p}\right)^4\| \bm{\Pi}_K^{p-2} (\curl{\bm{f}}) + \mu \curl{\Delta \bm{u}_h}\|_{K}^2,\\
		\eta_{K,{\rm sta}}^2
		:={}
		 & \mu\sum_{F\in \dK} \alpha_F\left(\frac{p^2}{h_F}\right)\|\bm{n}{\times} \jump{\bm{u}_h}\|_{F}^2,
\quad
\eta_{K, {\rm tan}}^2
		:={}
		\mu \sum_{F\in \dK} \alpha_F \left( \frac{h_F}{p}\right) \|\n {\times}\jump{\nabla_h \bm{u}_h} \|_{F}^2, \nonumber \\
		\eta_{K, {\rm nor}}^2
		:={}&
		\frac{\mu}{2} \!\sum_{F\in \dK \cap \Fint}  \left( \frac{h_F}{p}\right) \| \jump{ \nabla_h \bm{u}_h\bm{n}} \|_{F}^2,
\quad
		\eta_{K, {\rm tan},\Delta}^2
	:={}	\frac{\mu}{2} \!\sum_{F\in \dK \cap \Fint} \left( \frac{h_F}{p}\right)^3 \|\bm{n}{\times}\jump{ \Delta_h \bm{u}_h }  \|_{F}^2, \nonumber
		\end{align}
with $\alpha_F = 1$ for $F\in \Fb$ and $\alpha_F = \frac12$ otherwise. In addition, we  define the data oscillation term
	\begin{equation}
		\mathcal{O}(\bm{f})_K^2
		:= \mu^{-1}\left( \frac{h_K}{p}\right)^4\| \curl{\bm{f}} - \bm{\Pi}_K^{p-2} (\curl{\bm{f}}) \|_{K}^2.
		\label{est:oscillation}
	\end{equation}
\begin{remark}[Data oscillation term] In the definition of $\mathcal{O}(f)_K$, we employ $\bm{\Pi}_K^{p-2}$, which has a higher convergence rate compared to the broken $H^1$-norm error for smooth data. However, this choice is flexible; one may also consider applying the RT-projection $\bm{I}_{\rm RT}^{p-3} (\curl{f})$. As the convergence rate of such a choice is the same as the broken $H^1$-norm error for smooth enough solutions, and $(\curl{f} - \bm{I}_{\rm RT}^{p-3} (\curl{f}))$ is divergence-free.
\end{remark}

\subsection{Abstract error bound}
Our first step is to bound the velocity error by the sum of the dual norm of a suitable residual functional on the energy space $\bm{H}^1_0(\Omega)\cap \bm{H}(\tdiv^0; \Omega)$ and a nonconforming error term measuring the deviation of $\bm{u}_{h}$ from this space.
\begin{lemma}[Abstract error bound]\label{Lemma: abstract error bound}
	The following holds:
	\begin{equation}\label{eq: abstract error bound}
		\|\mu^{\frac12}\nabla_h \bm{e}\|_{\Omega}^2
		\leq  \mu^{-1}\|\mathcal{R}_{\mesh}\|_{\bm{H}^{-1}(\Omega)}^2
		+\inf_{\bm{w}\in \bm{H}^1_0(\Omega)\cap \bm{H}(\tdiv^0;\Omega)}\|\mu^{\frac12} \nabla_h (\bm{w} - \bm{u}_{h}) \|_{\Omega}^2,
	\end{equation}
	where $\mathcal{R}_{\mesh}\in \bm{H}^{-1}(\Omega)$ denotes the residual functional such that
	\begin{equation}\label{def: residual functional}
		\mathcal{R}_{\mesh}(\bm{w}) := (\mu \nabla_h \bm{e}, \nabla \bm{w})_{\Omega}
		= (\bm{f},\bm{w})_\Omega - (\mu \nabla_h  \bm{u}_{h}, \nabla \bm{w})_{\Omega}, \qquad \forall \bm{w}\in \bm{H}^1_0(\Omega)\cap \bm{H}(\tdiv^0; \Omega),
	\end{equation}
	and its dual norm is defined as
	\begin{equation} \label{eq:def_res_dual}
		\|\mathcal{R}_{\mesh}\|_{\bm{H}^{-1}(\Omega)} := \sup_{\bm{0}\neq\bm{w}\in \bm{H}^1_0(\Omega)\cap \bm{H}(\tdiv^0; \Omega)} \frac{\mathcal{R}_{\mesh}(\bm{w})}{\|\nabla \bm{w}\|_\Omega}.
	\end{equation}
\end{lemma}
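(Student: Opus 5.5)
This is the standard Hilbert-space projection argument, in the spirit of the abstract error bounds for nonconforming methods. Set $\bm{X}:=\bm{H}^1_0(\Omega)\cap \bm{H}(\tdiv^0;\Omega)$ and equip it with the inner product $(\mu\nabla\bm{v},\nabla\bm{w})_\Omega$. We split the discrete velocity into a conforming part and a remainder. Let $\bm{s}\in\bm{X}$ be the $\mu$-weighted $H^1$-orthogonal projection of $\bm{u}_h$ onto $\bm{X}$, defined by
\[
(\mu\nabla \bm{s},\nabla\bm{w})_\Omega=(\mu\nabla_h\bm{u}_h,\nabla\bm{w})_\Omega \qquad \forall \bm{w}\in\bm{X}.
\]
This is well posed by the Lax--Milgram lemma, since $\bm{X}$ is a closed subspace of $\bm{H}^1_0(\Omega)$ and the form is coercive there by the Poincaré inequality.

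The error then decomposes as $\nabla_h\bm{e}=\nabla(\bm{u}-\bm{s})+\nabla_h(\bm{s}-\bm{u}_h)$. By construction, $\nabla_h(\bm{s}-\bm{u}_h)$ is $\mu$-orthogonal to $\nabla\bm{w}$ for every $\bm{w}\in\bm{X}$. Because $\bm{u}-\bm{s}\in\bm{X}$, the cross term vanishes and Pythagoras gives
\[
\|\mu^{\frac12}\nabla_h\bm{e}\|_\Omega^2=\|\mu^{\frac12}\nabla(\bm{u}-\bm{s})\|_\Omega^2+\|\mu^{\frac12}\nabla_h(\bm{s}-\bm{u}_h)\|_\Omega^2 .
\]

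The second term is the infimum term. For any $\bm{w}\in\bm{X}$, orthogonality together with $\bm{s}-\bm{w}\in\bm{X}$ yields
\[
\|\mu^{\frac12}\nabla_h(\bm{w}-\bm{u}_h)\|^2=\|\mu^{\frac12}\nabla(\bm{w}-\bm{s})\|^2+\|\mu^{\frac12}\nabla_h(\bm{s}-\bm{u}_h)\|^2 .
\]
Hence $\bm{s}$ attains the infimum.

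For the first term, take $\bm{w}:=\bm{u}-\bm{s}\in\bm{X}$. Using the definition of $\bm{s}$,
\[
\|\mu^{\frac12}\nabla\bm{w}\|^2=(\mu\nabla\bm{u},\nabla\bm{w})-(\mu\nabla_h\bm{u}_h,\nabla\bm{w})=(\mu\nabla_h\bm{e},\nabla\bm{w})=\mathcal{R}_{\mesh}(\bm{w}).
\]
The residual form of $\mathcal{R}_{\mesh}$ stated in \eqref{def: residual functional} follows from \eqref{eq:weak-div-free}. It follows that $\mathcal{R}_{\mesh}(\bm{w})\le\|\mathcal{R}_{\mesh}\|_{\bm{H}^{-1}(\Omega)}\|\nabla\bm{w}\|_\Omega$, which gives $\|\mu^{\frac12}\nabla\bm{w}\|\le\mu^{-\frac12}\|\mathcal{R}_{\mesh}\|_{\bm{H}^{-1}(\Omega)}$. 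Squaring this and adding the infimum term gives \eqref{eq: abstract error bound}.

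There is no serious obstacle. The only point needing care is the $\mu$-scaling: the dual norm \eqref{eq:def_res_dual} is unweighted, which is exactly why the factor $\mu^{-1}$ appears in front of it.
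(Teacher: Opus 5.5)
Your proof is correct and is essentially the paper's argument. The paper takes $\bm{v}$ to be the minimizer of $\|\mu^{\frac12}\nabla_h(\bm{w}-\bm{u}_h)\|_{\Omega}$ over $\bm{H}^1_0(\Omega)\cap\bm{H}(\tdiv^0;\Omega)$ and deduces the orthogonality relation, whereas you define the same element through the orthogonality relation (via Lax--Milgram) and deduce that it is the minimizer; both then conclude with the Pythagoras identity and the dual-norm bound with the $\mu^{-1}$ scaling.
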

\begin{proof}
	Choosing $\bm{v}:=\argmin_{\bm{w}\in \bm{H}^1_0(\Omega)\cap \bm{H}(\tdiv^0; \Omega)} \|\mu^{\frac12}\nabla_{h} (\bm{w} - \bm{u}_{h}) \|_{\Omega}$, we obtain the orthogonality relation
	\begin{equation*}
		(\mu \nabla_{h} (\bm{v} - \bm{u}_{h}), \nabla  \bm{w})_{\Omega} = 0 \qquad \forall \bm{w}\in \bm{H}^1_0(\Omega)\cap \bm{H}(\tdiv^0; \Omega).
	\end{equation*}
	The Pythagoras identity then gives
	\begin{equation*}
		\|\mu^{\frac12} \nabla_{h} \bm{e}\|_{\Omega}^2 =  \| \mu^{\frac12} \nabla (\bm{u}-\bm{v})\|_{\Omega}^2 + \|\mu^{\frac12} \nabla_{h}  (\bm{v}-\bm{u}_{h})\|_{\Omega}^2.
	\end{equation*}
	For the first term on the right-hand side, using again the above orthogonality relation and the definition of the dual norm, we infer that
	\begin{equation*}
		\|\mu^{\frac12} \nabla (\bm{u}-\bm{v})\|_{\Omega}^2  = (\mu \nabla_{h} \bm{e}, \nabla (\bm{u}-\bm{v}))_{\Omega} = \mathcal{R}_{\mesh}(\bm{u}-\bm{v})
		\leq  \mu^{-\frac12}\|\mathcal{R}_{\mesh}\|_{\bm{H}^{-1}(\Omega)} \|\mu^{\frac12} \nabla (\bm{u}-\bm{v})\|_{\Omega}.
	\end{equation*}
	Putting everything together completes the proof.
\end{proof}

\subsection{Bound on dual residual norm}

To derive a pressure-robust upper bound for the dual residual norm, we invoke the interpolation operator \eqref{lemma: Global BS} and employ the generalized Bogovskiĭ operator from \cite[Section 1.2]{Ahmed21}; see also \cite[Theorem~4.9]{CostabelMcIntosh2010}. Let $\bm{v} \in \bm{H}^1_{0}(\Omega) \cap \bm{H}(\tdiv^0;\Omega)$, and assume that $\Omega$ is a simply connected Lipschitz domain. Then there exists $\bm{\zeta}\in \bm{H}^2_0(\Omega)$ such that
\begin{equation}\label{eq: Bogovskii operator}
	\bm{v} = \curl{ \bm{\zeta}}.
\end{equation}
Moreover, the following stability estimate holds:
\begin{align}
	\|\bm{\zeta}\|_{\bm{H}^2(\Omega)}\leq C(\Omega) \| \nabla \bm{v}\|_{\Omega}\label{eq:stability-H2}.
\end{align}

\begin{lemma}[Bound on dual residual norm]\label{lemma: dual norm of residual} The following holds:
	\begin{equation} \label{eq:dual_res1}
\begin{split}
		\mu^{-1}\|\mathcal{R}_{\mesh}\|_{\bm{H}^{-1}(\Omega)}^2
\leq& C(\Omega) \min \left\{ \sum_{K\in \mesh}
		  \left\{
		p \eta_{K,{\rm sta}}^2	+ \eta_{K,{\rm nor}}^2+\eta_{K,{\rm tan},\Delta}^2
		 + \eta_{K,{\rm res}}^2+ \mathcal{O}(\bm{f})^2_K\right\}, \right.\\
& \qquad \qquad  \left.
 \sum_{K\in \mesh}
p^2\left\{
 \eta_{K,{\rm sta}}^2 + \eta_{K,{\rm nor}}^2+ \mathcal{O}(\bm{f})^2_K
\right\}
\right\}.
\end{split}
	\end{equation}
\end{lemma}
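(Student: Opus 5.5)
Fix $\bm{w}\in \bm{H}^1_0(\Omega)\cap \bm{H}(\tdiv^0;\Omega)$ with $\|\nabla\bm{w}\|_\Omega=1$. By the generalized Bogovski\u{\i} result \eqref{eq: Bogovskii operator}--\eqref{eq:stability-H2} we write $\bm{w}=\curl{\bm{\zeta}}$ with $\bm{\zeta}\in\bm{H}^2_0(\Omega)$ and $\|\bm{\zeta}\|_{\bm{H}^2(\Omega)}\le C(\Omega)$. The key is a discrete divergence-free test function $\bm{w}_h=\curl{\bm{\zeta}_h}\in \bm{V}_{h0}^p\cap\bm{H}(\tdiv^0;\Omega)$, which is admissible in \eqref{eq:discrete div-free}. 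Galerkin orthogonality then gives
\[
\mathcal{R}_{\mesh}(\bm{w})=\big[(\bm{f},\bm{w}-\bm{w}_h)_\Omega-(\mu\nabla_h\bm{u}_h,\nabla(\bm{w}-\bm{w}_h))_\Omega\big]+\big[a_h(\bm{u}_h,\bm{w}_h)-(\mu\nabla_h\bm{u}_h,\nabla_h\bm{w}_h)_\Omega\big],
\]
where the second bracket contains only the consistency and penalty face terms of $a_h$. The two alternatives in the $\min$ come from two choices of $\bm{\zeta}_h$. For the first bound we take $\bm{\zeta}_h=\mBS^{p+1}$ applied componentwise, so that $\curl{\bm{\zeta}_h}$ lies in the continuous $\bm{P}_p$ space, hence in $\bm{V}_{h0}^p$. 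For the second bound we take $\bm{\zeta}_h=\bm{I}_{\rm N}^{p}(\bm{\zeta})$, so that by \eqref{eq:Nedelec-commut} $\bm{w}_h=\bm{I}_{\rm RT}^p(\bm{w})$.

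For the first bound we integrate by parts elementwise on the first bracket. The term $(\bm{f}+\mu\Delta_h\bm{u}_h,\bm{w}-\bm{w}_h)_K$ is rewritten through $\bm{w}-\bm{w}_h=\curl(\bm{\zeta}-\bm{\zeta}_h)$ and a second integration by parts. This produces $(\curl\bm{f}+\mu\curl\Delta\bm{u}_h,\bm{\zeta}-\bm{\zeta}_h)_K$ together with the face terms $(\bm{n}\times\jump{\Delta_h\bm{u}_h},\bm{\zeta}-\bm{\zeta}_h)_F$. Here we use that $\bm{\zeta}-\bm{\zeta}_h$ is continuous and vanishes on $\partial\Omega$, and that the jump of $\bm{f}$ is zero since $\bm{f}\in\bm{H}(\tcurl)$. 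The remaining face terms are $(\jump{\nabla_h\bm{u}_h\bm{n}},\bm{w}-\bm{w}_h)_F$. We add and subtract $\bm{\Pi}_K^{p-2}\curl\bm{f}$ and apply Cauchy--Schwarz with the weights of \eqref{eq: hp-approximation BS}: $(h/p)^2$ for $\|\bm{\zeta}-\bm{\zeta}_h\|_K$, $(h/p)^{3/2}$ for the face trace of $\bm{\zeta}-\bm{\zeta}_h$, and $(h/p)^{1/2}$ for the face trace of $\nabla(\bm{\zeta}-\bm{\zeta}_h)$, which controls $\bm{w}-\bm{w}_h$. These give exactly $\eta_{\rm res}$, $\mathcal{O}(\bm{f})$, $\eta_{{\rm tan},\Delta}$ and $\eta_{\rm nor}$.

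The second bracket is the main obstacle. It is
\[
-\sum_F(\mu\{\bm{n}\times(\nabla\bm{w}_h\bm{n})\},\bm{n}\times\jump{\bm{u}_h})_F+\sum_F(\sigma\mu\,\bm{n}\times\jump{\bm{u}_h},\bm{n}\times\jump{\bm{w}_h})_F,
\]
and $\bm{n}\times\jump{\bm{w}_h}=0$ because $\bm{w}_h$ is continuous. For the remaining term we split $\nabla\bm{w}_h=\nabla\bm{w}+\nabla(\bm{w}_h-\bm{w})$ in the face trace. The $\nabla(\bm{w}_h-\bm{w})$ part is handled by the $(p/h)^{1/2}$-weighted trace estimate of $\nabla^2(\bm{\zeta}-\bm{\zeta}_h)$; if that is unavailable, we instead use the discrete trace inequality \eqref{eq: discrete trace} on $\nabla\bm{w}_h$ together with the stability $\|\nabla^2\bm{\zeta}_h\|\le C$. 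The $\nabla\bm{w}$ part has only $H^1$ regularity and needs a continuous trace. We expect to get $p^{1/2}\eta_{\rm sta}$ by comparing with the $L^2$-projection \eqref{eq:L2projection-trace}, losing $p^{1/2}$ against the $p^2/h$ weight, which accounts for the factor $p\,\eta_{\rm sta}^2$. Summing and taking the supremum over $\bm{w}$ gives the first bound.

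For the second bound, $\bm{w}-\bm{w}_h=\bm{w}-\bm{I}_{\rm RT}^p\bm{w}$ is controlled by \eqref{eq:RT-hp}. We use the orthogonalities \eqref{eq:Nedelec-property3}--\eqref{eq:Nedelec-property4} to remove the polynomial parts of the volume residual, so that only $\mathcal{O}(\bm{f})$ survives. The face terms involving $\jump{\nabla_h\bm{u}_h}$ and $\Delta_h\bm{u}_h$ are absorbed via inverse estimates \eqref{eq:inverse} and discrete trace inequalities into $\eta_{\rm nor}$ and $\eta_{\rm sta}$, each losing one power of $p$. The discontinuous $\bm{w}_h$ is treated with \eqref{eq:lifting} and \eqref{eq:inverse}, again costing a factor $p$. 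This yields the $p^2$-weighted second bound.
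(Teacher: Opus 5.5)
Your second bound (N\'ed\'elec projection, $\bm w_h=\bm I_{\rm RT}^p(\bm w)$ via \eqref{eq:Nedelec-commut}, orthogonalities \eqref{eq:Nedelec-property3}--\eqref{eq:Nedelec-property4}) follows the paper's route. The first bound, however, rests on a false claim. The field $\bm w_h=\curl{\bm\zeta_h}$ with $\bm\zeta_h=\mBS^{p+1}(\bm\zeta)$ is \emph{not} continuous. Since $\bm\zeta_h$ is only $C^0$, $\curl{\bm\zeta_h}$ has a continuous normal component, which is enough for $\bm w_h\in\bm V_{h0}^p\cap\bm H(\tdiv^0;\Omega)$, so admissibility is fine. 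But its tangential components contain normal derivatives of $\bm\zeta_h$, and these jump. Hence $\bm n{\times}\jump{\bm w_h}\neq\bm 0$, and you have dropped nonzero terms:
\begin{itemize}
\item in the first bracket, elementwise integration by parts also produces $\sum_{F\in\Fall}\mu(\{\nabla_h\bm u_h\bm n\},\jump{\bm w_h})_F$;
\item in the second bracket, both the symmetric consistency term $-\sum_{F\in\Fall}\mu(\{\bm n{\times}(\nabla_h\bm u_h\bm n)\},\bm n{\times}\jump{\bm w_h})_F$ and the penalty term $\sum_{F\in\Fall}(\sigma\mu\,\bm n{\times}\jump{\bm u_h},\bm n{\times}\jump{\bm w_h})_F$ survive.
\end{itemize}
The first two cancel exactly. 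Indeed $\bm n{\cdot}\jump{\bm w_h}=0$, so $\{\nabla_h\bm u_h\bm n\}{\cdot}\jump{\bm w_h}=\{\bm n{\times}(\nabla_h\bm u_h\bm n)\}{\cdot}(\bm n{\times}\jump{\bm w_h})$. The penalty term does not cancel, and it is precisely the source of $p\,\eta_{K,{\rm sta}}^2$ in the paper. Write $\jump{\bm w_h}=-\jump{\curl{(\bm\zeta-\bm\zeta_h)}}$ and use the face estimate $\|\nabla(\bm\zeta-\bm\zeta_h)\|_{\dK}\le C(h_K/p)^{1/2}|\bm\zeta|_{\bm H^2(K)}$, the same one you use for the $\eta_{K,{\rm nor}}$ term. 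This gives $\|\sigma^{1/2}\bm n{\times}\jump{\bm w_h}\|_F\le C\,p^{1/2}|\bm\zeta|_{\bm H^2(K_1\cup K_2)}$ for $F=\partial K_1\cap\partial K_2$, i.e.\ a loss of $p^{1/2}$ against the weight $p^2/h_F$.

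Conversely, the mechanism you propose for the $p^{1/2}$ loss does not work. In the term with $\{\bm n{\times}(\nabla\bm w_h\bm n)\}$, splitting $\nabla\bm w_h=\nabla\bm w+\nabla(\bm w_h-\bm w)$ requires face traces of $\nabla\bm w=\nabla(\curl{\bm\zeta})$. This field is only in $\underline{\bm L}^2$, since $\bm\zeta\in\bm H^2_0(\Omega)$. Applying \eqref{eq:L2projection-trace} would need $\nabla\bm w\in H^1(K)$, i.e.\ $\bm\zeta\in\bm H^3$, and no face estimate for $\nabla^2(\bm\zeta-\bm\zeta_h)$ is available either. Your fallback is the correct treatment and is in fact $p$-optimal. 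Since $\nabla_h\bm w_h$ is piecewise polynomial, use \eqref{eq: discrete trace} (or the lifting bound \eqref{eq:lifting}, as the paper does) together with $\|\nabla_h\bm w_h\|_K\le C\|\nabla^2\bm\zeta_h\|_K\le C\|\nabla^2\bm\zeta\|_K$. This bounds the term by $\eta_{\rm sta}$ with no extra power of $p$. So your final estimate is correct, but only after you reinstate the penalty term. As written, the argument drops a nonvanishing term and attributes the loss to an ill-defined step.

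A smaller caution for the second bound: do not integrate $(\mu\Delta\bm u_h,\curl{(\bm\zeta-\bm\zeta_h)})_K$ by parts there. Kill it directly with \eqref{eq:Nedelec-property4}, which applies because $\Delta\bm u_h|_K\in\bm P_{p-1}(K)$. Then integrate only $(\bm f,\curl{(\bm\zeta-\bm\zeta_h)})_\Omega$ by parts globally before using \eqref{eq:Nedelec-property3}. The reason is that tangential jumps of $\Delta_h\bm u_h$ cannot be absorbed into $\eta_{K,{\rm nor}}$ or $\eta_{K,{\rm sta}}$ by inverse estimates. The face traces of $\bm w-\bm I_{\rm RT}^p(\bm w)$ need more than \eqref{eq:RT-hp} alone. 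As in the paper, compare with $\bm\Pi_K^{p+1}\bm w$ and use \eqref{eq:L2projection-trace} and \eqref{eq: discrete trace} as well.
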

\begin{proof}
	The proof is postponed to Section \ref{sec: proof dual norm of residual}.
\end{proof}
\subsection{Bound on nonconforming error}
To bound the nonconforming error, it suffices to pick any function in $\bm{H}^1_{0}(\Omega) \cap \bm{H}(\tdiv^0;\Omega)$. In this section, we show how to reconstruct from $\bm{u}_{h}\in \bm{V}_{h0}^p\cap \bm{H}(\tdiv^0;\Omega)$ to such a function  using  hat basis functions of partition of unity and local solves on vertex patches.
Recall that the  hat basis functions satisfy the following partition-of-unity property:
\begin{equation}\label{eq: PU}
	\sum_{\ba\in \vertice} \psi_{\ba} =1 \quad \text{on } \Omega.
\end{equation}

\begin{definition}[Patchwise and global Stokes-reconstruction]\label{def: Stokes reconstruction}
	Let $\bm{w}_h \in  \bm{V}_{h0}^p$. For all $\ba\in \vertice$, let $\bm{w}_{\ba} \in \bm{H}^1_0{(\oma)}$ and $p_{\ba} \in L^2_0(\oma)$ solve the following well-posed problem:
	\begin{alignat}{2}
		(\nabla \bm{w}_{\ba}, \nabla\bm{v}_{\ba})_{\oma}  -(p_{\ba},\nabla{\cdot} \bm{v}_{\ba})_{\oma} &=	(\nabla_h(\psi_{\ba}\bm{w}_h) + \mathcal{L}(\psi_{\ba}\bm{w}_h), \nabla\bm{v}_{\ba})_{\oma}
		&&\quad \forall \bm{v}_{\ba}\in \bm{H}^1_{0}(\oma), \label{eq:reconstruction-Stokes 1}\\
		(q_{\ba},\nabla{\cdot} \bm{w}_{\ba})_{\oma}	&= (q_{\ba},\nabla{\cdot} (\psi_{\ba} {\bm{w}_h}))_{\oma}  &&\quad \forall q_{\ba}\in  L^2_0 (\oma)\label{eq:reconstruction-Stokes 2}.
	\end{alignat}
	Extending $\bm{w}_{\ba}$ by zero to $\Omega$,  set
	\begin{equation}\label{def: Stokes reconstruction global}
		\mathcal{R}_s(\bm{w_h})  :=  \sum_{\ba \in \vertice} \bm{w}_{\ba}.
	\end{equation}
\end{definition}
An application of \eqref{eq:reconstruction-Stokes 1}  implies
\begin{align}
	(\nabla_h(\bm{w}_{\ba}-\psi_{\ba}\bm{w}_h), \nabla\bm{v}_{\ba})_{\oma}= (\mathcal{L}(\psi_{\ba}\bm{w}_h),\nabla \bm{v}_{\ba})_{\oma}\quad \forall \bm{v}_{\ba} \in \bm{H}^1_{0}(\oma)\cap \bm{H}(\tdiv^0;\oma).\label{eq:difference-lifting}
\end{align}

\begin{remark}[local problem]
	As the vertex patch $\oma$ is a simply connected
	Lipschitz domain with connected boundary,  the function $\bm{w}_h \in  \bm{V}_{h0}^p$, and $\psi_{\ba}\in H^1(\oma)$, we infer
	$
	(\nabla{\cdot}(\psi_{\ba} \bm{w}_h),1)_{\oma} = (\n {\cdot}(\psi_{\ba} \bm{w}_h),1)_{\partial \oma} =0,
	$
	which implies that $\nabla{\cdot}(\psi_{\ba} \bm{w}_h)\in L^2_0 (\oma)$. Therefore, the above problem \eqref{eq:reconstruction-Stokes 1}--\eqref{eq:reconstruction-Stokes 2} is well-posed. Moreover, an equivalent definition is
	\begin{equation}\label{def: argmin of Stokes reconstruction}
		\bm{w}_{\ba}:=\argmin_{
			\substack
			{\bm{\rho}_{\ba} \in \bm{H}^1_0(\oma),} \\
			\substack{\nabla {\cdot} \bm{\rho}_{\ba} =  \nabla {\cdot} (\psi_{\ba} \bm{w}_h)}
		} \| (\nabla_h (\psi_{\ba} \bm{w}_h)+  \mathcal{L}(\psi_{\ba} \bm{w}_h))-\nabla \bm{\rho}_{\ba}\|_{\oma}.
	\end{equation}
\end{remark}
A simple but crucial observation is that $\bm{w}_{\ba} \in \bm{H}^1_0(\oma)$ with $\nabla {\cdot} \bm{w}_{\ba}=\nabla {\cdot} (\psi_{\ba} \bm{w}_h)$. In addition, using the partition-of-unity \eqref{eq: PU},
we have
\begin{equation}\label{divergence of Stokes_reconstruction}
	\nabla {\cdot} 	\mathcal{R}_s(\bm{w_h}) = \sum_{\ba\in \vertice} (\nabla {\cdot} \bm{w}_{\ba})
	= \sum_{\ba\in \vertice} (\nabla {\cdot} (\psi_{\ba} \bm{w}_h))
	= \nabla {\cdot} \left( \sum_{\ba\in \vertice} (\psi_{\ba} \bm{w}_h) \right)
	=   \nabla {\cdot} \bm{w}_h,
\end{equation}
which implies
\begin{equation}\label{def: global Stokes reconstruction}
		\mathcal{R}_s(\bm{w_h}) \in \bm{H}^1_0(\Omega), \qquad \nabla {\cdot} \mathcal{R}_s(\bm{w_h}) = \nabla {\cdot} \bm{w}_h.
\end{equation}


\begin{lemma}[Bound on nonconforming error]\label{lemma: nonconforming error total}
	Since the $\bm{H}(\tdiv)$-dG solution satisfies $\bm{u}_{h}\in \bm{V}_{h0}^p\cap \bm{H}(\tdiv^0;\Omega)$, there exists a constant $C>0$, depending only on the mesh shape-regularity, such that
	\begin{subequations} \label{eq:main_bounds} \begin{align}
			\|\mu^{\frac12}\nabla_h (	\mathcal{R}_s(\bm{u_h}) -\bm{u}_h) \|_{\Omega}^2
			\leq {} C \sum_{K\in \mathcal{T}_h} \left\{
			\eta_{K,{\rm tan}}^2 + \eta_{K,{\rm sta}}^2
			\right\},
			\label{eq: Global nonconforming error bound weighted broken curl}
		\end{align}
	\end{subequations}
\end{lemma}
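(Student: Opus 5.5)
Since $\sum_{\ba}\psi_{\ba}=1$, I will write $\mathcal{R}_s(\bm{u}_h)-\bm{u}_h=\sum_{\ba\in\vertice}(\bm{w}_{\ba}-\psi_{\ba}\bm{u}_h)$. Each summand is supported in $\oma$, and every cell belongs to at most $d+1$ patches, so
\[
\|\nabla_h(\mathcal{R}_s(\bm{u}_h)-\bm{u}_h)\|_{\Omega}^2\le (d+1)\sum_{\ba\in\vertice}\|\nabla_h(\bm{w}_{\ba}-\psi_{\ba}\bm{u}_h)\|_{\oma}^2 .
\]
This reduces the claim to a patchwise bound: $\|\nabla_h(\bm{w}_{\ba}-\psi_{\ba}\bm{u}_h)\|_{\oma}^2$ should be controlled by the tangential jumps of $\bm{u}_h$ and of $\nabla_h\bm{u}_h$ on the faces of $\mesha$, including the faces on $\partial\oma$ where $\psi_{\ba}$ vanishes. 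The factor $\mu$ is a common multiplier, so I will drop it.

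\textbf{Patchwise bound.} Set $\underline{\bm{\theta}}:=\nabla_h(\bm{w}_{\ba}-\psi_{\ba}\bm{u}_h)$ on $\oma$ and apply Lemma~\ref{lemma:helmholtz} to write $\underline{\bm{\theta}}=\nabla\bm{\phi}+\curlrw{\underline{\bm{\beta}}}-q\underline{I}$, with the stability bound \eqref{eq: HD stability}. Then
\[
\|\underline{\bm{\theta}}\|_{\oma}^2=(\underline{\bm{\theta}},\nabla\bm{\phi})_{\oma}+(\underline{\bm{\theta}},\curlrw{\underline{\bm{\beta}}})_{\oma}-(\underline{\bm{\theta}},q\underline{I})_{\oma},
\]
and I treat the three terms in turn.

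\emph{Pressure term.} It vanishes because $\mathrm{tr}\,\underline{\bm{\theta}}=\nabla\cdot\bm{w}_{\ba}-\nabla\cdot(\psi_{\ba}\bm{u}_h)=0$. This uses that $\bm{u}_h$ is $\bm{H}(\mathrm{div})$-conforming, so the broken divergence equals the global one, and that \eqref{eq:reconstruction-Stokes 2} matches the divergence exactly.

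\emph{Gradient term.} Since $\bm{\phi}\in\bm{H}^1_0(\oma)\cap\bm{H}(\tdiv^0;\oma)$, identity \eqref{eq:difference-lifting} gives $(\underline{\bm{\theta}},\nabla\bm{\phi})=(\mathcal{L}(\psi_{\ba}\bm{u}_h),\nabla\bm{\phi})$. The lifting stability \eqref{eq:lifting} bounds this by $C(\sum_F \frac{p^2}{h_F}\|\n\times\jump{\psi_{\ba}\bm{u}_h}\|_F^2)^{1/2}\|\nabla\bm{\phi}\|$. Because $0\le\psi_{\ba}\le1$ is continuous, $\|\n\times\jump{\psi_{\ba}\bm{u}_h}\|_F\le\|\n\times\jump{\bm{u}_h}\|_F$, so this term is controlled by $\eta_{K,{\rm sta}}$ for $K\in\mesha$.

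\emph{Curl term.} The field $\bm{w}_{\ba}\in\bm{H}^1_0(\oma)$ is curl-free against $\curlrw{}$ after integration by parts, because its trace vanishes on $\partial\oma$ and it is continuous across interior faces. Hence, elementwise,
\[
(\underline{\bm{\theta}},\curlrw{\underline{\bm{\beta}}})_{\oma}=-\sum_{K}(\nabla(\psi_{\ba}\bm{u}_h),\curlrw{\underline{\bm{\beta}}})_K=\pm\sum_{F}(\jump{\psi_{\ba}\bm{u}_h}\times\n,\underline{\bm{\beta}})_F ,
\]
where the face sum runs over all faces of $\mesha$, including those on $\partial\oma$ (the row-wise version of $(\nabla v,\curl\bm{\beta})_K=(v\,\n\times\cdot,\bm{\beta})_{\partial K}$). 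On faces in $\partial\oma\setminus\partial\Omega$ we have $\psi_{\ba}=0$, so the contribution is zero. Only tangential jumps of $\psi_{\ba}\bm{u}_h=\psi_{\ba}\jump{\bm{u}_h}$ appear on interior faces. Since $\underline{\bm{\beta}}$ is determined only up to constants, and $\psi_{\ba}$ vanishes on $\partial\oma$ away from $\partial\Omega$, I will subtract a face-wise polynomial approximation of $\underline{\bm{\beta}}$. I would try $\underline{\bm{\beta}}-\mKM(\underline{\bm{\beta}})$, combined with the orthogonality in \eqref{eq:discrete div-free} tested with a suitable discrete function. The approximation bound \eqref{eq: hp-approximation}, $\|\underline{\bm{\beta}}-\mKM\underline{\bm{\beta}}\|_{\partial K}\le C(h/p)^{1/2}|\underline{\bm{\beta}}|_{H^1}$, then pairs with $(p/h)^{1/2}$-weighted jumps. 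If that orthogonality route fails, the cruder pairing $\|\psi_{\ba}\n\times\jump{\bm{u}_h}\|_F\,\|\underline{\bm{\beta}}-\bar{\underline{\bm{\beta}}}\|_F$, with a trace inequality on $\oma$, still gives a bound by $\eta_{\rm sta}$ weighted by $h^{-1/2}$, which is dominated by $p^2/h$.

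\textbf{Where $\eta_{\rm tan}$ enters, and the main obstacle.} The indicator $\eta_{\rm tan}$ should arise when the gradient part is handled using the discrete equation rather than the crude lifting bound. Specifically, $\nabla_h(\psi_{\ba}\bm{u}_h)=\psi_{\ba}\nabla_h\bm{u}_h+\bm{u}_h\otimes\nabla\psi_{\ba}$, and integrating $\psi_{\ba}\nabla_h\bm{u}_h$ against $\curlrw{\underline{\bm{\beta}}}$ by parts produces the face terms $\psi_{\ba}\,\n\times\jump{\nabla_h\bm{u}_h}$ against $\underline{\bm{\beta}}-\mKM\underline{\bm{\beta}}$, with weight $(h/p)^{1/2}$. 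I expect the curl term to be the main obstacle: it needs a careful integration by parts that exposes only tangential jumps of $\bm{u}_h$ and of $\nabla_h\bm{u}_h$, with $p$-optimal weights. Once the patch bound is established, the three estimates combined with \eqref{eq: HD stability} let me divide by $\|\underline{\bm{\theta}}\|_{\oma}$, and summing over vertices completes the proof.
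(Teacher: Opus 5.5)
Your reduction to a patchwise bound, the Helmholtz splitting of $\underline{\bm{\theta}}=\nabla_h(\bm{w}_{\ba}-\psi_{\ba}\bm{u}_h)$, the vanishing of the pressure term, and the treatment of the gradient term via \eqref{eq:difference-lifting} and \eqref{eq:lifting} all coincide with the paper. The gap is the curl term $(\underline{\bm{\theta}},\curlrw{\underline{\bm{\beta}}})_{\oma}$. This term is the core of the lemma, and your proposal does not close it.

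(i) The identity you write for it cannot hold as stated. The volume integrand $\nabla v\cdot(\nabla\times\bm{\beta}_j)$ carries two derivatives, so after one integration by parts each face integrand must carry exactly one. It is either $\n\times\jump{\nabla_h v}$ against $\bm{\beta}_j$ (integrating the curl by parts) or $\jump{v}$ against $(\nabla\times\bm{\beta}_j)\cdot\n$ (integrating the gradient by parts). Pairing $\jump{\psi_{\ba}\bm{u}_h}$ with $\underline{\bm{\beta}}$, or with $\underline{\bm{\beta}}-\bar{\underline{\bm{\beta}}}$, is dimensionally inconsistent, so your fallback estimate does not apply. Neither correct form suffices by itself:
\begin{itemize}
\item the second needs an $L^2(F)$-trace of $\curlrw{\underline{\bm{\beta}}}\,\n$, which is not available for $\underline{\bm{\beta}}\in\underline{\bm{H}}^1(\oma)$;
\item the first, with a constant subtracted from $\underline{\bm{\beta}}$, gives the weight $h_F$ instead of $h_F/p$, i.e.\ $p\,\eta_{K,{\rm tan}}^2$.
\end{itemize}

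(ii) Subtracting $\mKM(\underline{\bm{\beta}})$ is legitimate only if $(\underline{\bm{\theta}},\curlrw{\mKM(\underline{\bm{\beta}})})_{\oma}$ is accounted for, and no orthogonality removes it. The lemma is a pure approximation property of $\bm{u}_h\in\bm{V}_{h0}^p\cap\bm{H}(\tdiv^0;\Omega)$. Testing \eqref{eq:discrete div-free} would only bring in $\bm{f}$, which does not appear in the bound.

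(iii) $\eta_{K,{\rm tan}}$ does not come from the gradient term, which yields only $\eta_{K,{\rm sta}}$.

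The missing idea is to split $\underline{\bm{\beta}}=(\underline{\bm{\beta}}-\mKM(\underline{\bm{\beta}}))+\mKM(\underline{\bm{\beta}})$ and integrate by parts differently on the two pieces. Keep $\nabla_h(\psi_{\ba}u_{h,j})$ intact as an elementwise gradient so that no volume term arises; treating $\psi_{\ba}\nabla_h u_{h,j}$ alone, as you suggest, leaves the volume term $(\nabla\psi_{\ba}\times\nabla u_{h,j},\cdot)_K$. In both pieces the $\bm{w}_{\ba}$-contribution vanishes since $\bm{w}_{\ba}\in\bm{H}^1_0(\oma)$.

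\emph{Remainder.} Integrate the curl by parts. Since $\psi_{\ba}$ and its tangential gradient vanish on $\Fma$, only $\sum_{F\in\Fa}\sum_j(\n\times\jump{\nabla_h(\psi_{\ba}u_{h,j})},\bm{\beta}_j-\mKM(\bm{\beta}_j))_F$ survives. The trace bound $\sum_{F\in\Fa}\sum_j\frac{p}{h_F}\|\bm{\beta}_j-\mKM(\bm{\beta}_j)\|_F^2\le C|\underline{\bm{\beta}}|_{\underline{\bm{H}}^1(\oma)}^2$ from \eqref{eq: hp-approximation} then leaves $\frac{h_F}{p}\|\n\times\jump{\nabla_h(\psi_{\ba}\bm{u}_h)}\|_F^2$. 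Apply the product rule on the face, with $\|\nabla\psi_{\ba}\|_{L^\infty}\le Ch_F^{-1}$. This turns the term into $\eta_{K,{\rm tan}}^2$ plus $\frac{1}{h_Fp}\|\jump{\bm{u}_h}\|_F^2\le\frac{p^2}{h_F}\|\n\times\jump{\bm{u}_h}\|_F^2$. Here $\|\jump{\bm{u}_h}\|_F=\|\n\times\jump{\bm{u}_h}\|_F$ by $\bm{H}(\tdiv)$-conformity and $\bm{u}_h\cdot\n=0$ on $\partial\Omega$.

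\emph{Polynomial part.} Integrate the gradient by parts instead. The field $\curlrw{\mKM(\underline{\bm{\beta}})}$ is row-wise divergence-free with single-valued normal components, so one obtains $-\sum_{F\in\Fa}(\jump{\psi_{\ba}\bm{u}_h},\curlrw{\mKM(\underline{\bm{\beta}})}\,\n)_F$, a pairing with a polynomial trace. The discrete trace inequality \eqref{eq: discrete trace} and the $H^1$-stability in \eqref{eq: hp-approximation} bound it by $C(\sum_{F\in\Fa}\frac{p^2}{h_F}\|\n\times\jump{\bm{u}_h}\|_F^2)^{1/2}|\underline{\bm{\beta}}|_{\underline{\bm{H}}^1(\oma)}$.

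Together with \eqref{eq: HD stability}, this closes the patch bound with exactly the $p$-weights of the lemma, without any use of the discrete equations.
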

\begin{proof}
	The proof is postponed to Section \ref{sec: proof full bound}.
\end{proof}

\begin{remark}[Inhomogeneous boundary condition]
The above Stokes-reconstruction can be modified to account for inhomogeneous boundary conditions of the form $\bm{u} = \bm{g}_{\rm D}$ on $\partial \Omega$.
For every mesh vertex $\ba \in \vertice $, we then solve for $\bm{w}_{\ba} \in \bm{H}^1_{g} (\oma):=\{\bm{v}\in \bm{H}^1(\oma) \;|\; \bm{v}|_{\partial \oma \cap \partial \Omega} = \psi_{\ba}\bm{g}_{\rm D}, \; \bm{v}|_{\partial \oma \cap \Omega} =  \bm{0} \}$
\begin{alignat}{2}
		(\nabla \bm{w}_{\ba}, \nabla\bm{v}_{\ba})_{\oma}  -(p_{\ba},\nabla{\cdot} \bm{v}_{\ba})_{\oma} &=	(\nabla_h(\psi_{\ba}\bm{w}_h) + \mathcal{L}(\psi_{\ba}\bm{w}_h), \nabla\bm{v}_{\ba})_{\oma}
		&&\quad \forall \bm{v}_{\ba}\in \bm{H}^1_{0}(\oma), \label{eq:reconstruction-Stokes 1-inhomo}  \nonumber \\
		(q_{\ba},\nabla{\cdot} \bm{w}_{\ba})_{\oma}	&= (q_{\ba},\nabla{\cdot} (\psi_{\ba} \bm{w}_h))_{\oma}  &&\quad \forall q_{\ba}\in  L^2_0 (\oma). \nonumber
	\end{alignat}
Then, extending $\bm{w}_{\ba}$ by zero to $\Omega$, $\mathcal{R}_s(\bm{w_h}) \in \bm{H}^1(\Omega)\cap \bm{H}(\tdiv^0;\Omega)$ is still defined as in~\eqref{def: Stokes reconstruction global}, and the partition-of-unity property~\eqref{eq: PU} implies that $\mathcal{R}_s(\bm{u_h})$ also satisfies the inhomogeneous boundary conditions. Finally, the upper bound from Lemma~\ref{lemma: nonconforming error total} still holds provided the tangential jump $\eta_{K, {\rm tan}}$  and penalty jump $\eta_{K,{\rm sta}}$ are redefined as follows: For all $F\in\Fb$,
$$
\n {\times}\jump{ \nabla_h \bm{u}_h} |_F :=
\n {\times} \nabla (\bm{u}_h)|_F  - \n {\times} \nabla (\bm{g}_{\rm D})|_F,
\qquad
\n {\times}\jump{ \bm{u}_h}|_F :=
\n {\times}  (\bm{u}_h)|_F  - \n {\times}  (\bm{g}_{\rm D})|_F.
$$
\end{remark}

\subsection{Main result}

We are now ready to state our main result which contains a global upper bound and a local lower bound.

\begin{theorem}[$hp$-upper error bounds]
	\label{theorem: a posterioir error bound}
	The following holds:
	\begin{align*}
		\|\mu^{\frac12}\nabla_h \bm{e}\|_{\Omega}^2
		+\mu \sum_{F\in \Fall} \left(\frac{p^2}{h_F}\right)\|\bm{n}{\times} \jump{\bm{u}_h}\|_{F}^2
		& \leq C(\Omega)\eta^2,
	\end{align*}
with the estimator $\eta$ defined as:
\begin{equation}\label{def: estimator}
\begin{split}
\eta^2 : =    \sum_{K\in \mesh}
\left\{\eta_{K,{\rm tan}}^2
\right\}
+ &\min \left\{ \sum_{K\in \mesh}
		  \left\{
		p \eta_{K,{\rm sta}}^2	+ \eta_{K,{\rm nor}}^2+\eta_{K,{\rm tan},\Delta}^2
		 + \eta_{K,{\rm res}}^2+ \mathcal{O}(\bm{f})^2_K\right\}, \right.\\
& \qquad  \left.
 \sum_{K\in \mesh}
p^2\left\{
 \eta_{K,{\rm sta}}^2 + \eta_{K,{\rm nor}}^2+ \mathcal{O}(\bm{f})^2_K
\right\}
\right\},
\end{split}
\end{equation}
	where the constant $C(\Omega)>0$ depends on the mesh shape-regularity and on the geometry of the domain.
\end{theorem}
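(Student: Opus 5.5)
The theorem follows by assembling the three preceding lemmas; the only extra work is the jump term on the left-hand side. I would start from the abstract error bound of Lemma~\ref{Lemma: abstract error bound}. It splits $\|\mu^{\frac12}\nabla_h \bm{e}\|_{\Omega}^2$ into the dual residual norm $\mu^{-1}\|\mathcal{R}_{\mesh}\|_{\bm{H}^{-1}(\Omega)}^2$ and the infimum over $\bm{w}\in \bm{H}^1_0(\Omega)\cap \bm{H}(\tdiv^0;\Omega)$ of the nonconforming distance. The residual term is controlled directly by Lemma~\ref{lemma: dual norm of residual}, which gives exactly the minimum of the two sums appearing in~\eqref{def: estimator}, with constant $C(\Omega)$.

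For the infimum, I pick $\bm{w}:=\mathcal{R}_s(\bm{u}_h)$. This choice is admissible for two reasons. First, \eqref{def: global Stokes reconstruction} gives $\mathcal{R}_s(\bm{u}_h)\in\bm{H}^1_0(\Omega)$. Second, $\nabla{\cdot}\mathcal{R}_s(\bm{u}_h)=\nabla{\cdot}\bm{u}_h=0$, because the discrete solution is exactly divergence-free: $\nabla{\cdot}\bm{V}_{h0}^p\subset \mathbb{P}_p(\mesh)$, its divergence has zero mean, and \eqref{eq:discrete2} tests it against all of $Q_h^p$. Lemma~\ref{lemma: nonconforming error total} then bounds this term by $C\sum_K\{\eta_{K,{\rm tan}}^2+\eta_{K,{\rm sta}}^2\}$.

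The $\eta_{K,{\rm tan}}^2$ part appears in $\eta^2$ as written. The $\eta_{K,{\rm sta}}^2$ part does not appear by itself, but both branches of the minimum contain it with a factor $p\ge1$ or $p^2\ge1$. Hence
\[
\sum_K\eta_{K,{\rm sta}}^2\le \min\Big\{\sum_K p\,\eta_{K,{\rm sta}}^2,\ \sum_K p^2\eta_{K,{\rm sta}}^2\Big\},
\]
and each of these is at most the corresponding full branch, so it is at most the minimum in~\eqref{def: estimator}.

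For the extra jump term on the left, I note that since every interior face belongs to two cells,
\[
\mu\sum_{F\in\Fall}\frac{p^2}{h_F}\|\bm{n}{\times}\jump{\bm{u}_h}\|_F^2=\sum_{K}\eta_{K,{\rm sta}}^2 .
\]
This identity holds because $\alpha_F=\tfrac12$ on interior faces and $\alpha_F=1$ on boundary faces. The same absorption argument then bounds it by the minimum term in $\eta^2$. Adding the three contributions, with the constants from the two lemmas and the shape-regularity constant merged into $C(\Omega)$, gives the claim.

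The main obstacle is bookkeeping rather than analysis. I must check that $\mathcal{R}_s(\bm{u}_h)$ is truly divergence-free and that the unweighted $\eta_{K,{\rm sta}}$ sum is dominated by both branches of the minimum. The first relies on the inclusion $\nabla{\cdot}\bm{V}_{h0}^p\subset Q_h^p$, which holds for Raviart--Thomas spaces. The second relies on $p\ge1$, so that $1\le p\le p^2$ and the inequality $\min(A+X,B+Y)\ge\min(X,Y)$ for nonnegative terms applies.
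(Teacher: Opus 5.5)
Your proposal is correct and matches the paper's proof, which simply combines Lemmas~\ref{Lemma: abstract error bound}, \ref{lemma: dual norm of residual} and~\ref{lemma: nonconforming error total}, implicitly with $\bm{w}=\mathcal{R}_s(\bm{u}_h)$ in the infimum. You also spell out bookkeeping the paper leaves implicit: the left-hand jump term equals $\sum_{K\in\mesh}\eta_{K,{\rm sta}}^2$, and because $p\ge 1$ this sum, like the $\eta_{K,{\rm sta}}^2$ contribution from the nonconforming bound, is dominated by both branches of the minimum.
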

\begin{proof}
	Combine Lemmas~\ref{Lemma: abstract error bound}, \ref{lemma: dual norm of residual},
	and \ref{lemma: nonconforming error total}.
\end{proof}

\begin{remark}[$\bm{BDM}_p$ spaces] \label{rem:BDM}
The \emph{a posteriori} error analysis can also be carried out for the $\bm{BDM}_p$ space in both two and three dimensions with only minor modifications. The nonconforming error analysis, as well as the conforming error analysis based on the modified global Babu\v{s}ka--Suri operator, is identical for the $\bm{BDM}_p$ and $\bm{RT}_p$ spaces. For the conforming error analysis employing the $hp$-version Nédélec projection-based interpolation operator, the required approximation results are, to the best of the authors' knowledge, currently available only for the first-family Nédélec spaces and the $\bm{RT}_p$ spaces (see, for example, \cite[Section 2.3]{Boffibook}). Consequently, for the $\bm{BDM}_p$ spaces, the conforming \emph{a posteriori} error bound can presently be established only in the $h$-version setting.
\end{remark}

\begin{theorem}[Local lower error bound]\label{theorem: lower bound}
	We will prove the following result: For all $K\in \mathcal{T}_h$,
	\begin{align}
	\eta_{K,{\rm res}}&\leq C(p)\Big( \|\mu^{\frac12}\nabla \bm{e}\|_{K}+ \mathcal{O}(\bm{f})_K\Big)\label{eq:error-cell-residual},\\
	\eta_{K,{\rm nor}}&\leq C(p) \sum_{K\in \omega_K}\Big(\|\mu^{\frac12}\nabla\bm{e}\|_{K}+\eta_{K,{\rm sta}}+\mathcal{O}(\bm{f})_K\Big) ,\label{eq:error-normal-flux}\\
	\eta_{K,{\rm tan},\Delta}&\leq C(p)\sum_{K\in \omega_K}\Big( \|\mu^{\frac12} \nabla\bm{e}\|_{K}  +\mathcal{O}(\bm{f})_K \Big),\label{eq:error-delta-uh}\\
	\eta_{K,{\rm tan}}&\leq  C(p)	\eta_{K,{\rm sta}},\label{eq:error-tangential}
\end{align}
where $\omega_K$ denotes the union of elements that share the face with $K$.
\end{theorem}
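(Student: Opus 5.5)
The plan is the Verfürth bubble technique, with test functions chosen so that the pressure never appears. Since $\bm{f}\in\bm{H}(\tcurl;\Omega)$ and $\bm{u}$ solves \eqref{eq:weak-div-free}, for every $\bm{\zeta}\in\bm{H}^2_0(\Omega)$ the field $\curl{\bm{\zeta}}$ is an admissible divergence-free test function. This gives the pressure-free identity
\begin{equation*}
(\mu\nabla\bm{e},\nabla\curl{\bm{\zeta}})_{\Omega}=(\curl{\bm{f}}+\mu\curl{\Delta_h\bm{u}_h},\bm{\zeta})_{\Omega}-(\text{face terms from elementwise integration by parts}).
\end{equation*}
To integrate by parts cellwise, one writes $(\mu\nabla_h\bm{u}_h,\nabla\curl{\bm{\zeta}})_K=-(\mu\Delta\bm{u}_h,\curl{\bm{\zeta}})_K+(\mu\nabla\bm{u}_h\bm{n},\curl{\bm{\zeta}})_{\partial K}$, and then $(\Delta\bm{u}_h,\curl{\bm{\zeta}})_K=(\curl{\Delta\bm{u}_h},\bm{\zeta})_K+(\bm{n}\times\Delta\bm{u}_h,\bm{\zeta})_{\partial K}$. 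Each indicator is then tested with a suitable $C^1$ bubble $\bm{\zeta}$ supported on $K$ or on a face patch $\omega_F$. All constants coming from polynomial inverse and bubble estimates are absorbed into $C(p)$.

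For \eqref{eq:error-cell-residual}, set $\bm{r}_K:=\bm{\Pi}_K^{p-2}(\curl{\bm{f}})+\mu\curl{\Delta\bm{u}_h}$ and take $\bm{\zeta}=b_K^2\bm{r}_K$, where $b_K$ is the cubic element bubble. Then $\bm{\zeta}\in\bm{H}^2_0(K)$, so $\bm{\zeta}$ and $\nabla\bm{\zeta}$ vanish on $\partial K$ and the face terms drop. By norm equivalence on polynomials,
\begin{equation*}
\|\bm{r}_K\|_K^2\le C(p)(\bm{r}_K,\bm{\zeta})_K=C(p)\big[(\mu\nabla\bm{e},\nabla\curl{\bm{\zeta}})_K-(\curl{\bm{f}}-\bm{\Pi}_K^{p-2}\curl{\bm{f}},\bm{\zeta})_K\big].
\end{equation*}
The inverse estimate $\|\nabla\curl{\bm{\zeta}}\|_K\le C(p)h_K^{-2}\|\bm{\zeta}\|_K$ then yields $h_K^2\|\bm{r}_K\|_K\le C(p)(\mu\|\nabla\bm{e}\|_K+h_K^2\|\curl{\bm{f}}-\bm{\Pi}_K^{p-2}\curl{\bm{f}}\|_K)$. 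Multiplying by $\mu^{-1/2}$ gives the claimed bound.

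For \eqref{eq:error-tangential}, note that on each face $\bm{n}\times\jump{\nabla_h\bm{u}_h}$ involves only tangential derivatives of $\bm{n}\times\jump{\bm{u}_h}$. A polynomial inverse estimate on $F$ therefore gives $\|\bm{n}\times\jump{\nabla_h\bm{u}_h}\|_F\le C(p)h_F^{-1}\|\bm{n}\times\jump{\bm{u}_h}\|_F$. Since the normal component of $\jump{\bm{u}_h}$ vanishes by $\bm{H}(\tdiv)$-conformity, the tangential jump is the full jump, so this inequality is legitimate. Matching the weights $h_F/p$ and $p^2/h_F$ then gives the bound with $C(p)$.

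For the two face indicators, fix an interior face $F$ with face patch $\omega_F=K_1\cup K_2$. The boundary terms in the identity above are $\jump{\nabla_h\bm{u}_h\bm{n}}\cdot\curl{\bm{\zeta}}$ and $\bm{n}\times\jump{\Delta_h\bm{u}_h}\cdot\bm{\zeta}$. The test functions must be $C^1$ across $F$ and vanish to first order on $\partial\omega_F$; I would build them from the fourth-order-problem construction, with $b_F$ the face bubble and $\lambda_F$ the barycentric coordinate vanishing on $F$.

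For \eqref{eq:error-delta-uh}, choose $\bm{\zeta}=b_F^2\,E(\bm{n}\times\jump{\Delta_h\bm{u}_h})$, where $E$ is a polynomial extension off $F$. This $\bm{\zeta}$ lies in $\bm{H}^2_0(\omega_F)$, and its trace on $F$ isolates $\|\bm{n}\times\jump{\Delta_h\bm{u}_h}\|_F^2$ through norm equivalence. However, $\curl{\bm{\zeta}}$ on $F$ need not vanish, so the normal-flux term $(\jump{\nabla_h\bm{u}_h\bm{n}},\curl{\bm{\zeta}})_F$ survives. I would remove it by adding a correction $\lambda_Fb_F^2\bm{q}$, which vanishes on $F$ but has prescribed normal derivative there; $\bm{q}$ is chosen so that $\curl{\bm{\zeta}}|_F$ is $L^2(F)$-orthogonal to the polynomial $\jump{\nabla_h\bm{u}_h\bm{n}}$, which fixes the tangential components of $\bm{n}\times\bm{q}$ on $F$ and hence, by norm equivalence, gives $\|\bm{q}\|\le C(p)h_F^{-1}\times(\cdots)$. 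The cell residuals on $K_1,K_2$ are then controlled by \eqref{eq:error-cell-residual}, and scaling gives the $(h_F/p)^3$ weight.

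For \eqref{eq:error-normal-flux}, I would use the symmetric construction. Take $\bm{\zeta}$ vanishing on $F$ with $\curl{\bm{\zeta}}|_F$ reproducing the tangential part of $\jump{\nabla_h\bm{u}_h\bm{n}}$. The normal component of $\jump{\nabla_h\bm{u}_h\bm{n}}$ must be handled separately. Since $\nabla\cdot\bm{u}_h=0$, it equals minus the jump of tangential derivatives of the tangential components, and it is therefore controlled by $\bm{n}\times\jump{\bm{u}_h}$ via an inverse estimate. This explains the appearance of $\eta_{K,{\rm sta}}$ in \eqref{eq:error-normal-flux}.

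I expect the main obstacle to be this $C^1$ face-bubble construction. One must prescribe $\bm{\zeta}$ and $\partial_n\bm{\zeta}$ on $F$ independently, keep $\bm{\zeta}\in\bm{H}^2_0(\omega_F)$, and satisfy the orthogonality conditions with polynomial scaling bounds. A secondary difficulty is disentangling the normal component of the flux jump, which $\curl{\bm{\zeta}}$ cannot see cleanly and which has to go through the divergence-free constraint.
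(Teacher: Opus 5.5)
There is a genuine gap: the orthogonality correction you propose for \eqref{eq:error-delta-uh} cannot remove the normal-flux pairing, and the $\bm{H}^2_0$ face test functions are not actually constructed. Your arguments for \eqref{eq:error-cell-residual} and \eqref{eq:error-tangential} agree with the paper. So does your split of \eqref{eq:error-normal-flux} into a tangential part, tested by a field vanishing on $F$, and a normal part handled through $\nabla{\cdot}\bm{u}_h=0$ plus an inverse estimate.

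The correction fails for the following reason. Since $\lambda_F b_F^2\bm{q}$ vanishes on $F$, its curl there is $\nabla\lambda_F{\times}(b_F^2\bm{q})$, which is tangential because $\nabla\lambda_F$ is parallel to $\bm{n}$. The normal component $\bm{n}{\cdot}\curl{\bm{\zeta}}|_F$ is the surface curl of the tangential trace of $b_F^2E(\bm{n}{\times}\jump{\Delta_h\bm{u}_h})$ on $F$. That trace is fixed by the very requirement of isolating $\bm{n}{\times}\jump{\Delta_h\bm{u}_h}$, and its surface curl is generically nonzero. It pairs with $\bm{n}{\cdot}\jump{\nabla_h\bm{u}_h\bm{n}}=-\tdiv_\Gamma\jump{\bm{u}_h}$, and no choice of $\bm{q}$ affects this pairing. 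This is the mirror image of your own remark that a field vanishing on $F$ has a curl blind to the normal flux component. If $\bm{n}{\times}\jump{\nabla_h\bm{u}_h\bm{n}}=\bm{0}$ while $\tdiv_\Gamma\jump{\bm{u}_h}\neq 0$, the orthogonality cannot be achieved at all. If the tangential part is merely small, the required $\bm{q}$ is not controlled by $\|\bm{n}{\times}\jump{\Delta_h\bm{u}_h}\|_F$, so your norm-equivalence bound on $\|\bm{q}\|$ is false.

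Annihilating only the tangential part of the pairing is feasible, with norm equivalence controlling $\bm{q}$. The paper does not try to annihilate anything: it keeps the flux term as $S_3$ in \eqref{eq:deltauh-face} and bounds it with the already proven \eqref{eq:control-nt}. In either version the normal part can only be estimated through \eqref{eq:control-nn}. That adds an $\eta_{K,{\rm sta}}$ contribution to the right-hand side, as in \eqref{eq:error-normal-flux}. The paper's $S_3$ is written with the tangential part $\jump{\bm{n}{\times}(\nabla_h\bm{u}_h\bm{n})}$ only, which is why no such term appears there.

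Second, the $\bm{H}^2_0$ face test functions, which you identify as the main obstacle, are never built, and the ingredients you name do not produce them. A barycentric coordinate vanishing on $F$, taken elementwise, is nonnegative on both sides of $F$ and hence kinked there. So $\lambda_F b_F^2\bm{q}$ has the two-valued gradient $b_F^2\bm{q}\otimes\nabla\lambda_F$ on $F$.

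The paper instead uses a single affine function $b_\ell$ with $b_\ell|_F=0$ and $\bm{n}_F{\cdot}\nabla b_\ell=h_F^{-1}$. The gradient of $b_\ell b_{\widetilde F}^2\bm{r}$ on $F$ is then the single-valued field $b_{\widetilde F}^2\bm{r}\otimes\nabla b_\ell$ for any continuous bubble. This settles the test function for \eqref{eq:error-normal-flux}.

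For \eqref{eq:error-delta-uh}, where the test function does not vanish on $F$, your claim $b_F^2E(\cdot)\in\bm{H}^2_0(\omega_F)$ fails for the standard face bubble on $K_1\cup K_2$. Its one-sided normal derivatives on $F$ differ while $b_F\neq 0$ there. The paper works on a kite $\widetilde{K}\subset K_1\cup K_2$ symmetric with respect to $F$, following \cite{Dong21}. Whatever bubble is used must have a single-valued gradient on $F$, and no piecewise product of barycentric coordinates does: its one-sided normal derivatives on $F$ never coincide, even under symmetry. A genuinely $C^1$ bubble is therefore needed, e.g.\ a single polynomial on a convex kite vanishing on $\partial\widetilde{K}$.
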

\begin{proof}
	The proof is postponed to Section \ref{sec: proof lower bound}.
\end{proof}

\begin{remark}[$p$-dependence of the local lower error bound]
We emphasize that we do not track the explicit $p$-dependence of the localized lower error bounds. Indeed, these bounds follow from straightforward adaptations of the arguments developed in~\cite{Dong21} for dG methods. Moreover, they exhibit a pessimistic algebraic $p$-suboptimality compared to the numerical observations, since their proof relies on $C^1$ bubble functions and $H^2$ extension operators, for which the $hp$-techniques developed in~\cite{Melenk01} for $H^1$ functions are not applicable. Deriving sharp $p$-explicit local lower error bounds remains an open problem and is left for future work.
\end{remark}

\section{Numerical experiments}\label{sec:Numerical example}
In this section, we illustrate the performance of the proposed scheme on various tests in both 2D and 3D, including smooth and nonsmooth solutions. An adaptive scheme is also presented and tested in this section, in both two and three dimensions.

The numerical experiments are conducted with the
\texttt{ngsolve} library \cite{Schoberl14}. Unless otherwise specified, we compute the convergence rate by taking the average of the last three points in each figure. In this section,  we consider ``error'' and ``estimator'' defined by
$
    \texttt{error}=\{\|\mu^{\frac12}\nabla_h(\bm{u}-\bm{u}_h)\|_{\Omega}^2
+\mu
\sum_{F\in \Fall} \left(\frac{p^2}{h_F}\right)\|\bm{n}{\times} \jump{\bm{u}_h}\|_{F}^2\}^{\frac12},
$
and $  \texttt{estimator}: = \eta$.
In addition, in order to illustrate the efficiency of the error estimators, we consider the following effectivity index defined by the ratio between the estimator and error:
\begin{equation}
    \texttt{effectivity index} = \frac{\texttt{estimator}}{\texttt{error}}.
\end{equation}

Numerical tests are conducted on both BDM and RT basis, but we only present the results with RT basis here, since our numerical analysis focuses mainly on RT basis, and results for BDM basis is eventually similar to those of RT basis.

\subsection{Smooth test: uniform refinements}
We consider the analytic solution in the 2D domain $(0,1)^2$ given by
\begin{equation}\label{eq_smooth_test_2d}
    \psi(x,y)=(1-\cos(2\pi x))(1-\cos(2\pi y)),\quad \bm{u}(x,y)=(\partial_y \psi, -\partial_x \psi)^{\top},\quad p(x,y) = \sin(2\pi x)\cos(2\pi y).
\end{equation}

In Figure \ref{fig:RT_2D}, we illustrate the convergence curve for various $h$, $p$ and $\mu$ for test case \eqref{eq_smooth_test_2d}, for errors and estimators. More precisely, in Figures \ref{fig:RT_2D}, $h$-refinement tests are presented in the left and middle panels, for $p\in \{1,2,3,4,5,6\}$ with $\mu=1$ and $\mu\in \{10^4,10^2,10^{-2},10^{-4}\}$ with $p=2$, respectively; $p$-refinement is consider in the right panels, where errors and estimators for $h=2{\times}10^{-1}$ and $\mu=1$ are presented.




\begin{figure}
    \centering
    \includegraphics[width=0.3\linewidth]{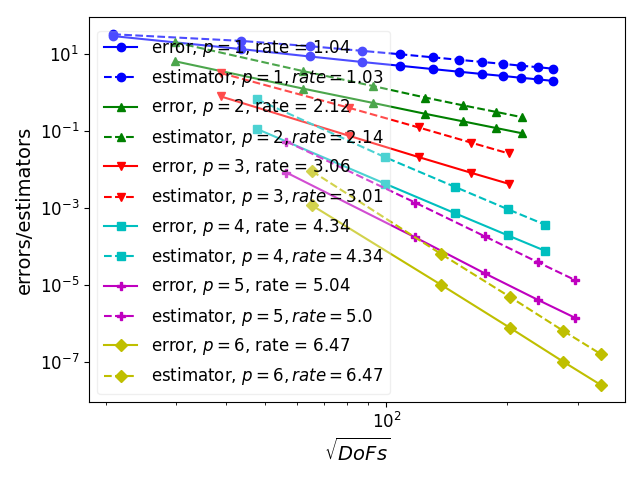}
    \includegraphics[width=0.3\linewidth]{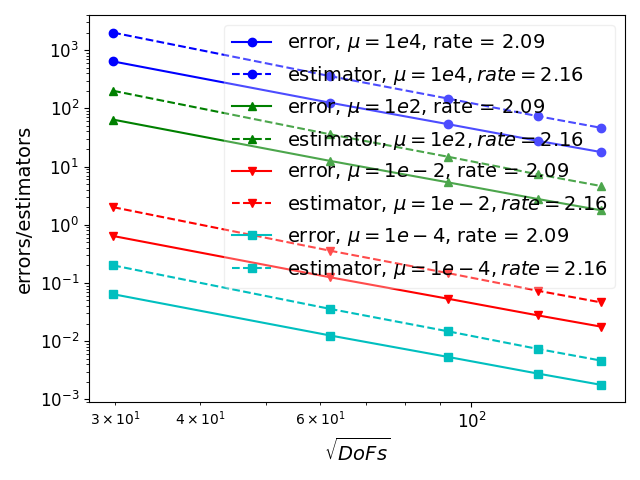}
    \includegraphics[width=0.3\linewidth]{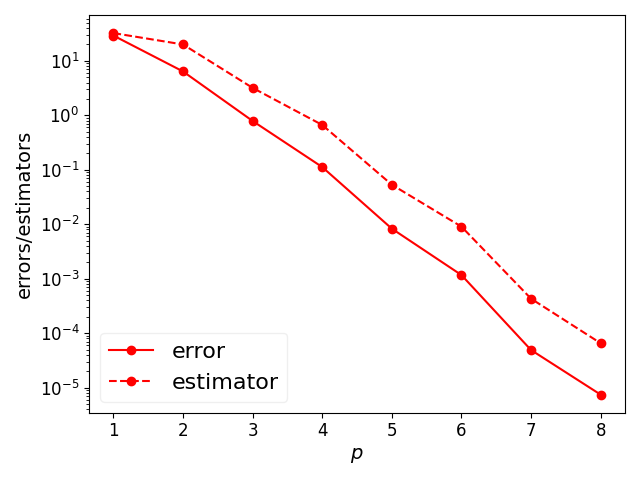}

    \includegraphics[width=0.3\linewidth]{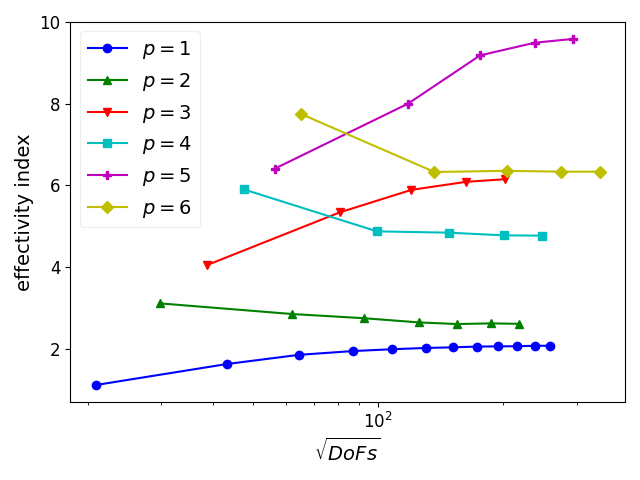}
    \includegraphics[width=0.3\linewidth]{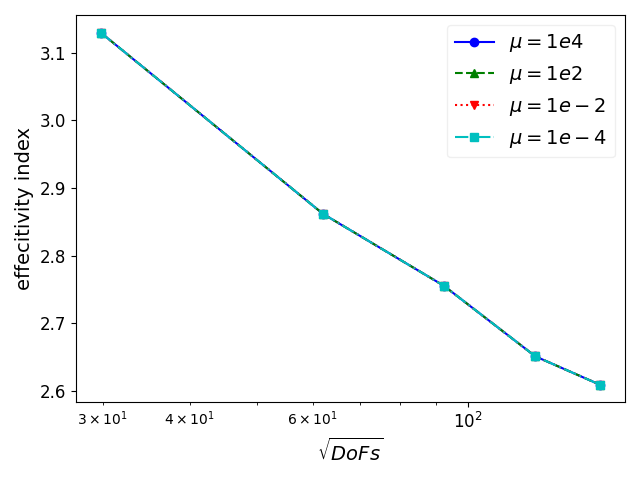}
    \includegraphics[width=0.3\linewidth]{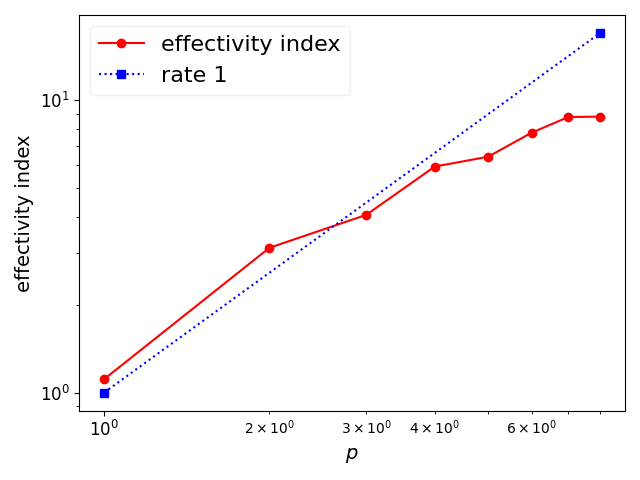}
    \caption{Accuracy test on \eqref{eq_smooth_test_2d}: left: $\mu=1$; middle: $p=2$; right: $\mu=1$, $h=2{\times}10^{-1}$.}
    \label{fig:RT_2D}
\end{figure}

We have the following observations from Figure \ref{fig:RT_2D}: 1. The optimal convergence rate $\mathcal{O}(\mathrm{DoFs}^{-\frac{p}{2}})$ is attained under $h$-refinement.  2. For each fixed polynomial degree $p$, the effectivity index remains asymptotically bounded as $h\to0$. Interestingly, the effectivity indices for odd and even polynomial degrees exhibit different asymptotic behaviours.  In addition, similar behaviour is observed for other manufactured smooth solutions. 3. The pressure-robustness of both the error and the estimator is clearly observed in all discrete settings considered.  4. Under $p$-refinement, exponential convergence is observed for both of the error and the estimator. Moreover, the corresponding effectivity index grows like $\mathcal{O}(p^{\alpha})$ with $\alpha\approx1$.

\subsection{Adaptive scheme}

We consider here an adaptive algorithm
with the usual structure
\begin{equation}\label{eq_adaptive_scheme}
    \textbf{SOLVE}
    \qquad \Longrightarrow \qquad
    \textbf{ESTIMATE}
    \qquad \Longrightarrow \qquad
    \textbf{MARK}
    \qquad \Longrightarrow \qquad
    \textbf{REFINE}.
\end{equation}
This structure is iteratively applied until either a given tolerance for $\eta$ is achieved or a given maximum iteration number $N_{\rm{iter}}$ is attained.
The \textbf{ESTIMATE} step is driven by using the error estimator~$\eta$;
to this aim, we propose an algorithm based on the local error estimator $\eta_K$ on each mesh cell $K\in \mesh$. More precisely, for each cell $K\in\mesh$, we consider the local error estimator $\eta_K$ denoted the local error contribution of the global estimator $\eta$ in \eqref{def: estimator}  on cell  $K$ such that $\eta_K^2:=(\eta|_K)^2$.
As for the \textbf{MARK} step, we use D\"orfler’s marking
with a given threshold $\theta_{\rm{refine}}$ in~$(0,1]$.
The \textbf{REFINE} step is realized by the default refine strategy of the \texttt{ngsolve} library.

\subsection{Nonsmooth test: singular $L$-shaped domain in 2D}
To demonstrate the performance of the proposed scheme in the low regularity setting, we consider
a Stokes problem on the L-shaped domain $\Omega=(-1,1)^2 \backslash [0,1]{\times} [-1,0]$ with the exact solution in polar coordinates defined by
\begin{equation}\label{eq_nonsmooth_test_Lshape}
	\bm{u}(r, \theta)=\left(
	\begin{array}{r@{\;\;}l}
		r^{\lambda}((1+\lambda)\sin(\theta)\psi(\theta)+
		\cos(\theta)\psi'(\theta)) \\
		r^{\lambda}(-(1+\lambda)\cos(\theta)\psi(\theta)+
		\sin(\theta)\psi'(\theta))
	\end{array}
	\right), \quad p=\mu p_1+p_2,
\end{equation}
where $p_1(r, \theta)=- r^{\lambda-1}((1+\lambda)^2\psi'(\theta)
	+\psi'''(\theta))/(1-\lambda)$, $p_2=x^3+y^3$
and
\begin{eqnarray*}
	\psi(\theta)=\sin((1+\lambda)\theta)\cos(\lambda\omega)/(1+\lambda)
	-\cos((1+\lambda)\theta)
	-\sin((1-\lambda)\theta)\cos(\lambda\omega)/(1-\lambda)+
	\cos((1-\lambda)\theta),
\end{eqnarray*}
with $\lambda\approx0.54448373678246$ and $\omega=3\pi/2$, note that $-\Delta \bm{u}+\nabla p_1=0$, thus $-\mu\Delta \bm{u}+\nabla p=\bm{f}$, where $\bm{f}=\nabla p_2$. In addition, it holds $(\bm{u}, p)\in \bm{H}^{1+\lambda}(\Omega){\times} H^{\lambda}(\Omega)$. To illustrate the pressure-robust feature of the proposed scheme, we fix $\mu=10^{-3}$ in this test. We note that this test has a corner singularity for pressure at the corner $(0,0)$, which is expected to be captured by adaptive schemes.

In Figure \ref{fig:mesh_Lshape}, we illustrate the pressure-robust feature of the proposed estimators by considering two meshes produced through our adaptive algorithm \eqref{eq_adaptive_scheme} with our estimator \eqref{def: estimator} and a non pressure-robust estimator
\begin{align*}
    \tilde{\eta}^2 = \sum_{K\in \mesh}\left\{\mu^{-1}\left( \frac{h_K}{p}\right)^2\|\bm{f} + \mu \Delta \bm{u}_h - \nabla p_h\|_{K}^2 + \eta_{K,\rm{nor}}^2 + \eta_{K,\rm{sta}}^2 + \eta_{K,\rm{tan}}^2 \right\},
\end{align*}
respectively. The initial mesh is constructed by the default mesh generator of \texttt{ngsolve} with $h=0.4$ and $\theta_{\rm{refine}}=0.3$. Maximum iteration number is $N_{\rm{iter}}=5$.

\begin{figure}[htb]
    \centering
    \includegraphics[width=0.35\linewidth]{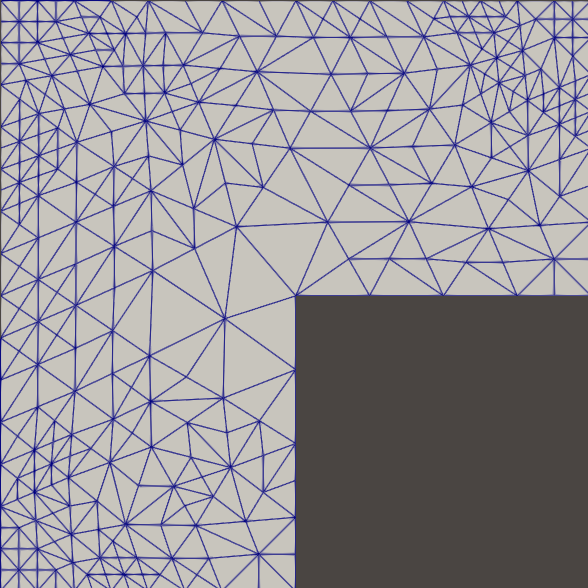}\qquad
    \includegraphics[width=0.35\linewidth]{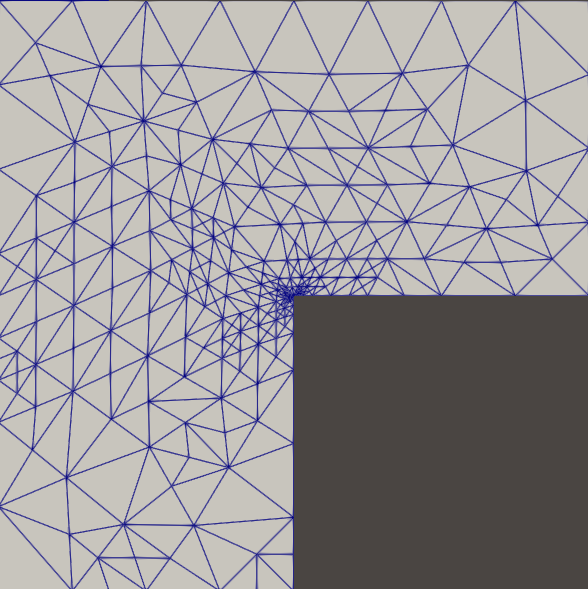}
    \caption{Meshes produced by our adaptive scheme \eqref{eq_adaptive_scheme} on test case \eqref{eq_nonsmooth_test_Lshape}, with $\theta_{\rm{refine}}=0.3$, $p=1$, $N_{\rm{iter}}=5$: left: non pressure-robust estimator; right: pressure-robust estimator \eqref{def: estimator}.}
    \label{fig:mesh_Lshape}
\end{figure}
We observe from Figure \ref{fig:mesh_Lshape} that the proposed estimator \eqref{def: estimator} successfully captures the corner singularity in pressure, whereas the non-pressure robust estimator  fails to refine near the corner singularity. This verifies the pressure-robust feature of our a posteriori analysis.

Then, in Figure \ref{fig:rate_Lshape}, we test the adaptive scheme with estimator \eqref{def: estimator} on the singular test case \eqref{eq_nonsmooth_test_Lshape}. The initial mesh is set as a quasi-uniform mesh with $h=0.4$, generated by the default mesh generator of \texttt{ngsolve}. Errors and estimators are presented in the figure; for comparison, errors measured on uniform refinement with $p=1$ are also illustrated.

\begin{figure}
    \centering
    \includegraphics[width=0.48\linewidth]{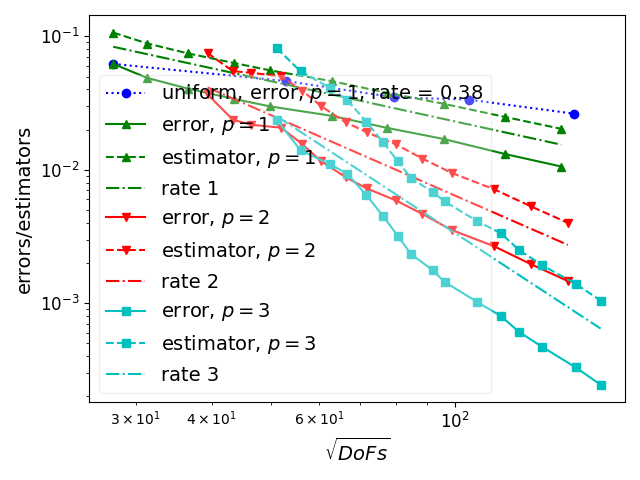}
    \caption{Convergence study for adaptive scheme \eqref{eq_adaptive_scheme} with estimator \eqref{def: estimator} on test case \eqref{eq_nonsmooth_test_Lshape}, with $\theta_{\rm{refine}}=0.3$, $p\in\{1,2,3\}$.}
    \label{fig:rate_Lshape}
\end{figure}

From Figure \ref{fig:rate_Lshape}, we have the following observations: 1. the uniform refinement leads to a slow convergence, whereas adaptive refinement has better convergence rate; 2. expected optimal convergence rates $\mathcal{O}({\rm{DoFs}}^{-\frac{p}{2}})$ are attained, in the entire range of $p\in\{1,2,3\}$. Therefore, the optimal convergence behavior is numerically justified on the $L$-shape domain singular test, for the adaptive algorithm associated with our error estimator \eqref{def: estimator}.

\subsection{Nonsmooth test: corner singularity in 3D}
We consider a corner singularity test on the unit cube $(0,1)^3$ here, with a singularity located at $(0,0,0)$ for pressure, as a 3D version of the $L$-shape domain example \eqref{eq_nonsmooth_test_Lshape}.
The exact solution is defined by
\begin{equation}\label{eq_nonsmooth_3D}
    \bm{u}(x,y,z) = A p_1 \left(
         x,
         y,
         z\right)^{\top} + B r^2 \nabla p_1, \quad p= \mu p_1 + p_2,
\end{equation}
where
\begin{align*}
    &p_1 = \rho^\lambda \cos(\lambda \theta),\quad p_2 =x^2+y^2+z^2,\quad \rho^2= s^2 +z^2,\quad r^2=x^2+y^2+z^2, \quad \theta = \text{atan2}(z,s),\\
    &s=\frac{x+y}{\sqrt{2}},\quad A=\frac{-\lambda}{(\lambda+1)(2\lambda+3)}, \quad B=\frac{\lambda+3}{2(\lambda+1)(2\lambda+3)},\quad \lambda=-\frac{4}{5},\quad \mu = 10^{-3}.
\end{align*}
We have the following remarks on the the exact solution constructed above: 1. it satisfies $(\bm{u},p)\in \bm{H}^{\frac{5}{2}+\lambda-\epsilon}(\Omega){\times} H^{\frac{3}{2}+\lambda-\epsilon}(\Omega)$ for arbitrarily small $\epsilon>0$; 2. $-\mu\Delta \bm{u} +\nabla p_1 = 0$; 3. $\mu$ is set small as in \eqref{eq_nonsmooth_test_Lshape} to illustrate the pressure-robust feature of our error estimator.

In Figure \ref{fig:rate_3D}, we present the numerical results for the corner singularity test \eqref{eq_nonsmooth_3D}, conducted by an adaptive algorithm with $\theta_{\rm{refine}}=0.2$, initial mesh generated by the default mesh generator of \texttt{ngsolve} with $h=0.2$.

\begin{figure}[htb]
    \centering
    \includegraphics[width=0.4\linewidth]{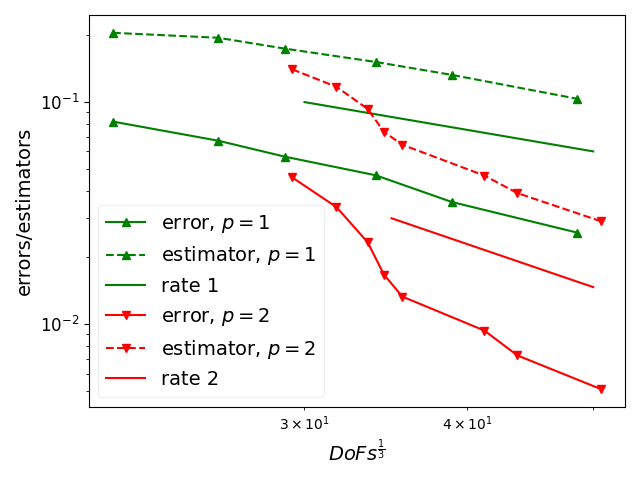}
    \includegraphics[width=0.28\linewidth]{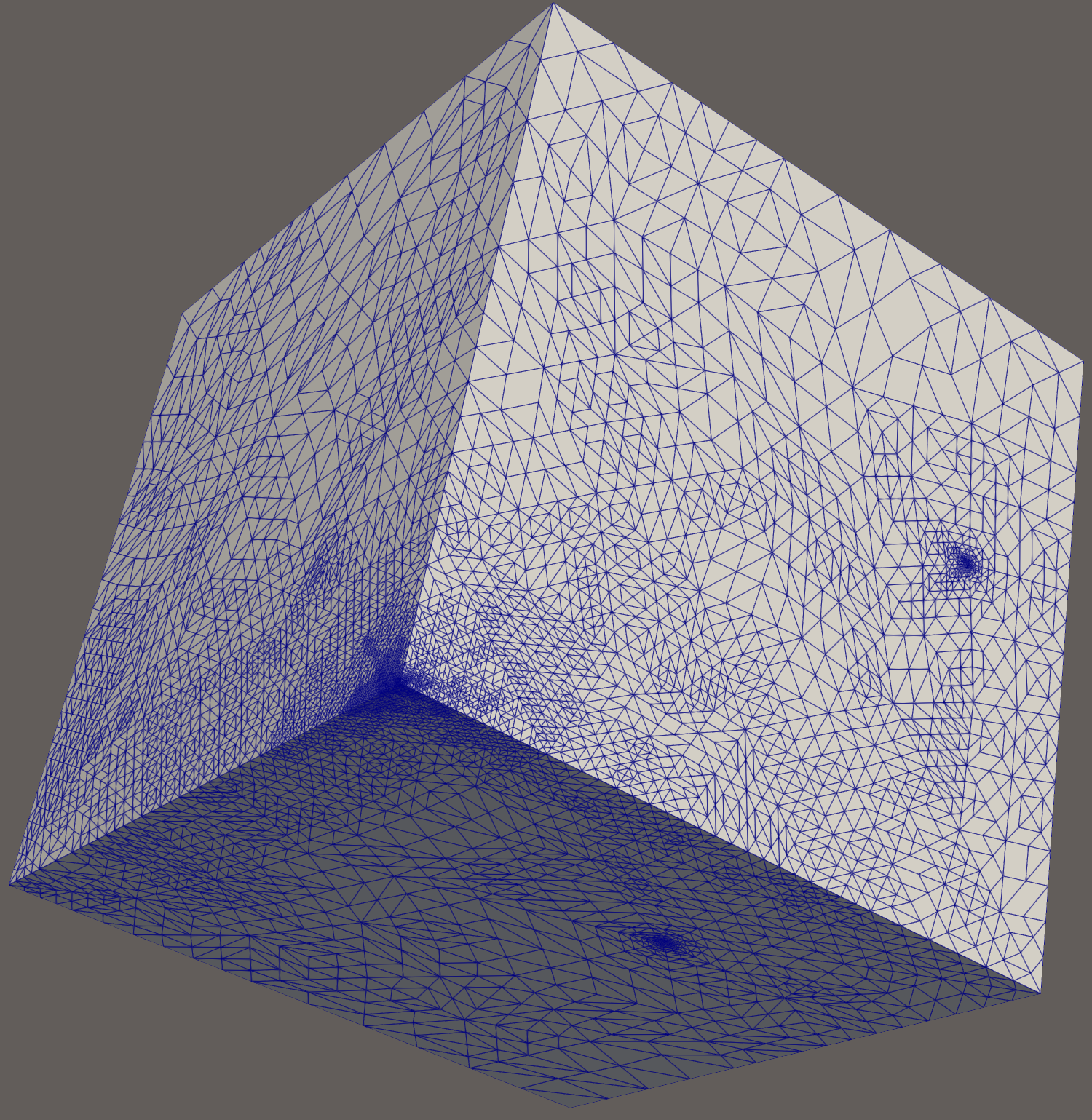}
    \includegraphics[width=0.28\linewidth]{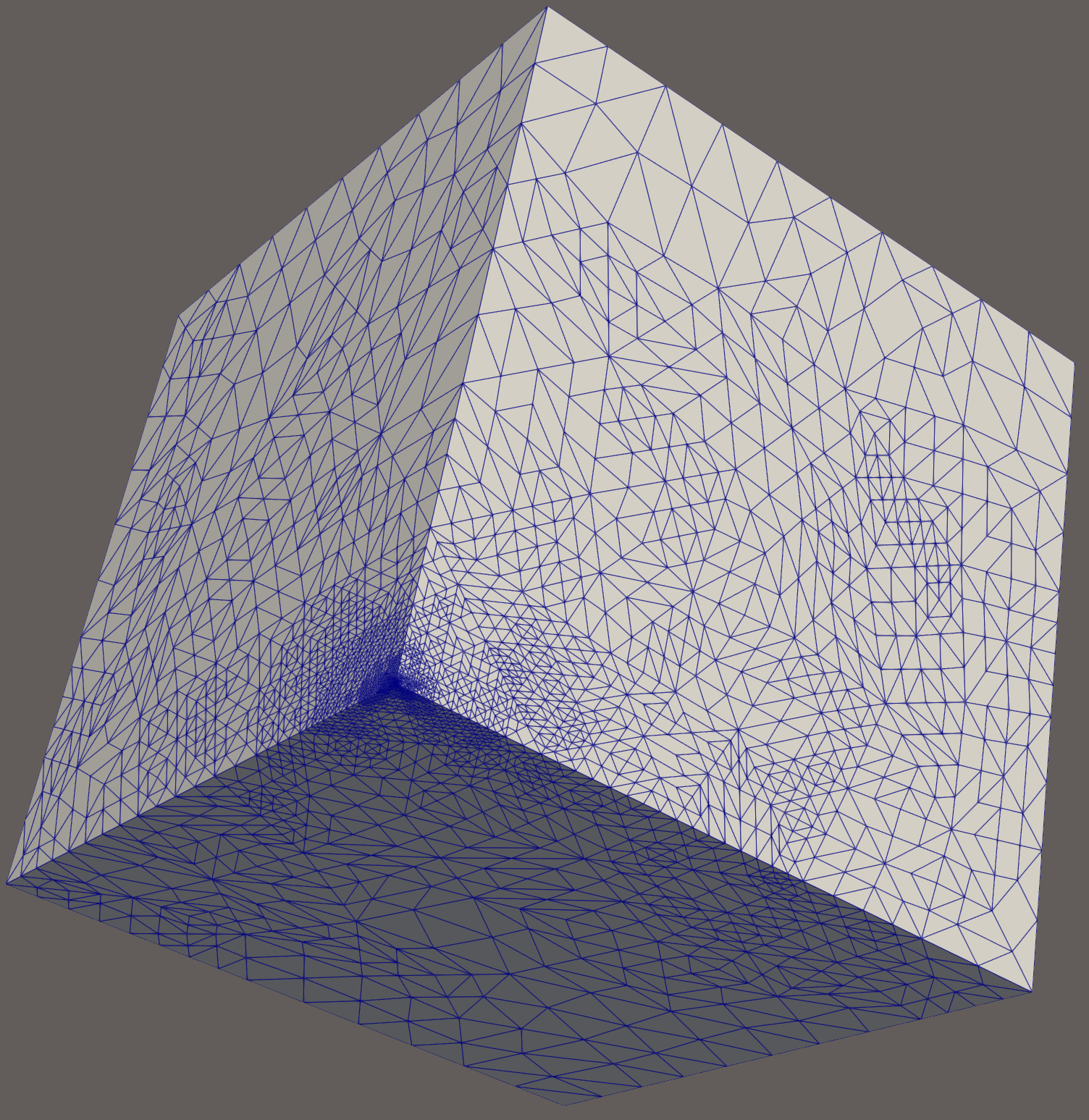}
    \caption{Convergence study for adaptive scheme \eqref{eq_adaptive_scheme} with estimator \eqref{def: estimator} on 3D corner singularity \eqref{eq_nonsmooth_3D}, with $\theta_{\rm{refine}}=0.2$, $p\in\{1,2\}$: left: convergence test; middle: last mesh for $p=1$; right: last mesh for $p=2$.}
    \label{fig:rate_3D}
\end{figure}

We observe from Figure \ref{fig:rate_3D} that the expected convergence rates $\mathcal{O}({\rm{DoFs}}^{-\frac{p}{3}})$  are obtained for both $p=1$ and $p=2$ cases, and meshes are mainly refined near the corner singularity. In addition, a stronger mesh grading phenomenon is observed for $p=2$ than $p=1$, which suggests the usage of high-order adaptive method \eqref{eq_adaptive_scheme} with estimator \eqref{def: estimator} for model problems with local singularity.

\section{Technical proofs}\label{sec:proofs}
This section contains the proofs of Lemmas \ref{lemma: dual norm of residual}, \ref{lemma: nonconforming error total} and Theorem \ref{theorem: lower bound}.

\subsection{Proof of Lemma \ref{lemma: dual norm of residual}}\label{sec: proof dual norm of residual}

In this section, we prove Lemma \ref{lemma: dual norm of residual}. Specifically,
we prove in Section~\ref{sec: proof BS interpolation} that,
\begin{equation} \label{eq:first_bnd_res}
\mu^{-1}\|\mathcal{R}_{\mesh}\|_{\bm{H}^{-1}(\Omega)}^2 \leq C(\Omega) \sum_{K\in\mesh}   \left\{
p \eta_{K,{\rm sta}}^2 +\eta_{K,{\rm nor}}^2+\eta_{K,{\rm tan},\Delta}^2+ \eta_{K,{\rm res}}^2+ \mathcal{O}(\bm{f})^2_K\right\}.
\end{equation}
and we prove in Section~\ref{sec: proof Nedelec interpolation} that
\begin{equation} \label{eq:second_bnd_res}
\mu^{-1}\|\mathcal{R}_{\mesh}\|_{\bm{H}^{-1}(\Omega)}^2 \leq C(\Omega) \sum_{K\in\mesh} p^2\left\{
 \eta_{K,{\rm sta}}^2 + \eta_{K,{\rm nor}}^2+ \mathcal{O}(\bm{f})^2_K\right\}.
\end{equation}
The combination of~\eqref{eq:first_bnd_res}--\eqref{eq:second_bnd_res} readily gives~\eqref{eq:dual_res1}.

Recalling the definition~\eqref{eq:def_res_dual} of the dual residual norm $\|\mathcal{R}_{\mesh}\|_{\bm{H}^{-1}(\Omega)}$, we need to bound $(\mu\nabla_{h} \bm{e}, \nabla \bm{v})_{\Omega}$ for all $\bm{v}\in \bm{H}^1_0(\Omega)\cap \bm{H}(\tdiv^0; \Omega)$.
First, using the weak forms  \eqref{eq:weak-div-free} and \eqref{eq:def-ah-lifting}, we have
\begin{align}
(\mu\nabla_{h} \bm{e}, \nabla \bm{v})_{\Omega}=
(\bm{f},\bm{v})_{\Omega} -  (\mu\nabla_{h} \bm{u}_h, \nabla \bm{v})_{\Omega}
=
(\bm{f},\bm{v})_{\Omega} -  a_h(\bm{u}_h, \bm{v}) + (\mu\mathcal{L}(\bm{u}_h),  \nabla \bm{v})_{\Omega}
.\label{eq:I-term}
\end{align}
Next, using the  discrete scheme \eqref{eq:discrete div-free}, we have for all $\bm{v}_{h} \in \bm{V}_{h0}^p\cap \bm{H}(\tdiv^0;\Omega)$
\begin{align}
(\bm{f},\bm{v}_{h})_{\Omega} -  a_h(\bm{u}_h, \bm{v}_{h})=0.\label{eq:discrete-error}
\end{align}
Combining the above two bounds, we have
\begin{equation}\label{def: err equation}
(\mu\nabla_{h} \bm{e}, \nabla \bm{v})_{\Omega}
= (\bm{f},  \bm{v}-  \bm{v}_h)_{\Omega} -
a_h(\bm{u}_h,  \bm{v}-\bm{v}_h)
+(\mu\mathcal{L}(\bm{u}_h),  \nabla \bm{v})_{\Omega}.
\end{equation}
We need now to select a suitable discrete test function $\bm{v}_{h} \in \bm{V}_{h0}^p\cap \bm{H}(\tdiv^0;\Omega)$.

\subsubsection{Bound using the Babu\v{s}ka--Suri interpolation operator} \label{sec: proof BS interpolation}

Using the generalized Bogovskiĭ operator \eqref{eq: Bogovskii operator}, we know there exists $\bm{\zeta}\in \bm{H}^2_0(\Omega)$ such that
\begin{equation}
\bm{v} = \curl{ \bm{\zeta}}.\label{eq: bound 1}
\end{equation}
Let $\mBS^p (\bm{\zeta})$ be the modified Babu\v{s}ka-Suri-operator defined in Lemma~\ref{lemma: Global BS} for each component of $\bm{\zeta}$. As $\mBS^p (\bm{\zeta})\in  \bm{P}_{p}(\mesh)\cap \bm{H}^1_{0}(\Omega)$, choosing $\bm{v}_{h} = \curl{ \mBS^p (\bm{\zeta})} \in \bm{V}_{h0}^p\cap \bm{H}(\tdiv^0;\Omega)$ in \eqref{def: err equation} and denoting  $\bm{\eta} =\bm{\zeta} -  \mBS^p (\bm{\zeta})$, with $\bm{\eta} \in \bm{H}^1_0(\Omega)\cap \bm{H}^2(\mesh)$, we have
\begin{align*}
(\mu\nabla_{h} \bm{e}, \nabla \bm{v})_{\Omega}&= (\bm{f},\curl{\bm{\eta}} )_{\Omega} -  a_h(\bm{u}_h,\curl{\bm{\eta}})
+(\mu\mathcal{L}(\bm{u}_h),  \nabla \curl{ \bm{\zeta}})_{\Omega}   .
\end{align*}
An integration by parts  implies
\begin{equation}
	\begin{split}
(\mu\nabla_{h} \bm{e}, \nabla \bm{v})_{\Omega}
=& \sum_{K\in \mesh}  (\bm{f} +\mu\Delta \bm{u}_h, \curl {\bm{\eta}})_K - \sum_{F\in \Fint} \mu (\jump{\nabla_h \bm{u}_h \bm{n}} , \{\curl{ \bm{\eta}} \})_F \\
&- \sum_{F\in \Fall} \mu (\{ \nabla_h\bm{u}_h \bm{n}\} , \jump{\curl {\bm{\eta}}})_F  -(\mu\mathcal{L}(\curl {\bm{\eta}}), \nabla_h \bm{u}_h)_{\Omega}
\\
&+(\mu\mathcal{L}(\bm{u}_h),  \nabla_h (\curl{\mBS^p (\bm{\zeta})}))_{\Omega}
-\sum_{F\in \Fall} ( \sigma \mu \bm{n}{\times}\jump{\bm{u}_h}, \bm{n}{\times}\jump{\curl{\bm{\eta}}})_F.
	\end{split}
\label{eq:astokes 1}
\end{equation}
Since $\curl{\bm{\zeta}}\in \bm{H}^1_0(\Omega)\cap \bm{H}(\tdiv^0;\Omega)$ and $\curl  {\mBS^p (\bm{\zeta})} \in \bm{V}_{h0}^p\cap \bm{H}(\tdiv^0;\Omega)$, we have $\jump{\curl {\bm{\eta}}}= -\jump{\curl {\mBS^p (\bm{\zeta})}} $, and also the identity
$$\sum_{F\in \Fall} \mu (\{ \nabla_h \bm{u}_h  \bm{n}\} , \jump{\curl {\bm{\eta}}})_F
= -\sum_{F\in \Fall} \mu (\{ \bm{n}{\times} (\nabla_h\bm{u}_h\bm{n})\} , \bm{n}{\times}  \jump{\curl {\mBS^p (\bm{\zeta})}})_F.
$$

Next, using the definition of lifting operator \eqref{def: lifting operator} and the fact $\mathcal{L}(\nabla {\times} \bm{\eta}) = - \mathcal{L}(\nabla {\times}\mBS^p (\bm{\zeta}))$, we have
\begin{equation}
	\begin{split}
(\mu\nabla_{h} \bm{e}, &\nabla \bm{v})_{\Omega}
= \sum_{K\in \mesh}  (\bm{f} +\mu\Delta \bm{u}_h, \curl {\bm{\eta}})_K  - \sum_{F\in \Fint} \mu (\jump{ \nabla_h \bm{u}_h \bm{n} }, \{\curl {\bm{\eta}} \})_F \\
&+(\mu\mathcal{L}(\bm{u}_h),  \nabla_h (\curl{\mBS^p (\bm{\zeta})}))_{\Omega}
-\sum_{F\in \Fall} ( \sigma \mu \bm{n}{\times}\jump{\bm{u}_h}, \bm{n}{\times}\jump{\curl {\bm{\eta}}})_F \\
=&   \!\!\sum_{K\in \mesh} (\curl{ (\bm{f} +\mu  \Delta \bm{u}_h)},  \bm{\eta})_K  -\!\! \sum_{F\in \Fint} \mu (\bm{n}{\times}\jump{\Delta_h \bm{u}_h},\bm{\eta})_F
- \sum_{F\in \Fint} \mu (\jump{ \nabla_h \bm{u}_h\bm{n}} , \{\curl{\bm{\eta}} \})_F
\\
&
+(\mu\mathcal{L}(\bm{u}_h),  \nabla_h (\curl{\mBS^p (\bm{\zeta})}))_{\Omega}
\!\!-\sum_{F\in \Fall} ( \sigma \mu \bm{n}{\times}\jump{\bm{u}_h}, \bm{n}{\times}\jump{(\curl {\bm{\eta}})})_F .	\end{split}
\label{eq:astokes 2}
\end{equation}
Invoking the Cauchy-Schwartz inequality, we infer
\begin{equation*}
\begin{split}
|(\mu\nabla_{h} \bm{e}, \nabla \bm{v})_{\Omega}|
\leq &\sum_{K\in \mesh}\bigg\{
\left( \frac{h_K}{p}\right)^2\|\curl{(\bm{f}+ \mu \Delta \bm{u}_h)}\|_{K} \left( \frac{h_K}{p}\right)^{-2}\|\bm{\eta}\|_{K}
\\
&+
\sum_{F\in \dK \cap \Fint}
\mu \left( \frac{h_F}{p}\right)^{\frac32} \|\bm{n}{\times}\jump{ \Delta_h \bm{u}_h}\|_{F} \left( \frac{h_F}{p}\right)^{-\frac32} \|\bm{\eta}\|_{F} \\
&+
\sum_{F\in \dK \cap \Fint}
\mu\left( \frac{h_F}{p}\right)^{\frac12} \|\jump{ \nabla_h \bm{u}_h \bm{n}}\|_{F} \left( \frac{h_F}{p}\right)^{-\frac12} \|\curl{\bm{\eta}}\|_{F} \\
& + \mu \|\mathcal{L}(\bm{u}_h)\|_{\Omega} \|\nabla_h (\curl{\mBS^p (\bm{\zeta})}) \|_{\Omega}
+
\sum_{F\in \dK}
\mu \|\sigma^{\frac12} \bm{n}{\times}\jump{\bm{u}_h}\|_F
\|\sigma^{\frac12} \bm{n}{\times}\jump{ \curl{\bm{\eta}}  }\|_F
\bigg\}.
\end{split}
\end{equation*}

Finally, using the approximation property \eqref{eq: hp-approximation BS}, the stability bounds \eqref{eq:lifting} and \eqref{def: penalty}, together with the estimate $| \bm{\zeta} |_{\bm{H}^2(\Omega)} \leq C(\Omega)  \| \nabla  \bm{v}\|_{\Omega}$ from \eqref{eq:stability-H2}, we obtain \eqref{eq:first_bnd_res},
\begin{equation}
	\begin{split}
|(\mu\nabla_{h} \bm{e}, \nabla \bm{v})_{\Omega}|
\leq C(\Omega)	\left\{ \sum_{K\in\mesh} \left\{
p \eta_{K,{\rm sta}}^2 +\eta_{K,{\rm nor}}^2+\eta_{K,{\rm tan},\Delta}^2+ \eta_{K,{\rm res}}^2+ \mathcal{O}(\bm{f})^2_K\right\}\right\}^{\frac12} \|\mu^{\frac12} \nabla  \bm{v}\|_{\Omega}.
\end{split}
\label{bounds I}
\end{equation}

\subsubsection{Bound using the N\'ed\'elec projection-based interpolation operator} \label{sec: proof Nedelec interpolation}

As $\bm{\zeta}\in \bm{H}^2_0(\Omega)$, let $\bm{I}_{\rm N}^{p} (\bm{\zeta})$ denote the N\'ed\'elec projection-based interpolation operator defined in Lemma~\ref{lemma: Global Nedelec}. Since $\bm{I}_{\rm N}^{p} (\bm{\zeta}) \in  \bm{N}_{p}(\mathcal{T}_h)\cap \bm{H}_0(\tcurl;\Omega)$, we choose
$\bm{v}_{h} = \curl{ \bm{I}_{\rm N}^{p} (\bm{\zeta}) }  = \bm{I}_{\rm RT}^{p}  (\curl{ \bm{\zeta}})\in \bm{V}_{h0}^p\cap \bm{H}(\tdiv^0;\Omega)$
in \eqref{def: err equation}. Denoting
$\bm{\eta} =\bm{\zeta} -  \bm{I}_{\rm N}^{p} (\bm{\zeta})$,
with $\bm{\eta} \in \bm{H}_0(\tcurl;\Omega)\cap \bm{H}^2(\mesh)$, and integrating by parts elementwise, we obtain
\begin{align*}
(\mu\nabla_{h} \bm{e}, \nabla \bm{v})_{\Omega}
&= \sum_{K\in \mesh}  (\bm{f} +\mu\Delta \bm{u}_h, \curl {\bm{\eta}})_K  - \sum_{F\in \Fint} \mu (\jump{ \nabla_h \bm{u}_h\bm{n} }, \{\curl {\bm{\eta}} \})_F \\
&+(\mu\mathcal{L}(\bm{u}_h),  \nabla_h (\curl{\bm{I}_{\rm N}^{p}  (\bm{\zeta})}))_{\Omega}
-\sum_{F\in \Fall} ( \sigma \mu \bm{n}{\times}\jump{\bm{u}_h}, \bm{n}{\times}\jump{\curl {\bm{\eta}}})_F.
\end{align*}
Since $\bm{u}_h|_K \in \bm{RT}_{p}(K)\subset \bm{P}_{p+1}(K)$, it follows that $\Delta \bm{u}_h|_K \in \bm{P}_{p-1}(K)$. Hence, invoking the orthogonality condition \eqref{eq:Nedelec-property4}, we obtain
$
(\bm{f} +\mu\Delta \bm{u}_h, \curl {\bm{\eta}})_K =  (\bm{f}, \curl {\bm{\eta}})_K
$
for all $K\in \mesh$.
Since $\bm{f}\in\bm{H}(\tcurl;\Omega)$ and $\bm{\eta} \in \bm{H}_0(\tcurl;\Omega)$, integrating by parts and invoking \eqref{eq:Nedelec-property3} yields
\begin{equation}\label{eq: integtation by parts curl}
\sum_{K\in \mesh}  (\bm{f} +\mu\Delta \bm{u}_h, \curl {\bm{\eta}})_K
=
(\bm{f}, \curl {\bm{\eta}})_{\Omega}
=
(\curl {\bm{f}},  \bm{\eta})_{\Omega}
=
\sum_{K\in \mesh}  ( \curl{\bm{f}} -\bm{\Pi}_K^{p-2} (\curl{\bm{f}}) ,  \bm{\eta})_K.
\end{equation}
Combining the above identities, we infer
\begin{equation}
	\begin{split}
(\mu\nabla_{h} \bm{e}, \nabla \bm{v})_{\Omega}
=&\sum_{K\in \mesh}  ( \curl{\bm{f}} -\bm{\Pi}_K^{p-2} (\curl{\bm{f}}) ,  \bm{\eta})_K  - \sum_{F\in \Fint} \mu (\jump{ \nabla_h \bm{u}_h  \bm{n} }, \{\curl {\bm{\eta}} \})_F \\
&+(\mu\mathcal{L}(\bm{u}_h),  \nabla_h (\curl{\bm{I}_{\rm N}^{p}  (\bm{\zeta})}))_{\Omega}
-\sum_{F\in \Fall} ( \sigma \mu \bm{n}{\times}\jump{\bm{u}_h}, \bm{n}{\times}\jump{\curl {\bm{\eta}}})_F
=:\sum_{j=1}^4T_j.
\end{split}
\label{eq:astokes 2 Nedelc}
\end{equation}
We will bound each terms $T_j$, $j=1,\dots,4$. Invoking the Cauchy-Schwartz inequality and  the approximation property \eqref{eq:Nedelec-hp} gives
\begin{equation}\label{bound  T1}
	\begin{split}
|T_1| \leq \!\!\!\sum_{K\in \mesh}  \| \curl{\bm{f}} -\bm{\Pi}_K^{p-2} (\curl{\bm{f}})\|_K  \| \bm{\eta}\|_K
& \leq  C \left\{\sum_{K\in \mesh}
 \left( \frac{h_K^2}{p}\right)^2  \| \curl{\bm{f}} -\bm{\Pi}_K^{p-2} (\curl{\bm{f}})\|_K^2\right\}^{\frac12}  \!\!| \bm{\zeta}|_{\bm{H}^2(\Omega)}.
\end{split}
\end{equation}
Next, we bound $\|\nabla (\curl{\bm{I}_{\rm N}^{p}  (\bm{\zeta})})\|_{K}$. Recalling the  property \eqref{eq:Nedelec-commut},
$
\curl{ \bm{I}_{\rm N}^{p} (\bm{\zeta})} = \bm{I}_{\rm RT}^{p}(\curl{ \bm{\zeta}}),
$
and using the triangle inequality, the inverse inequality \eqref{eq:inverse}, the identity $\bm{\Pi}_{K}^{p+1} \circ\bm{I}_{\rm RT}^{p} = \bm{I}_{\rm RT}^{p}$, the stability of the $L^2$-orthogonal projection \eqref{eq:L2projection-stability}, and the approximation property \eqref{eq:RT-hp}, we obtain
\begin{equation}\label{H1 stability of RT projection}
\begin{split}
\|\nabla &(\curl{\bm{I}_{\rm N}^{p}  (\bm{\zeta})})\|_{K}
\leq \|\nabla (\bm{I}_{\rm RT}^{p} (\curl{ \bm{\zeta}} ) -  \bm{\Pi}_{K}^{p+1}  (\curl{ \bm{\zeta}} ) )\|_{K}
+ \|\nabla \bm{\Pi}_{K}^{p+1}(\curl{ \bm{\zeta}}) \|_{K} \\
& \leq C \left( \frac{p^2}{h_K}\right) \|\bm{\Pi}_{K}^{p+1} (\bm{I}_{\rm RT}^{p} (\curl{ \bm{\zeta}} ) -    (\curl{ \bm{\zeta}}  ))\|_{K}
+ Cp^{\frac12}\|\nabla(\curl{ \bm{\zeta}}) \|_{K}
 \leq Cp  | \bm{\zeta}|_{\bm{H}^2(K)}.
\end{split}
\end{equation}
Combining the above estimate with the stability result \eqref{eq:lifting} yields
\begin{equation}\label{bound  T3}
	\begin{split}
|T_3| \leq \mu \|\mathcal{L}(\bm{u}_h)\|_{\Omega}  \|\nabla_h (\curl{\bm{I}_{\rm N}^{p}  (\bm{\zeta})})\|_{\Omega}
& \leq  C\mu \left\{ \sum_{K\in \mesh} \left(\frac{p^4}{h_F}\right)
 \| \bm{n}{\times}\jump{\bm{u}_h}\|_{\dK}^2
\right\}^{\frac12}  | \bm{\zeta}|_{\bm{H}^2(\Omega)}.
\end{split}
\end{equation}
Similarly, we estimate $\|\curl{\bm{\eta}}\|_{F}$. Recalling the commuting property \eqref{eq:Nedelec-commut},
$
\curl{\bm{\eta}} = \curl{\bm{\zeta}} - \bm{I}_{\rm RT}^{p} (\curl{ \bm{\zeta}} ),
$
and using the triangle inequality, the discrete trace inequality \eqref{eq: discrete trace}, the identity $\bm{\Pi}_{K}^{p+1} \circ\bm{I}_{\rm RT}^{p} = \bm{I}_{\rm RT}^{p}$, the stability of the $L^2$-orthogonal projection \eqref{eq:L2projection-stability}, and the approximation properties \eqref{eq:L2projection-trace} and \eqref{eq:RT-hp}, we obtain
\begin{equation}\label{trace approximation of RT projection}
\begin{split}
&\|\curl{\bm{\eta}}\|_{F}
\leq \| \curl{ \bm{\zeta}}  -  \bm{\Pi}_{K}^{p+1}  (\curl{ \bm{\zeta}} ) \|_{F}
+ \| \bm{I}_{\rm RT}^{p} (\curl{ \bm{\zeta}} ) -  \bm{\Pi}_{K}^{p+1}  (\curl{ \bm{\zeta}} ) \|_{F} \\
& \leq
 C\left( \frac{h_K}{p}\right)^{\frac12}\|\nabla(\curl{ \bm{\zeta}}) \|_{K}
+C \left( \frac{p}{h_K^{\frac12}}\right) \|\bm{\Pi}_{K}^{p+1} (\bm{I}_{\rm RT}^{p} (\curl{ \bm{\zeta}} ) -    (\curl{ \bm{\zeta}}  ))\|_{K}
 \leq Ch_K^{\frac12}  | \bm{\zeta}|_{\bm{H}^2(K)}.
\end{split}
\end{equation}
The above estimate yields
\begin{equation}\label{bound  T2}
	\begin{split}
|T_2| &\leq \mu \sum_{F\in \Fint} \|\jump{ \nabla_h\bm{u}_h \bm{n} }\|_{F} \|  \{ \curl {\bm{\eta}} \} \|_F
\leq C \mu \sum_{K\in \mesh} \left\{ h_K^{\frac12} \|\jump{ \nabla_h \bm{u}_h \bm{n} }\|_{\dK \cap \Fint}| \bm{\zeta}|_{\bm{H}^2(K)} \right\}\\
&
\leq  C\mu \left\{ \sum_{K\in \mesh} p\left(\frac{h_F}{p}\right)
 \| \jump{ \nabla_h \bm{u}_h \bm{n} }\|_{\dK \cap \Fint}^2
\right\}^{\frac12}  | \bm{\zeta}|_{\bm{H}^2(\Omega)}.
\end{split}
\end{equation}
Recalling the definition of $\sigma$ in \eqref{def: penalty}, we infer,
\begin{equation}\label{bound  T4}
	\begin{split}
|T_4| &\leq \mu \sum_{F\in \Fall} \| \sigma \bm{n}{\times}\jump{\bm{u}_h}\|_{F} \|  \bm{n}{\times}\jump{ \curl{\bm{\eta}} }\|_F
\leq C \mu \sum_{K\in \mesh} \left\{ h_K^{\frac12} \left(\frac{p^2}{h_K}\right)  \|\bm{n}{\times}\jump{\bm{u}_h} \|_{\dK}| \bm{\zeta}|_{\bm{H}^2(K)} \right\}\\
&
\leq  C\mu \left\{ \sum_{K\in \mesh} p^2\left(\frac{p^2}{h_F}\right)
 \| \bm{n}{\times}\jump{\bm{u}_h} \|_{\dK}^2
\right\}^{\frac12}  | \bm{\zeta}|_{\bm{H}^2(\Omega)}.
\end{split}
\end{equation}
Finally, using the estimate $| \bm{\zeta}|_{\bm{H}^2(\Omega)} \leq C(\Omega)  \| \nabla  \bm{v}\|_{\Omega}$ from \eqref{eq:stability-H2}, we obtain \eqref{eq:second_bnd_res},
\begin{equation*}
	\begin{split}
|(\mu\nabla_{h} \bm{e}, \nabla \bm{v})_{\Omega}|
\leq C(\Omega) \left\{\sum_{K\in\mesh} p^2\left\{
 \eta_{K,{\rm sta}}^2 + \eta_{K,{\rm nor}}^2+ \mathcal{O}(\bm{f})^2_K\right\}\right\}^{\frac12} |  \| \mu^{\frac12}\nabla  \bm{v}\|_{\Omega}.
\end{split}
\end{equation*}

\subsection{Proof of Lemma \ref{lemma: nonconforming error total}}\label{sec: proof full bound}

In this section, we prove Lemma \ref{lemma: nonconforming error total}, namely
$$
\|\mu^{\frac12}\nabla_h (\mathcal{R}_s(\bm{u_h}) -\bm{u}_h) \|_{\Omega}^2
\leq {} C \sum_{K\in \mathcal{T}_h} \left\{
\eta_{K,{\rm tan}}^2 + \eta_{K,{\rm sta}}^2
\right\}.
$$
Recalling the definition~\eqref{def: Stokes reconstruction global} applied with $\bm{w}_h:=\bm{u}_h$ and $\bm{w}_{\ba}:=\bm{u}_{\ba}$ and invoking the partition-of-unity property~\eqref{eq: PU}, we observe that
$$
\mathcal{R}_s(\bm{u_h})  - \bm{u}_{h} = \sum_{\ba \in\vertice} \{\bm{u}_{\ba} - \psi_{\ba} \bm{u}_h\}.
$$

Setting $\bm{\delta}_a: = \bm{u}_{\ba} - \psi_{\ba}\bm{u}_h$ for all $\ba \in \vertice$, and invoking the shape-regularity of the mesh, it is sufficient to prove that
\begin{equation}\label{bound enc_local}
\|\nabla_h \bm{\delta}_{\ba}\|_{\oma} \le {}
C \bigg\{
 \! \sum_{F\in \Fa   } \!
 \Big(\frac{h_F}{p}\Big) \|\n{\times}\jump{ \nabla_h \bm{u}_h} \|_{F}^2
+ \Big(\frac{p^2}{h_F} \Big)  \|\n{\times} \jump{\bm{u}_h}\|_{F}^2
\bigg\}^{\frac12}.
\end{equation}
where the set of mesh faces belonging to $\overline{\oma}$, say  $\bFa$, is partitioned as $\bFa=\Fa\cup \Fma$, where
$\Fa$ is the collection of the mesh faces which share $\ba$ and $\Fma$ the collection of all the mesh faces lying on $\partial \oma$ and not containing $\ba$. We notice that the two sets $\Fa$ and $\Fma$ are disjoint and that the set $\Fa$ contains mesh boundary faces if $\ba$ lies on the boundary.

Let us now prove~\eqref{bound enc_local}.  We decompose the proof of~\eqref{bound enc_local} in several steps.

(1)
Invoking the local Helmholtz decomposition \eqref{eq: HD identity} for $\underline{\bm{\theta}}: =  \nabla_h {\bm{\delta}_{\ba} }$ gives
\begin{align*}
 \nabla_h \bm{\delta}_{\ba} =   \nabla \bm{\phi}+\curlrw{ \underline{\bm{\beta}}}-q\underline{I},
\end{align*}
where $\bm{\phi}\in \bm{H}^1_{0}(\oma)\cap \bm{H}(\tdiv^0;\oma)$.

Using \eqref{eq:reconstruction-Stokes 2} we have $\nabla{\cdot} (\bm{u}_{\ba}- (\psi_{\ba} \bm{u}_h))=0$ in $L^2_0(\oma)$, then it gives $0 =(q,\nabla{\cdot} (\bm{u}_{\ba}- (\psi_{\ba} \bm{u}_h)  ))_{\oma}=(q\underline{I}, 	\nabla_h (\bm{u}_{\ba}- (\psi_{\ba} \bm{u}_h))_{\oma}$, therefore, it holds
\begin{align}\label{eq:ucuh}
	\|	\nabla_h \bm{\delta}_{\ba}\|_{\oma}^2=
(\nabla_h (\bm{u}_{\ba}- (\psi_{\ba} \bm{u}_h)), \nabla \bm{\phi})_{\oma}+(	\nabla_h  (\bm{u}_{\ba}- (\psi_{\ba} \bm{u}_h)), \curlrw{ \underline{\bm{\beta}}})_{\oma}.
\end{align}
Invoking \eqref{eq:difference-lifting}, \eqref{eq:lifting}, $\bm{\phi}\in \bm{H}^1_{0}(\oma)\cap \bm{H}(\tdiv^0;\oma)$, $\|{\psi}_{\ba}\|_{L^{\infty}(F)}\leq 1$ and the stability bound \eqref{eq: HD stability} and \eqref{eq:lifting}, we infer
\begin{equation}
\begin{split}
( \nabla_h (\bm{u}_{\ba}- (\psi_{\ba} \bm{u}_h)), \nabla \bm{\phi})_{\oma}
=  (\mathcal{L}(\psi_{\ba} \bm{u}_h),  \nabla \bm{\phi})_{\oma}
&\leq C \|\left(\frac{ p^2}{h}\right)^{\frac12} \jump{\psi_{\ba} \bm{u}_h} {\times} \n \|_{\Fa}\|\nabla \bm{\phi}\|_{\oma} \\
&
\leq C \|\left(\frac{ p^2}{h}\right)^{\frac12} \jump{\bm{u}_h}{\times} \n \|_{\Fa}\| \nabla_h \bm{\delta}_{\ba} \|_{\oma}.
\end{split}
\end{equation}
Next, we will derive the bound for the second term in \eqref{eq:ucuh}. As $ \underline{\bm{\beta}}\in \underline{\bm{H}}^1_{}(\oma)$,  let $\mKM (\underline{\bm{\beta}})$ denote the modified-Karkulik-Melenk interpolation operator defined in Lemma \ref{lemma: Local hp-KM} for each component of $\underline{\bm{\beta}}$ on the vertex patch $\oma$, we have
\begin{align*}
	(	\nabla_h \bm{\delta}_{\ba}, \curlrw{\underline{\bm{\beta}}})_{\oma}= (	\nabla_h \bm{\delta}_{\ba}, \curlrw{(\underline{\bm{\beta}}-\mKM(\underline{\bm{\beta}}})))_{\oma}
+(	\nabla_h \bm{\delta}_{\ba}, \curlrw{\mKM}(\underline{\bm{\beta}}))_{\oma}:= T_1 + T_2.
\end{align*}
The terms $T_1$ and $T_2$ are estimated independently.

(2) Bound on $T_1$. Introducing the column vectors $\{\bm{\beta}_j\}_{j\in\{1{:}d\}}$ composing the rows of $\underline{\bm{\beta}}$,    the scalar functions $\{u_{\ba,j}\}_{j\in\{1{:}d\}}$ composing the rows of  $\bm{u}_{\ba}$, $\{u_{h,j}\}_{j\in\{1{:}d\}}$ composing the rows of  $\bm{u}_{h}$ and recalling the definition of $\delta_{\ba}$, we have
$$
T_1  = \sumj (	\nabla_h (u_{\ba,j}- \psi_{\ba} u_{h,j}), \curl{(\bm{\beta}_j-\mKM (\bm{\beta}_j})))_{\oma},
$$
where we used the summation convention on repeated indices. We integrate by parts the curl operator in each tetrahedron $K\in \mesha$.  Since $\bm{u}_{\ba} \in \bm{H}^1_{0}(\oma)\cap \bm{H}(\tdiv^0;\oma) $, $\bm{n}_{\ba}{\times} \nabla_h u_{\ba,j}=\bm{0}$, on all the mesh faces composing $\partial\oma$, we have
$$
(\nabla_h u_{\ba,j}, \curl{(\bm{\beta}_j-\mKM (\bm{\beta}_j})))_{\oma}=0.
$$

Moreover, since $\bm{\beta}_j- \mKM (\bm{\beta}_j) \in \bm{H}^1(\oma)$ is single-valued on all faces $F \in  \Fa$ and $\psi_{\ba} u_{h,j}|_F = 0$ for all faces $F\in \Fma$, we obtain
\begin{align}\label{eqn1_nonconf}
T_1
&=  \sum\limits_{F \in \Fa} \sumj
( \bm{n} {\times} \jump{\nabla_h (\psi_{\ba}u_{h,j})}, \bm{\beta}_j - \mKM (\bm{\beta}_j) )_F.
\end{align}
Using the Cauchy--Schwarz inequality, we obtain
$$
|T_{1}| \leq \bigg\{\sum\limits_{F \in {\Fa}} \sumj \frac{h_F}{p} \|\bm{n} {\times} \jump{\nabla_h(\psi_{\ba}u_{h,j})}\|_{F}^2\bigg\}^{\frac{1}{2}}
\bigg\{\sum\limits_{F \in {\Fa}} \sumj \frac{p}{h_F} \| \bm{\beta}_j - \mKM (\bm{\beta}_j)\|_{F}^2\bigg\}^{\frac{1}{2}}
$$
Invoking the shape-regularity of the mesh, the approximation estimate in Lemma \ref{lemma: Local hp-KM}, and the stability of the local Helmholtz decomposition (see~\eqref{eq: HD stability}), we infer that
\begin{align}\label{eq:eta-face-bound}
\bigg\{\sum\limits_{F \in {\Fa}} \sumj \frac{p}{h_F} \| \bm{\beta}_j - \mKM (\bm{\beta}_j)\|_{F}^2\bigg\}^{\frac12}
\leq C  | \underline{\bm{\beta}}|_{\underline{\bm{H}}^1(\oma)}^2
\leq C \| \nabla_h {\bm{\delta}_{\ba}} \|_{\oma}.
\end{align}

Since $\bm{n} {\times} \nabla \psi_{\ba}$ is single-valued at $F$,  $\|{\psi}_{\ba}\|_{L^{\infty}(F)}\leq 1$, $\|\nabla {\psi}_{\ba}\|_{L^{\infty}(F)}\leq Ch_F^{-1}$, and also $\bm{u}_h \in \bm{H}(\tdiv^0;\Omega)$,  we infer that
\begin{align*}
&\sum\limits_{F \in {\Fa}} \sumj \frac{h_F}{p}
 \|\bm{n} {\times} \jump{\nabla_h (\psi_{\ba}u_{h,j})}\|_{F}^2\\
& \leq \sum\limits_{F \in {\Fa}} \sumj \frac{h_F}{p}
 \left\{
 \|\bm{n} {\times} \jump{\nabla_h u_{h,j}}\|_{F}^2 \|\psi_{\ba}\|_{L^{\infty}(F)}^2
 +
 \|\jump{u_{h,j}}\|_{F}^2 \|\bm{n} {\times}  \nabla \psi_{\ba}\|_{L^{\infty}(F)}^2
  \right\}\\
& \leq C
\sum_{F\in \Fa}
 \left\{
\frac{h_F}{p} \|\n {\times} \jump{\nabla_h \bm{u}_h}\|_{F}^2
+
\frac{1}{h_F p} \|\n {\times} \jump{\bm{u}_h}\|_{F}^2
\right\}.
\end{align*}
Combining the above bounds and using $p\geq1$, we obtain
\begin{align}\label{eq: bound T1}
|T_1|
& \leq C \left\{
\sum_{F\in \Fa}
\frac{h_F}{p} \|\n {\times} \jump{\nabla_h \bm{u}_h}\|_{F}^2
+
\frac{p^2}{h_F} \|\n {\times} \jump{\bm{u}_h}\|_{F}^2
\right\}^{\frac12}
 \|  \nabla_h {\bm{\delta}_{\ba}}  \|_{\oma}.
\end{align}

(3) Bound on $T_2$.  Since  $\mKM (\underline{\bm{\beta}}) \in \underline{\bm{H}}^1(\oma)$, $\jump{\curlrw{\mKM (\underline{\bm{\beta}} }) \n}_F = \bm{0}$ across every interface $F\in \Fa$, and $\divrw (\curlrw{\mKM}(\underline{\bm{\beta})})=  \bm{0}$ , integrating by parts the gradient operator gives
$$
(	\nabla \bm{u}_{\ba}, \curlrw{\mKM}(\underline{\bm{\beta}} ))_{\oma}=0.
$$
Recalling that $\psi_{\ba}$ vanishes on all faces $F\in\Fma$ and observing that $\curlrw{\mKM(\underline{\bm{\psi}})}\bm{n}$ is  single-valued on those faces.
\begin{align*}
(	\nabla_h (\psi_{\ba} \bm{u}_h), \curlrw{ \mKM(\underline{\bm{\beta}} )})_{\oma}
& = \sum_{F\in \Fa}( \jump{\psi_{\ba} \bm{u}_h},  \curlrw {\mKM(\underline{\bm{\beta}} )}\bm{n})_F.
\end{align*}
Combining the above two bounds, we have
\begin{align*}
T_2
& = -\sum_{F\in \Fa}( \jump{\psi_{\ba} \bm{u}_h},  \curlrw{ \mKM(\underline{\bm{\beta}} )}\bm{n})_F.
\end{align*}
For every face $F\in \Fa$, we can pick up a mesh cell $K$ of which $F$ is a face and obtain from \eqref{eq: discrete trace} that
$$
\|\curl{ \mKM (\underline{\bm{\beta}})}\bm{n}\|_{F}^2\leq C \frac{p^2}{h_F}\|\curlrw { \mKM (\underline{\bm{\beta}})}\|_{K}^2.
$$
Next, using the Cauchy-Schwarz inequality and the above bounds, we infer
\begin{align*}
|T_2|
& \leq  \sum_{F\in \Fa} \|\jump{\psi_{\ba} \bm{u}_h}\|_{F} \|\curlrw { \mKM (\underline{\bm{\beta}})}\bm{n}\|_{F}
 \leq C \left\{
\sum_{F\in \Fa} \frac{p^2}{h_F} \| \jump{\bm{u}_h}\|_{F}^2
\right\}^{\frac12}
  \|\curlrw { \mKM(\underline{\bm{\beta}})}\|_{\oma}.
\end{align*}
Owing to \eqref{eq: hp-approximation} and  \eqref{eq: HD stability}, we infer that
\begin{equation}\label{relation 2}
  \|\curlrw{ \mKM (\underline{\bm{\beta}})}\|_{\oma}\leq   \|\nabla_h \mKM (\underline{\bm{\beta}})\|_{\oma}
\leq C\| \nabla  \underline{\bm{\beta}}\|_{\oma}\leq C  \|\nabla_h {\bm{\delta}_{\ba}} \|_{\oma}.
\end{equation}
Combining the above bounds, and noticing that $ \| \jump{\bm{u}_h}\|_{F}=  \|\n {\times} \jump{\bm{u}_h}\|_{F} $, we obtain
\begin{align}\label{eq: bound T2}
|T_2|
& \leq C \left\{
\sum_{F\in \Fa} \frac{p^2}{h_F} \| \n {\times} \jump{\bm{u}_h}\|_{F}^2
\right\}^{\frac12}
\|  \nabla_h {\bm{\delta}_{\ba}} \|_{\oma}.
\end{align}
Finally, combining bounds of \eqref{eq: bound T1}, \eqref{eq: bound T2}, together with \eqref{eq:ucuh} and invoking the triangle inequality,  we derived the bound \eqref{bound enc_local}.

\subsection{Proof of Theorem \ref{theorem: lower bound}}\label{sec: proof lower bound}

Let $b_K$ denote the standard bubble function associated with the mesh cell $K$.  The function $b_K$ vanishes on $\partial K$ and is defined as the product of the affine basis functions corresponding to the vertices of  $K$.
To prove the lower bounds, we first recall some useful results.
\begin{lemma}[Inverse estimate with bubbles] Let $K\in\mesh$. For all $q_p\in \mathbb{P}_p(K)$, the following holds:
	\begin{align}
		\|b_K^{\frac{\alpha}{2}} q_p\|_{K}\leq C(p) \|b_K^{\frac{\beta}{2}} q_p\|_{K}\quad -\frac{1}{2}<\alpha\leq \beta.\label{eq:inverse-bubble}
	\end{align}

\end{lemma}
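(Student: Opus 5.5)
The plan is a finite-dimensionality and scaling argument on the reference simplex. Let $F_K:\hat K\to K$ be the affine map, and set $\hat q:=q_p\circ F_K\in\mathbb{P}_p(\hat K)$ and $\hat b:=b_K\circ F_K$. Since $\hat b$ is the product of the barycentric coordinates of $\hat K$, it is independent of $K$. Both sides of \eqref{eq:inverse-bubble} scale by the same factor $|\det DF_K|^{1/2}$. Hence it suffices to prove
\begin{align*}
\|\hat b^{\frac{\alpha}{2}}\hat q\|_{\hat K}\le C(p)\,\|\hat b^{\frac{\beta}{2}}\hat q\|_{\hat K}\qquad\forall\,\hat q\in\mathbb{P}_p(\hat K),
\end{align*}
with $C(p)$ depending only on $p$, $d$, $\alpha$ and $\beta$. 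The constant is then trivially uniform in $K$: shape-regularity is not even needed, because the weight is affine-invariant.

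Next, I show that both maps $\hat q\mapsto\|\hat b^{\gamma/2}\hat q\|_{\hat K}$, for $\gamma\in\{\alpha,\beta\}$, are norms on the finite-dimensional space $\mathbb{P}_p(\hat K)$.

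\emph{Finiteness.} Because $\hat q$ is bounded, it suffices that $\int_{\hat K}\hat b^{\gamma}<\infty$. Near a face, $\hat b$ behaves like the distance $\lambda_i$ to that face, and near lower-dimensional subsimplices it behaves like products of several $\lambda_i$. Writing the integral as a Dirichlet integral, $\int_{\hat K}\prod_{i}\lambda_i^{\gamma}\,dx$ equals $\prod_i\Gamma(\gamma+1)/\Gamma((d+1)(\gamma+1))$ up to a constant. This is finite precisely when $\gamma>-1$, which holds since $\gamma\ge\alpha>-\tfrac12$. The hypothesis $\alpha>-\tfrac12$ is stronger than this integrability threshold, so it comfortably suffices.

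\emph{Definiteness.} If $\|\hat b^{\gamma/2}\hat q\|_{\hat K}=0$, then $\hat q=0$ almost everywhere, because $\hat b>0$ in the interior of $\hat K$. A polynomial vanishing almost everywhere is zero, so $\hat q=0$.

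Homogeneity and the triangle inequality are immediate, since each map is a weighted $L^2$ norm.

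Since all norms on a finite-dimensional space are equivalent, there is a constant $C(p,\alpha,\beta,d)$ with the desired inequality on $\hat K$. Mapping back to $K$ gives \eqref{eq:inverse-bubble}. The condition $\alpha\le\beta$ is not actually needed for the norm-equivalence argument. It only signals the nontrivial direction: when $\alpha\le\beta$ one has $\hat b^{\alpha}\ge\hat b^{\beta}$, since $0\le\hat b\le1$, so the reverse inequality holds with constant $1$.

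The main obstacle is only the integrability check for negative exponents. I expect it to be routine via the Dirichlet/Beta integral above. If explicit $p$-dependence were wanted, one would instead need Jacobi-polynomial expansions in the collapsed coordinates, but the statement only asks for $C(p)$, so the compactness argument suffices.
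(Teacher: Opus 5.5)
Your proof is correct, and it takes a different route from the paper. The paper gives no argument of its own. It defers to \cite[Proposition~3.6]{Dong21}, an $hp$-oriented result in which the algebraic growth of the constant in $p$ is made explicit, and it inherits the hypothesis $-\tfrac12<\alpha\le\beta$ from that source.

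Your argument is elementary and self-contained. You pull back to the reference simplex and observe that $b_K\circ F_K$ is the fixed product of barycentric coordinates, so both sides scale by $|\det DF_K|^{1/2}$. You then use equivalence of two weighted $L^2$ norms on $\mathbb{P}_p(\hat K)$. Your Dirichlet-integral check shows this yields more than the statement: the estimate holds for any $\alpha,\beta>-1$, with no ordering between them and no shape-regularity assumption.

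What your approach does not give is any information on how $C(p)$ grows with $p$. The norm-equivalence constant is non-constructive, whereas the cited result supplies exactly this growth, and it would be needed for $p$-explicit efficiency bounds. The paper states only $C(p)$ and explicitly declines to track the $p$-dependence of its lower bounds, so your version suffices for every use of the lemma. The same argument, applied on the reference $(d-1)$-simplex with the face bubble, also covers the ``face version'' invoked in the proof of Theorem~\ref{theorem: lower bound}.
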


\begin{proof}
	The proof can be found in \cite[Proposition~3.6]{Dong21}.
\end{proof}

The following result is proven in \cite[Proposition~3.85, 3.86]{Verfurth2013posteriori} for $d=2$ and $d=3$.
\begin{lemma}[Polynomial weighted $H^1$ to $L^2$ inverse estimate] Let $K\in\mesh$. Then, there exists a positive constant $C(p)>0$, independent of $h$, such that for all $q_p\in \mathbb{P}_p(K)$,
	\begin{align}
		\|\nabla (b_K q_p)\|_{K}\leq C(p) h_K^{-1} \|q_p\|_{K}.\label{eq:bubble-inverse}
	\end{align}

\end{lemma}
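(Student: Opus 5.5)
The plan is to reduce \eqref{eq:bubble-inverse} to a fixed reference element, using norm equivalence on the finite-dimensional space $\mathbb{P}_p$. The constant $C(p)$ may depend on $p$, so no explicit $p$-tracking is needed.

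\emph{Step 1: reference-element estimate.} Let $\hat K$ be the reference simplex and $F_K:\hat K\to K$, $F_K(\hat x)=B_K\hat x+b$, the affine map. Since the barycentric coordinates are affine-invariant, $b_K\circ F_K=\hat b$, the reference bubble. For $\hat q\in\mathbb{P}_p(\hat K)$, the map $\hat q\mapsto \hat b\hat q$ is a linear map into $\mathbb{P}_{p+d+1}(\hat K)$. On that finite-dimensional space, $\|\hat\nabla(\cdot)\|_{\hat K}$ is a seminorm bounded by any norm. Hence
\begin{align*}
\|\hat\nabla(\hat b\hat q)\|_{\hat K}\le \hat C(p)\,\|\hat q\|_{\hat K}\qquad\forall\,\hat q\in\mathbb{P}_p(\hat K),
\end{align*}
where $\hat C(p)$ is the norm of this linear operator and depends only on $p$ and $d$. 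Alternatively, one can bound $\|\hat\nabla(\hat b\hat q)\|_{\hat K}\le \|\hat\nabla\hat b\|_{L^\infty}\|\hat q\|_{\hat K}+\|\hat b\|_{L^\infty}\|\hat\nabla\hat q\|_{\hat K}$ and then apply the inverse estimate \eqref{eq:inverse} on $\hat K$ to the second term. This even gives $\hat C(p)\le C p^2$, but the finite-dimensional argument suffices.

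\emph{Step 2: scaling back to $K$.} Set $\hat q=q_p\circ F_K$. By the chain rule,
\begin{align*}
\|\nabla(b_Kq_p)\|_{K}\le \|B_K^{-1}\|\,|\det B_K|^{1/2}\,\|\hat\nabla(\hat b\hat q)\|_{\hat K}
\quad\text{and}\quad
\|\hat q\|_{\hat K}=|\det B_K|^{-1/2}\|q_p\|_K .
\end{align*}
Shape-regularity gives $\|B_K^{-1}\|\le C h_K^{-1}$. Combining these with Step 1 yields $\|\nabla(b_Kq_p)\|_K\le C\hat C(p)h_K^{-1}\|q_p\|_K$, which is \eqref{eq:bubble-inverse}.

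The main point of care is that the constant must be independent of $h$ and of the individual element. This holds because everything is pulled back to the single reference element $\hat K$, and the geometric factors enter only through $\|B_K^{-1}\|$, which shape-regularity controls. The computation is otherwise routine, so I expect no real obstacle beyond bookkeeping in the scaling.
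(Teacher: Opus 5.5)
Your proof is correct. The paper gives no argument of its own here: it simply cites Verf\"urth's monograph \cite[Propositions~3.85--3.86]{Verfurth2013posteriori} for $d=2,3$. Your reference-element argument is therefore a self-contained substitute for a citation, not a reproduction of an in-paper proof.

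Because the lemma only asks for an unspecified $C(p)$, and the paper explicitly declines to track $p$ in the lower bounds, your ingredients are exactly what is needed:
\begin{itemize}
\item pull back to $\hat K$;
\item bound the seminorm $\hat q\mapsto\|\hat\nabla(\hat b\hat q)\|_{\hat K}$ on the finite-dimensional space $\mathbb{P}_p(\hat K)$;
\item scale back using $\|B_K^{-1}\|\le Ch_K^{-1}$ and $\|\hat q\|_{\hat K}=|\det B_K|^{-1/2}\|q_p\|_K$.
\end{itemize}
This also works in any dimension.

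Your ``alternative'' in Step~1 can be carried out directly on $K$, which removes Step~2 entirely. Two facts suffice. First, $0\le b_K\le 1$. Second, $\|\nabla b_K\|_{L^\infty(K)}\le Ch_K^{-1}$: each $|\nabla\lambda_i|$ is the reciprocal of a height of $K$, and every height is at least $2\rho_K\ge c\,h_K$ under shape-regularity. The product rule together with \eqref{eq:inverse} then gives $\|\nabla(b_Kq_p)\|_K\le C(1+p^2)h_K^{-1}\|q_p\|_K$. This is an explicit $C(p)=\mathcal{O}(p^2)$, which is more than the lemma claims.

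Compared with the citation, your route costs a few lines. In exchange, it makes the $h$- and element-independence of the constant transparent.
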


The following result can be found in \cite[Corollary~3.9]{Dong21}.

\begin{lemma}[Polynomial extension stability result]\label{lemma:extension} Let $K\in\mesh$ and $F\subset \partial K$. There exists an extension operator $E:\mathbb{P}_p(F)\rightarrow H^2(K)$ such that there exists a constant $C(p)>0$, independent of $h$ but depending on $p$, such that for all $q_p\in \mathbb{P}_p(F)$,
	\begin{align}
		&E(q_p)_{|F}=b_{\widetilde{F}} {q_p}_{|F},\\
		&	\|E(q_p)\|_{K}+ h_K|E(q_p)|_{H^1(K)}+h_K^2 | E(q_p)|_{H^2(K)}\leq C(p) h_K^{1/2} \|q_p\|_{F},\label{eq:extension-stability}
	\end{align}
	where $b_{\widetilde{F}}$ is the standard face bubble function associated with $F$.

\end{lemma}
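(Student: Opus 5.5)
The statement to prove is Lemma~\ref{lemma:extension}: an extension operator $E:\mathbb{P}_p(F)\to H^2(K)$ with trace $b_{\widetilde F}q_p$ on $F$, satisfying the scaled stability bound \eqref{eq:extension-stability}. I would argue on the reference simplex $\widehat K$ with reference face $\widehat F$ and then transfer to $K$ by an affine map $x=B_K\hat x+c_K$.

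\textbf{Step 1: construction on $\widehat K$.} Place $\widehat F$ in the hyperplane $\hat x_d=0$ with opposite vertex $\hat e_d$. For $\hat q\in\mathbb{P}_p(\widehat F)$, define $\widehat E(\hat q)(\hat x):=b_{\widetilde F}(\hat x)\,\hat q(\hat x')$, where $\hat x'=(\hat x_1,\dots,\hat x_{d-1})$.

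\begin{itemize}
\item Here $b_{\widetilde F}$ denotes the natural polynomial extension into $\widehat K$ of the face bubble, namely the product of the barycentric coordinates associated with the vertices of $\widehat F$. One could square it if extra $C^1$-vanishing is wanted; the paper's notation $b_{\widetilde F}$ allows either choice.
\item $\widehat E(\hat q)$ is a polynomial of degree at most $p+d$ on $\widehat K$, so it lies in $H^2(\widehat K)$.
\item On $\widehat F$ we have $\hat x_d=0$, so its restriction is exactly $b_{\widetilde F}\hat q$.
\end{itemize}

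\textbf{Step 2: reference stability.} The map $\hat q\mapsto\widehat E(\hat q)$ is linear from the finite-dimensional space $\mathbb{P}_p(\widehat F)$ into $H^2(\widehat K)$. Since all norms are equivalent on $\mathbb{P}_p(\widehat F)$, there is a constant $C(p,d)$ with $\|\widehat E\hat q\|_{H^2(\widehat K)}\le C(p,d)\|\hat q\|_{\widehat F}$. No explicit $p$-tracking is required here, consistent with the remark on $p$-dependence.

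\textbf{Step 3: scaling to $K$.} Set $E(q_p):=\widehat E(q_p\circ F_K)\circ F_K^{-1}$, where $F_K$ is the affine map sending $\widehat F$ onto $F$.

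\begin{itemize}
\item Barycentric coordinates are affine invariant, so the face bubble maps to the face bubble and the trace identity is preserved.
\item Standard scaling under shape regularity gives $\|v\|_K\simeq h_K^{d/2}\|\hat v\|_{\widehat K}$, $|v|_{H^1(K)}\lesssim h_K^{d/2-1}|\hat v|_{H^1(\widehat K)}$, and $|v|_{H^2(K)}\lesssim h_K^{d/2-2}|\hat v|_{H^2(\widehat K)}$.
\item On the face, $\|\hat q\|_{\widehat F}\simeq h_F^{-(d-1)/2}\|q_p\|_F$.
\item Combining these gives $\|E q_p\|_K+h_K|Eq_p|_{H^1(K)}+h_K^2|Eq_p|_{H^2(K)}\le C(p)h_K^{d/2}h_F^{-(d-1)/2}\|q_p\|_F$.
\item With $h_F\simeq h_K$, this is exactly $C(p)h_K^{1/2}\|q_p\|_F$.
\end{itemize}

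\textbf{Main obstacle.} The argument is essentially routine; the only point needing care is that the extension is genuinely well defined, that is, independent of the choice of affine parametrization. I would handle this by fixing a vertex ordering, so that $E$ is well defined per pair $(K,F)$, which is all the statement requires.
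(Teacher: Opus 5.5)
Your proof is correct, and it takes a more elementary route than the paper. The paper gives no argument of its own; it simply cites \cite[Corollary~3.9]{Dong21}, a result from a framework designed to track the polynomial degree through bubble-weighted inverse estimates and extensions.

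Your construction $b_{\widetilde F}\,\hat q(\hat x')$ on the reference simplex works as follows:
\begin{itemize}
\item it has the required trace on $\widehat F$;
\item it is a polynomial, hence lies in $H^2$;
\item it is bounded in terms of $\|\hat q\|_{\widehat F}$ by equivalence of norms on the finite-dimensional space $\mathbb{P}_p(\widehat F)$;
\item the affine scaling factor $h_K^{d/2}h_F^{-(d-1)/2}\simeq h_K^{1/2}$ under shape regularity then gives \eqref{eq:extension-stability}.
\end{itemize}

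Compared with the cited result, your norm-equivalence argument yields only an unspecified $C(p)$. The cited framework is what one would need for a $p$-explicit efficiency bound. Since the paper deliberately does not track $p$ in Theorem~\ref{theorem: lower bound}, nothing is lost for its purposes, and your version is self-contained and transparent.

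Two small remarks. First, your aside about squaring $b_{\widetilde F}$ would change the trace to $b_{\widetilde F}^2 q_p$. Stick with the unsquared product, which is what the statement prescribes. Second, the paper actually invokes $E$ on the kite $\widetilde K$ rather than on a single simplex. Applying your construction on each half of the kite gives traces that both equal $b_{\widetilde F}q_p$ on $F$, so the glued field $\bm{r}$ is continuous across $F$. That is exactly what is needed for $b_F\bm{r}\in\bm{H}^2_0(\widetilde K)$ in step (3), because $b_F$ vanishes on $F$ with continuous gradient.
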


\begin{proof}[Proof of \eqref{eq:error-cell-residual}--\eqref{eq:error-tangential}](1) Error representation formula. We
apply \eqref{eq:weak-div-free} and integrate by parts the vector Laplace operator for any $\bm{v}\in \bm{H}^1_0(\Omega)\cap \bm{H}(\tdiv^0;\Omega)$ to get the following error equation
	\begin{equation}
		\begin{split}
			(\mu \nabla_h\bm{e},\nabla \bm{v})_{\Omega}&=(\bm{f},\bm{v})_{\Omega}-(\mu \nabla_h\bm{u}_h,\nabla\bm{v})_{\Omega}=(\bm{f}+\mu \Delta_h\bm{u}_h,\bm{v})_{\Omega}-\sum_{K\in \mesh} (\mu \nabla\bm{u}_h\bm{n},\bm{v})_{\partial K}\\
			&=\sum_{K\in \mesh} (\bm{f}+\mu \Delta\bm{u}_h,\bm{v})_K-\sum_{F\in \Fint} (\mu \jump{\nabla_h\bm{u}_h \bm{n}}, \bm{v})_F.
		\end{split}
		\label{eq:error-second-order}
	\end{equation}
	For $\bm{w}\in \bm{H}^2_0(\Omega)$, we have $\nabla{\times} \bm{w}\in \bm{H}^1_0(\Omega)\cap \bm{H}(\tdiv^0;\Omega)$, choosing $\bm{v}:=\nabla{\times} \bm{w}$ in \eqref{eq:error-second-order}, and integrating by parts  the curl operator gives the following error representation formula:
	\begin{equation}
		\begin{split}
			(\mu \nabla_h\bm{e},\nabla (\nabla{\times} \bm{w}))_{\Omega}&=\sum_{K\in \mathcal{T}_h} (\bm{f}+\mu \Delta\bm{u}_h,\nabla{\times} \bm{w})_K-\sum_{F\in \Fint} (\mu \jump{(\nabla_h\bm{u}_h \bm{n})},\nabla{\times} \bm{w})_F
			\\&=\sum_{K\in \mesh}(\nabla{\times} (\bm{f}+\mu \Delta\bm{u}_h),\bm{w})_K
            -\sum_{F\in \Fint} (\mu \jump{(\nabla_h\bm{u}_h \bm{n})},\nabla{\times} \bm{w})_F\\
			&-\sum_{F\in \Fint}(\mu \bm{n}{\times} \jump{\Delta_h\bm{u}_h},\bm{w})_F=:\sum_{i=1}^3 T_i.
		\end{split}
		\label{eq:error-fourth}
	\end{equation}

	(2) Proof of \eqref{eq:error-cell-residual}.
	Choosing  $\bm{w}:=b_K^2(\bm{\Pi}_{K}^{p-2}\nabla {\times} \bm{f}+\mu \nabla{\times}\Delta \bm{u}_h)\in \bm{H}^2_0(K)$ in \eqref{eq:error-fourth}, then the second and third terms on the right-hand side of \eqref{eq:error-fourth} vanish, we have from the Cauchy-Schwarz inequality and \eqref{eq:inverse} that
	\begin{equation}
		\begin{split}
			\|b_K (\bm{\Pi}_{K}^{p-2}\nabla {\times} \bm{f}+&\mu \nabla{\times}\Delta \bm{u}_h)\|_{K}^2=-(\mu \nabla \bm{e},\nabla (\nabla {\times} \bm{w}))_K-(\nabla{\times} \bm{f}-\bm{\Pi}_{K}^{p-2}\nabla{\times}\bm{f}, \bm{w})_K\\
			&\leq {C(p)}\Big( \frac{\mu}{h_K}\|\nabla \bm{e}\|_{K} \|\nabla {\times} \bm{w}\|_{K}+\|\nabla{\times} \bm{f}-\bm{\Pi}_{K}^{p-2}\nabla {\times}\bm{f}\|_{K}\|\bm{w}\|_{K}\Big).
		\end{split}\label{eq:bKcurlf}
	\end{equation}
It follows from	\eqref{eq:bubble-inverse} that
	\begin{align}
		\|\nabla{\times} \bm{w}\|_{K}&=\|\nabla{\times} (b_K^2(\bm{\Pi}_{K}^{p-2}\nabla {\times} \bm{f}+\mu \nabla{\times}\Delta \bm{u}_h))\|_{K}\leq  \frac{C(p)}{h_K} \|b_K (\bm{\Pi}_{K}^{p-2}\nabla {\times} \bm{f}+\mu \nabla {\times}\Delta \bm{u}_h)\|_{K}.\label{eq:curlw}
	\end{align}
	Combining \eqref{eq:bKcurlf}, \eqref{eq:curlw} and $\|\bm{w}\|_{K}\leq C\|b_K (\bm{\Pi}_{K}^{p-2}\nabla {\times} \bm{f}+\mu\nabla {\times}  \Delta \bm{u}_h)\|_{K}$ owing to $b_K\leq 1$ gives
	\begin{align}
		\|b_K (\bm{\Pi}_{K}^{p-2}\nabla {\times} \bm{f}+\mu \nabla {\times} \Delta \bm{u}_h)\|_{K}\leq C(p)\Big(\frac{\mu}{h_K^2}  \|\nabla \bm{e}\|_{K}+\|\nabla{\times} \bm{f}-\bm{\Pi}_{K}^{p-2}\nabla{\times}\bm{f}\|_{K}\Big).\label{eq:cell-bK}
	\end{align}
	An application of \eqref{eq:inverse-bubble} with $\alpha=0$ and $\beta=2$ gives
	\begin{equation}
		\begin{split}
			\|\bm{\Pi}_{K}^{p-2}\nabla {\times} \bm{f}+\mu \nabla {\times} \Delta \bm{u}_h\|_{K}&\leq C(p)\Big(\frac{\mu}{h_K^2}   \|\nabla \bm{e}\|_{K}+\|\nabla{\times} \bm{f}-\bm{\Pi}_{K}^{p-2}\nabla {\times} \bm{f}\|_{K}\Big),
		\end{split}\label{eq:residual-curlf}
	\end{equation}
	which yields \eqref{eq:error-cell-residual}.
\begin{figure}
	\centering
	\begin{tikzpicture}[xscale=0.4, yscale=0.2, thick,rotate=100]

		\draw[line width=1pt] (-6.53, 0.05) -- (-4.59, -7.45);
		\draw[line width=1pt] (-6.53, 0.05)  -- (0.9, 7.55);
		\draw[line width=1pt,red] (-6.53, 0.05)  -- (4.49, 0.05);
		\draw[line width=1pt] (-4.59, -7.45) -- (4.49, 0.05);
		\draw[line width=1pt] (0.9, 7.55)  -- (4.49, 0.05);

		\draw[dashed, line width=1pt] (-6.53, 0.05)  -- (-1.49, -4.25);
		\draw[dashed, line width=1pt] (-1.49, -4.25)  -- (4.49, 0.05);
		\draw[dashed, line width=1pt] (-1.49, 4.65) -- (4.49, 0.05);
		\draw[dashed, line width=1pt] (-6.53, 0.05) -- (-1.49, 4.65);

		\node at (0, 4.5) {$K_1$};               
		\node at (-3, -5) {$K_2$};              
		\node at (-1, 0.5) {$\widetilde{K}$};        
		\node at (-1, -0.8) {$F$};               
	\end{tikzpicture}
	\caption{An illustration of a kite for triangular meshes. The solid lines form the union of two triangular cells $K_1$ and $K_2$; the dashed lines form the kite $\widetilde{K}$, which is symmetric with respect to the common face $F$.}
	\label{fig:kite}
\end{figure}

	(3) Proof of  \eqref{eq:error-normal-flux}. We decompose the estimator \eqref{eq:error-normal-flux} into its tangential and normal components,
\begin{align}
\|\jump{\nabla_h \bm{u}_h\bm{n}_F}\|_{F}^2
=
\| \bm{n}_F{\times}\jump{\nabla_h \bm{u}_h\bm{n}_F} \|_{F}^2
+
\| \bm{n}_F{\cdot}\jump{\nabla_h \bm{u}_h\bm{n}_F} \|_{F}^2.\label{eq:normal-tangential-decompose}
\end{align}
We estimate these two contributions separately.

	(i) The estimate of the first term on the right-hand side of \eqref{eq:normal-tangential-decompose} relies on a bespoke bubble function constructed in \cite{Dong21}. For a given face $F\in \Fint$, there exists a kite domain $\widetilde{K}\subset K_1\cup K_2$, where $K_1, K_2\in\mesh$ are the two cells sharing the common face $F$, such that $\widetilde{K}$ is symmetric with respect to $F$; see Figure~\ref{fig:kite} for an illustration in two dimensions. An analogous three-dimensional construction can be obtained for tetrahedral meshes, where the kite takes the form of a hexahedral patch.

	We define $b_{\widetilde{F}}$ as the standard face bubble function on $\widetilde{K}$, given by the product of the nodal linear basis functions associated with the vertices of $F$ on each tetrahedron forming the kite $\widetilde{K}$. Owing to the symmetry of $\widetilde{K}$ with respect to $F$, the normal derivative is continuous across $F$, namely,
	$
	\jump{\nabla b_{\widetilde{F}}{\cdot}\bm{n}}_{F}=0.
	$
	Let $b_\ell$ be an affine function on $\widetilde{K}$ satisfying
	$
	{b_\ell}_{|F}=0,
	\bm{n}_F{\cdot}\nabla b_\ell=\frac{1}{h_F}.
	$
	We then define
	$
	b_F:=b_\ell b_{\widetilde{F}}^2 \in C^1(\Omega)\cap H^2_0(\Omega)
	$.
	The following result holds.
	\begin{align*}
{b_F}|_{\widehat{F}}=0,
\quad  {\jump{\nabla b_F}}_{\widehat{F}}=\bm{0} \quad \forall \widehat{F}\in \Fall,\; (\bm{n}_F{\cdot} \nabla b_F)_{\widehat{F}}=0\;\forall \widehat{F}\in \Fall\backslash F,\quad \bm{n}_F{\cdot} \nabla b_F=\frac{1}{h_F} b_{\widetilde{F}}^2\quad \mbox{on}\;F.
	\end{align*}
	We define $\bm{w}:=b_F \bm{r}\in \bm{H}^2_0(\widetilde{K})$, where $\bm{r}:= E(\bm{n}_F{\times} \jump{\nabla_h\bm{u}_h \bm{n}_F})$ and $E$ is the extension operator defined in Lemma~\ref{lemma:extension} associated with $\widetilde{K}$.
Taking $\bm{w}:=b_F \bm{r}\in \bm{H}^2_0(\widetilde{K})$	in the error equation \eqref{eq:error-fourth} yields
	\begin{align}
		(\mu \jump{(\nabla_h\bm{u}_h \bm{n}_F)}, \nabla{\times} \bm{w})_F=-(\nabla{\times} (\bm{f}+\mu \Delta_h \bm{u}_h), \bm{w})_{\widetilde{K}}-(\mu \nabla_h\bm{e}, \nabla(\nabla{\times} \bm{w}))_{\widetilde{K}}.\label{eq:error-I1I2}
	\end{align}
Decomposing $\nabla{\times} \bm{w}$ into its normal and tangential components, and using the fact that $\bm{w}|_F=(b_F \bm{r})|_F=0$ on all faces $F\in \Fint$ since $b_F|_F=0$, we infer that $\bm{n}_F{\cdot}(\nabla{\times} \bm{w})=0$. Hence,
\begin{equation}
	\begin{split}
	 \mu(\jump{(\nabla_h\bm{u}_h \bm{n}_F)}, \nabla{\times} \bm{w})_F
	 =\mu(\bm{n}_F{\times}  \jump{\nabla_h\bm{u}_h \bm{n}_F}, \bm{n}_F{\times}(\nabla{\times} \bm{w}))_F.
	\label{eq:w}
	\end{split}
\end{equation}
Using the vector identity $
	\bm{n} {\times} (\nabla b_F {\times} \bm{r}) = \nabla b_F \, (\bm{n} {\cdot} \bm{r}) - \bm{r} \, (\bm{n} {\cdot} \nabla b_F)
	$, the following holds
	\begin{align*}
		\bm{n}_F{\times} (\nabla {\times} \bm{w})|_{F}=\bm{n}_F{\times} \Big( b_F \nabla {\times} \bm{r}+\nabla b_F {\times} \bm{r}\Big)|_{F}
=\nabla b_F(\bm{n}_F{\cdot}\bm{r})-\bm{r} (\bm{n}_F{\cdot} \nabla b_F)|_{F}+b_F (\bm{n}_F{\times} (\nabla {\times} \bm{r}))|_{F}.
	\end{align*}
	Using the fact that ${(b_F)}|_{F}=0$ and thus $(\bm{n}_F{\times} \nabla b_F)|_{F}=\bm{0}$, and decomposing $\nabla b_F$ into its normal and tangential components, we can obtain
$
		\bm{n}_F{\times} (\nabla {\times} \bm{w})|_{F}=\bm{n}_F (\nabla b_F{\cdot}\bm{n}_F)(\bm{n}_F{\cdot}\bm{r})|_{F}-(\bm{r} \bm{n}_F{\cdot} \nabla b_F)|_{F}.
$
This combined with \eqref{eq:w} yields
	\begin{align*}
			(\mu \jump{ \nabla_h\bm{u}_h \bm{n}_F }, \nabla{\times} \bm{w})_F&=\mu (\bm{n}_F{\times}  \jump{\nabla_h \bm{u}_h\bm{n}_F}, \bm{n}_F (\nabla b_F{\cdot}\bm{n}_F)(\bm{n}_F{\cdot}\bm{r}))_F\\
            &\quad -\mu(\bm{n}_F{\times}  \jump{\nabla_h\bm{u}_h \bm{n}_F}, \bm{r} (\bm{n}_F{\cdot} \nabla b_F))_F.
	\end{align*}
Invoking $(\bm{n}_F{\times} \jump{\nabla_h \bm{u}_h\bm{n}_F}){\cdot} \bm{n}_F = \jump{\nabla_h\bm{u}_h \bm{n}_F} (\bm{n}_F{\times} \bm{n}_F) = 0$, the first term on the right-hand vanishes. Thus, we have
$
			(\mu \jump{\nabla_h \bm{u}_h\bm{n}_F}, \nabla{\times} \bm{w})_F=-\mu (\bm{n}_F{\times}  \jump{\nabla_h \bm{u}_h\bm{n}_F}, \bm{r} (\bm{n}_F{\cdot} \nabla b_F))_F.
$
	In view of $(\bm{n}{\cdot} \nabla b_F)_{F}=\frac{1}{h_F}b_{\widetilde{F}}^2$, we deduce
	\begin{align}
			(\mu \jump{\nabla_h \bm{u}_h\bm{n}_F}, \nabla{\times} \bm{w})_F=-\mu(\bm{n}_F{\times} \jump{\nabla_h\bm{u}_h \bm{n}_F}, \bm{r}b_{\widetilde{F}}^2 )_F h_F^{-1}.\label{eq:I-modified-form}
	\end{align}
	Plugging \eqref{eq:I-modified-form}	into the error equation \eqref{eq:error-I1I2}, we obtain
	\begin{equation}
		\begin{split}
			&\mu (\bm{n}_F{\times} \jump{\nabla_h\bm{u}_h \bm{n}_F}, \bm{r}b_{\widetilde{F}}^2 )_F =h_F \Big((\mu  \nabla_h \bm{e}, \nabla(\nabla{\times} \bm{w}))_{\widetilde{K}}+(\nabla{\times} (\bm{f}+\mu \Delta \bm{u}_h), \bm{w})_{\widetilde{K}}\Big)\\
			&=h_F \Big((\mu  \nabla_h\bm{e}, \nabla(\nabla{\times} (b_\ell b_{\widetilde{F}}^2 \bm{r})))_{\widetilde{K}}+(\nabla{\times} (\bm{f}+\mu \Delta_h \bm{u}_h), b_\ell b_{\widetilde{F}}^2 \bm{r})_{\widetilde{K}}\Big)
=:T_1+T_2.
		\end{split}
		\label{eq:edge-normal-expression}
	\end{equation}
We first bound $T_1$. Using the bounds \eqref{eq:inverse},  \eqref{eq:extension-stability}, the boundedness of $b_F$, $\|\nabla b_F\|_{L^\infty(\widetilde{K})}\leq C h_F^{-1}$ and   $| b_F|_{W^{2,\infty}(\widetilde{K})}\leq C h_F^{-2}$, we deduce that
	\begin{equation}
		\begin{split}
	\|\nabla(\nabla{\times} &(b_\ell b_{\widetilde{F}}^2 \bm{r}))\|_{\widetilde{K}}=	\|\nabla(\nabla{\times} (b_F\bm{r}))\|_{\widetilde{K}}\leq | b_F\bm{r}|_{\bm{H}^2(\widetilde{K})}\\
			&\leq C\Big(\|b_F\|_{L^\infty(\widetilde{K})} | \bm{r}|_{\bm{H}^2(\widetilde{K})}+\|\nabla b_F\|_{\bm{L}^\infty(\widetilde{K})} |\bm{r}|_{\bm{H}^1(\widetilde{K})}+ | b_F|_{W^{2,\infty}(\widetilde{K})}\|\bm{r}\|_{\widetilde{K}}\Big)\\
			&\leq C \Big(| \bm{r}|_{\bm{H}^2(\widetilde{K})}+h_F^{-1} | \bm{r}|_{\bm{H}^1(\widetilde{K})}+h_F^{-2}\|\bm{r}\|_{\widetilde{K}}\Big)
\leq C(p) h_F^{-3/2} \|\bm{n}_F{\times} \jump{\nabla_h\bm{u}_h \bm{n}_F}\|_{F}.
		\end{split}
		\label{eq:curlcurl-r}
	\end{equation}
	Therefore, applying the Cauchy-Schwarz inequality to $T_1$, we obtain
	\begin{align*}
		T_1\leq C(p)h_F^{-1/2} \|\mu \nabla_h\bm{e}\|_{\widetilde{K}}  \|\bm{n}_F{\times} \jump{\nabla_h\bm{u}_h \bm{n}_F}\|_{F}.
	\end{align*}
Now we estimate $T_2$.	Invoking \eqref{eq:residual-curlf} and \eqref{eq:extension-stability} leads to
	\begin{equation}
		\begin{split}
			T_2&=h_F(b_{\widetilde{F}}^2 \nabla_h{\times} (\bm{f}+\mu \Delta_h \bm{u}_h), b_\ell \bm{r})_{\widetilde{K}}\leq h_F\|\nabla_h{\times} (\bm{f}+\mu \Delta_h \bm{u}_h)\|_{\widetilde{K}} \|\bm{r}\|_{\widetilde{K}}\\
			&\leq C \Big(h_F^{-\frac12}\mu\|\nabla_h\bm{e}\|_{\widetilde{K}}+h_F^{3/2}\|\nabla{\times} \bm{f}-\bm{\Pi}_K^{p-2}\nabla{\times} \bm{f}\|_{\widetilde{K}}\Big)\|\bm{n}_F{\times} \jump{\nabla_h\bm{u}_h \bm{n}_F}\|_{F}.
		\end{split}
		\label{eq:error-cell-f}
	\end{equation}
	Combining \eqref{eq:edge-normal-expression}, \eqref{eq:curlcurl-r} and \eqref{eq:error-cell-f} yields
	\begin{equation}
		\begin{split}
			&\mu (\bm{n}_F{\times} \jump{\nabla_h\bm{u}_h \bm{n}_F}, \bm{r}b_{\widetilde{F}}^2 )_F \\
&\leq C(p) \Big(h_F^{-\frac12}\|\mu\nabla_h\bm{e}\|_{\widetilde{K}}
			+h_F^{3/2}\|\nabla{\times} \bm{f}-\bm{\Pi}_K^{p-2}\nabla{\times} \bm{f}\|_{\widetilde{K}}\Big) \|\bm{n}_F{\times} \jump{\nabla_h\bm{u}_h \bm{n}_F}\|_{F}.
		\end{split}
		\label{eq:edge-normal-bubble}
	\end{equation}
	Using face version of \eqref{eq:inverse-bubble} with $\beta=2$ and $\alpha=0$, we have
	\begin{align*}
		\mu\|\bm{n}_F{\times} \jump{\nabla_h\bm{u}_h \bm{n}_F} \|_{F}^2\leq C(p) \mu (\bm{n}_F{\times} \jump{\nabla_h\bm{u}_h \bm{n}_F}, b_{\widetilde{F}}^2\bm{r})_F,
	\end{align*}
which combined with \eqref{eq:edge-normal-bubble} yields
	\begin{align}
		\mu h_F^{\frac12} \|\bm{n}_F{\times} \jump{\nabla_h\bm{u}_h \bm{n}_F} \|_{F}\leq C\Big(\|\mu\nabla_h\bm{e}\|_{\widetilde{K}}+h_F^{2}\|\nabla{\times} \bm{f}-\bm{\Pi}_K^{p-2}\nabla{\times} \bm{f}\|_{\widetilde{K}}\Big).\label{eq:control-nt}
	\end{align}

   (ii) Now we estimate the second term on the right-hand side of \eqref{eq:normal-tangential-decompose}. To this end, we decompose $\bm{u}_h$ into its normal and tangential components
    \[
\bm{u}_h|_F = (\bm{u}_h{\cdot}\bm{n}_F)\bm{n}_F + \bm{u}_t, \text{where}\;\bm{u}_t = \bm{n}_F{\times} (\bm{u}_h{\times} \bm{n}_F) \qquad \forall F\in \Fint\cap \partial K.
\]
Let  $\partial_{n} = \bm{n}_F{\cdot}\nabla$ denote the normal derivative, and let $\tdiv_\Gamma$ be the surface divergence operator. Since $\nabla{\cdot}\bm{u}_h=0$ pointwise and $\bm{n}_F$ is a constant unit normal, applying the divergence operator to the above decomposition yields the following result.
\begin{equation}
\begin{split}
0=\nabla {\cdot} \bm{u}_h|_F =\partial_n (\bm{u}_h{\cdot}\bm{n}_F)+(\bm{u}_h{\cdot}\bm{n}_F) \nabla{\cdot} \bm{n}_F +\tdiv_\Gamma \bm{u}_t-\bm{u}_t{\cdot} \partial_{n} \bm{n}_F =\partial_{n} (\bm{u}_h{\cdot}\bm{n}_F)+\tdiv_\Gamma \bm{u}_t.
\end{split}
\label{eq:div-free-split}
\end{equation}
Applying the product rule to $\partial_{n} (\bm{u}_h{\cdot} \bm{n}_F)$, we get
\[
\partial_{n} (\bm{u}_h{\cdot} \bm{n})
= (\partial_{n} \bm{u}_h){\cdot} \bm{n} + \bm{u}_h {\cdot} \partial_{n} \bm{n}_F=(\partial_{n} \bm{u}_h){\cdot} \bm{n}_F,
\]
which yields
\begin{align*}
\jump{(\nabla \bm{u}_h\,\bm{n}){\cdot} \bm{n}}_F
= -\tdiv_\Gamma \jump{(\bm{n}{\times} (\bm{u}_h{\times} \bm{n}))}_F .
\end{align*}
Then an application of the inverse equality \eqref{eq:inverse} on each face $F\in \Fint$ gives
\begin{align}
\|\jump{(\nabla \bm{u}_h\,\bm{n}){\cdot} \bm{n}}\|_{F}\leq C(p) h_F^{-1}\|\jump{(\bm{n}{\times} (\bm{u}_h{\times} \bm{n}))} \|_{F} \leq C(p) h_F^{-1}\|\jump{\bm{n}{\times}\bm{u}_h }\|_{F}.\label{eq:control-nn}
\end{align}
Summing over all the faces $F\in \partial K\backslash \partial \Omega$ and combining \eqref{eq:normal-tangential-decompose}, \eqref{eq:control-nt} and \eqref{eq:control-nn} yields the proof of  \eqref{eq:error-normal-flux}.

	(4) Proof of \eqref{eq:error-delta-uh}. Choosing $\bm{w}:=b_{\widetilde{F}}^2 \bm{r}$ as the test function in the error equation \eqref{eq:error-fourth}, where $\bm{r}:=E(\bm{n}{\times} \jump{\Delta_h\bm{u}_h})$ and recall that $b_{\widetilde{F}}$ is the standard face bubble function associated with the vertices of $F$ on each tetrahedron forming the kite $\widetilde{K}$. We deduce that
 	\begin{equation}
		\begin{split}
			(\mu \bm{n}_F{\times} \jump{\Delta_h\bm{u}_h},\bm{w})_F&=	(\mu \nabla_h \bm{e}, \nabla(\nabla{\times} \bm{w}))_{\widetilde{K}}+(\nabla_h{\times} (\bm{f}+\mu \Delta_h\bm{u}_h), \bm{w})_{\widetilde{K}}\\
&+(\mu \jump{\bm{n}{\times} (\nabla_h\bm{u}_h \bm{n})}, \nabla{\times} \bm{w})_F
=:S_1+S_2+S_3.
		\end{split}
		\label{eq:deltauh-face}
	\end{equation}
Proceeding in a fashion analogous to \eqref{eq:curlcurl-r}, we apply the Cauchy-Schwarz inequality to obtain
	\begin{align*}
		S_1&\leq  \|\mu \nabla_h \bm{e}\|_{\widetilde{K}} \|\nabla(\nabla{\times} \bm{w})\|_{\widetilde{K}}\leq C(p)  h_F^{-3/2}\|\mu \nabla_h\bm{e}\|_{\widetilde{K}}\|\bm{n}_F{\times} \jump{\Delta_h\bm{u}_h}\|_{F}.
	\end{align*}
	The properties of bubble function give $\|\bm{w}\|_{\widetilde{K}}\leq C \|\bm{r}\|_{\widetilde{K}}$, this combined with  \eqref{eq:residual-curlf} and \eqref{eq:extension-stability} leads to
	\begin{align*}
		S_2&\leq \|\nabla_h{\times} (\bm{f}+\mu \Delta_h\bm{u}_h)\|_{\widetilde{K}} \|\bm{w}\|_{\widetilde{K}}\leq C \|\nabla_h{\times} (\bm{f}+\mu \Delta_h\bm{u}_h)\|_{\widetilde{K}} \|\bm{r}\|_{\widetilde{K}}\\
		&\leq C(p) \Big(\frac{1}{h_F^{3/2}} \mu \|\nabla_h\bm{e}\|_{\widetilde{K}}+h_F^{ \frac12}\|\nabla{\times} \bm{f}-\bm{\Pi}_K^{p-2}\nabla{\times} \bm{f}\|_{\widetilde{K}}\Big)\|\bm{n}_F{\times} \jump{\Delta_h\bm{u}_h}\|_{F}.
	\end{align*}
	Now we estimate $S_3$.
	The chain rule gives
	\begin{align*}
		\nabla{\times} \bm{w}
		=\nabla {\times} (b_{\widetilde{F}}^2\bm{r})
&=b_{\widetilde{F}}^2 (\nabla {\times} \bm{r})
		+\nabla (b_{\widetilde{F}}^2) {\times} \bm{r}=b_{\widetilde{F}}
		\Big(
		b_{\widetilde{F}}(\nabla {\times} \bm{r})
		+2(\nabla b_{\widetilde{F}}){\times} \bm{r}
		\Big).
	\end{align*}
	Then, we can rewrite $S_3$ as
	\begin{align*}
		S_3&=(\mu \jump{\bm{n}{\times} (\nabla_h\bm{u}_h \bm{n})}, \nabla{\times} \bm{w})_F \\
&=(\mu \jump{\bm{n}{\times} (\nabla_h\bm{u}_h \bm{n})}b_{\widetilde{F}}, b_{\widetilde{F}}(\nabla {\times} \bm{r}))_F+(\mu \jump{\bm{n}{\times} (\nabla_h\bm{u}_h \bm{n})}b_{\widetilde{F}}, 2(\nabla b_{\widetilde{F}}){\times} \bm{r})_F=:S_{31}+S_{32}.
	\end{align*}
	The Cauchy-Schwarz inequality gives
	\begin{align*}
		S_{31}\leq \mu\|b_{\widetilde{F}}^{3/2}\jump{\bm{n}{\times} (\nabla_h\bm{u}_h \bm{n})}\|_{F} \|b_{\widetilde{F}}^{1/2}(\nabla {\times} \bm{r})\|_{F}.
	\end{align*}
	The trace inequality \eqref{eq: discrete trace}, inverse inequality \eqref{eq:inverse}, the stability of extension operator \eqref{eq:extension-stability} and the shape-regularity of the mesh  yield
	\begin{align*}
		\|b_{\widetilde{F}}^{1/2}\nabla {\times} \bm{r}\|_{F}\leq  \frac{C(p)}{h_K^{1/2}} \|\nabla{\times} \bm{r}\|_{\widetilde{K}}\leq C \frac{C(p)}{h_K^{3/2}} \|\bm{r}\|_{\widetilde{K}}\leq  \frac{C(p)}{h_F} \|\bm{n}_F{\times} \jump{\Delta_h \bm{u}_h}\|_{F}.
	\end{align*}
	This combined with \eqref{eq:control-nt} gives
	\begin{align*}
		S_{31}\leq C(p)h_F^{-3/2} \Big(\|\mu\nabla_h\bm{e}\|_{\widetilde{K}}+h_K\|\nabla{\times} \bm{f}-\bm{\Pi}_K^{p-2}\nabla{\times} \bm{f}\|_{\widetilde{K}}\Big) \| \bm{n}_F{\times} \jump{\Delta_h\bm{u}_h}\|_{F}.
	\end{align*}
	The Cauchy-Schwarz inequality, $ b_{\widetilde{F}}$, \eqref{eq: discrete trace} and \eqref{eq:control-nt} yield
	\begin{align*}
		S_{32}&\leq 2\|\mu \jump{\bm{n}{\times} (\nabla_h\bm{u}_h \bm{n})} \|(\nabla b_{\widetilde{F}}){\times} \bm{r}\|_{F}\\
        &\leq C(p)h_K^{-2} \Big(\|\mu\nabla_h\bm{e}\|_{\widetilde{K}}  + h_K^2\|\nabla{\times} \bm{f}-\bm{\Pi}_K^{p-2}\nabla{\times} \bm{f}\|_{\widetilde{K}} \Big)\|\bm{r}\|_{\widetilde{K}}\\
		&\leq C(p)h_K^{-3/2} \Big(\|\mu\nabla_h\bm{e}\|_{\widetilde{K}}  +h_K^2\|\nabla{\times} \bm{f}-\bm{\Pi}_K^{p-2}\nabla{\times} \bm{f}\|_{\widetilde{K}}\Big)\| \bm{n}_F{\times} \jump{\Delta_h\bm{u}_h}\|_{F}.
	\end{align*}
	Combining the above bounds with \eqref{eq:deltauh-face}, we obtain
	\begin{align}
		\mu	\| \bm{n}_F{\times} \jump{\Delta_h \bm{u}_h}b_{\widetilde{F}}\|_{F}\leq \frac{C(p)}{h_K^\frac32}\Big( \|\mu \nabla_h\bm{e}\|_{\widetilde{K}}+h_K^2\|\nabla{\times} \bm{f}-\bm{\Pi}_K^{p-2}\nabla{\times} \bm{f}\|_{\widetilde{K}}  \Big) {\times} \| \bm{n}_F\times\jump{\Delta_h\bm{u}_h}\|_{F}.\label{eq:deltauh-bubble}
	\end{align}
Invoking	\eqref{eq:inverse-bubble} with $\beta=2$ and $\alpha=0$, we infer that
	\begin{align*}
		\| \bm{n}_F{\times} \jump{\Delta_h \bm{u}_h}\|_{F}^2\leq  C	\| \bm{n}_F{\times} \jump{\Delta_h \bm{u}_h}b_{\widetilde{F}}\|_{F}^2.
	\end{align*}
	This and \eqref{eq:deltauh-bubble} give \eqref{eq:error-delta-uh}.

(5) Proof of \eqref{eq:error-tangential}.  An application of the inverse inequality \eqref{eq:inverse} on each face
$F\in\Fall$, combined with the fact that
$\bm{u}_h \in \bm{H}(\tdiv;\Omega)$, yields the desired result:
\begin{equation}
\left( \frac{h_F}{p}\right)
\|\bm{n}{\times} \jump{\nabla_h\bm{u}_h}\|_{F}^2
\leq
C\left( \frac{p^3}{h_F}\right)
\|\jump{\bm{u}_h}\|_{F}^2
=
C(p)\left( \frac{p^2}{h_F}\right)
\|\bm{n}_F{\times}\jump{\bm{u}_h}\|_{F}^2.
\end{equation}
Finally, the proof is completed by combining~\eqref{eq:error-cell-residual}--\eqref{eq:error-tangential}.
\end{proof}

\bibliographystyle{siam}
\bibliography{references}

@article {CostabelMcIntosh2010,
    AUTHOR = {Costabel, Martin and McIntosh, Alan},
     TITLE = {On {B}ogovski\u i\ and regularized {P}oincar\'e{} integral
              operators for de {R}ham complexes on {L}ipschitz domains},
   JOURNAL = {Math. Z.},
  FJOURNAL = {Mathematische Zeitschrift},
    VOLUME = {265},
      YEAR = {2010},
    NUMBER = {2},
     PAGES = {297--320},
      ISSN = {0025-5874,1432-1823},
   MRCLASS = {58J10 (35B65 35S05 47G30)},
  MRNUMBER = {2609313},
MRREVIEWER = {Horst\ Heck},
       DOI = {10.1007/s00209-009-0517-8},
       URL = {https://doi.org/10.1007/s00209-009-0517-8},
}

@article {LedererSchoberl18,
    AUTHOR = {Lederer, Philip L. and Sch\"oberl, Joachim},
     TITLE = {Polynomial robust stability analysis for {$H({\rm
              div})$}-conforming finite elements for the {S}tokes equations},
   JOURNAL = {IMA J. Numer. Anal.},
  FJOURNAL = {IMA Journal of Numerical Analysis},
    VOLUME = {38},
      YEAR = {2018},
    NUMBER = {4},
     PAGES = {1832--1860},
      ISSN = {0272-4979,1464-3642},
   MRCLASS = {65N30 (65N12 76D07 76M10 76S05)},
  MRNUMBER = {3867384},
MRREVIEWER = {Hans-Peter\ Helfrich},
       DOI = {10.1093/imanum/drx051},
       URL = {https://doi.org/10.1093/imanum/drx051},
}

@unpublished{chaumontfrelet:hal-05204325,
  TITLE = {{Computable Poincar{\'e}--Friedrichs constants for the $L^p$ de Rham complex over convex domains and domains with shellable triangulations}},
  AUTHOR = {Chaumont-Frelet, T. and Licht, M. W. and Vohral{\'i}k, M.},
  NOTE = {Preprint, \texttt{https://inria.hal.science/hal-05204325}},
  YEAR = {2025},
  HAL_ID = {hal-05204325},
  HAL_VERSION = {v1},
}

@article {LeePre:79,
    AUTHOR = {Lee, D. T. and Preparata, F. P.},
     TITLE = {An optimal algorithm for finding the kernel of a polygon},
   JOURNAL = {J. Assoc. Comput. Mach.},
  FJOURNAL = {Journal of the Association for Computing Machinery},
    VOLUME = {26},
      YEAR = {1979},
    NUMBER = {3},
     PAGES = {415--421},
       optDOI = {10.1145/322139.322142},
       optURL = {https://doi-org.extranet.enpc.fr/10.1145/322139.322142},
}

@unpublished{dong:hal-05498158,
  TITLE = {{$hp$-a posteriori error estimates for hybrid high-order methods applied to biharmonic problems}},
  AUTHOR = {Dong, Zhaonan and Ern, Alexandre and Wadhawan, Tanvi},
  URL = {https://hal.science/hal-05498158},
  NOTE = {working paper or preprint},
  YEAR = {2026},
  MONTH = Feb,
  HAL_ID = {hal-05498158},
  HAL_VERSION = {v1},
}

@book {Ern_Guermond_FEs_I_2021,
	AUTHOR = {Ern, A. and Guermond, J.-L.},
	TITLE = {Finite Elements {I}: {A}pproximation and Interpolation},
	VOLUME = {72},
	SERIES = {Texts in Applied Mathematics},
	PUBLISHER = {Springer Nature},
	ADDRESS = {Cham, Switzerland},
	YEAR = {2021},
	optPAGES = {},
}

@article {CarGraTra24,
    AUTHOR = {Carstensen, Carsten and Gr\"a{\ss}le, Benedikt and Tran, Ngoc
              Tien},
     TITLE = {Adaptive hybrid high-order method for guaranteed lower
              eigenvalue bounds},
   JOURNAL = {Numer. Math.},
  FJOURNAL = {Numerische Mathematik},
    VOLUME = {156},
      YEAR = {2024},
    NUMBER = {3},
     PAGES = {813--851},
      ISSN = {0029-599X,0945-3245},
   MRCLASS = {65N12 (65N30 65Y20)},
  MRNUMBER = {4755183},
MRREVIEWER = {Martin\ Vohral\'ik},
       DOI = {10.1007/s00211-024-01407-w},
       URL = {https://doi.org/10.1007/s00211-024-01407-w},
}

@ARTICLE
{babuskasurihpversionFEMwithquasiuniformmesh,
  title={The $hp$ version of the Finite Element Method with Quasiuniform Meshes},
  author={Babu\v{s}ka, I. and Suri, M.},
  journal={ESAIM Math. Model. Numer. Anal.},
  volume={21},
  number={2},
  pages={199--238},
  year={1987},
  publisher={EDP Sciences}
}

@ARTICLE
{BabuSurioptimalconvergenceestimatepmethods,
  title={The optimal convergence rate of the $p$-version of the Finite Element Method},
  author={Babu\v{s}ka, I. and Suri, M.},
  journal={SIAM J. Numer. Anal.},
  volume={24},
  number={4},
  pages={750--776},
  year={1987},
  publisher={SIAM}
}

@book{Girault86,
	author = {V. Girault and P.-A. Raviart},
	title = {Finite Element Methods for {N}avier-{S}tokes Equations},
	year = {1986},
	publisher ={Springer Berlin, Heidelberg}
}

@article {Warburton03,
    AUTHOR = {Warburton, T. and Hesthaven, J. S.},
     TITLE = {On the constants in {$hp$}-finite element trace inverse
              inequalities},
   JOURNAL = {Comput. Methods Appl. Mech. Engrg.},
  FJOURNAL = {Computer Methods in Applied Mechanics and Engineering},
    VOLUME = {192},
      YEAR = {2003},
    NUMBER = {25},
     PAGES = {2765--2773},
      ISSN = {0045-7825,1879-2138},
   MRCLASS = {65N30},
  MRNUMBER = {1986022},
       DOI = {10.1016/S0045-7825(03)00294-9},
       URL = {https://doi.org/10.1016/S0045-7825(03)00294-9},
}

@article {Hannukainen12,
    AUTHOR = {Hannukainen, Antti and Stenberg, Rolf and Vohral\'ik, Martin},
     TITLE = {A unified framework for a posteriori error estimation for the
              {S}tokes problem},
   JOURNAL = {Numer. Math.},
  FJOURNAL = {Numerische Mathematik},
    VOLUME = {122},
      YEAR = {2012},
    NUMBER = {4},
     PAGES = {725--769},
      ISSN = {0029-599X,0945-3245},
   MRCLASS = {65N15 (65N08 65N30 76D07 76M10 76M12)},
  MRNUMBER = {2995179},
MRREVIEWER = {Muthusamy\ Vanninathan},
       DOI = {10.1007/s00211-012-0472-x},
       URL = {https://doi.org/10.1007/s00211-012-0472-x},
}

@article {Bacuta16,
    AUTHOR = {Bacuta, Constantin},
     TITLE = {Sharp stability and approximation estimates for symmetric
              saddle point systems},
   JOURNAL = {Appl. Anal.},
  FJOURNAL = {Applicable Analysis. An International Journal},
    VOLUME = {95},
      YEAR = {2016},
    NUMBER = {1},
     PAGES = {226--237},
      ISSN = {0003-6811,1563-504X},
   MRCLASS = {65J10 (65N30 74S05)},
  MRNUMBER = {3426198},
MRREVIEWER = {Marius\ Ghergu},
       DOI = {10.1080/00036811.2014.1002483},
       URL = {https://doi.org/10.1080/00036811.2014.1002483},
}

@article{Melenk05,
	author = {Melenk, J. M.},
	title = {hp-Interpolation of Nonsmooth Functions and an Application to hp-A posteriori Error Estimation},
	journal = {SIAM J. Numer. Anal.},
	volume = {43},
	number = {1},
	pages = {127--155},
	year = {2005}
}

@article {KarMel2015,
    AUTHOR = {Karkulik, M. and Melenk, J. M.},
     TITLE = {Local high-order regularization and applications to
              {$hp$}-methods},
   JOURNAL = {Comput. Math. Appl.},
  FJOURNAL = {Computers \& Mathematics with Applications. An International
              Journal},
    VOLUME = {70},
      YEAR = {2015},
    NUMBER = {7},
     PAGES = {1606--1639},
      ISSN = {0898-1221,1873-7668},
   MRCLASS = {65N30 (65N15 65N38)},
  MRNUMBER = {3396963},
       DOI = {10.1016/j.camwa.2015.06.026},
       URL = {https://doi.org/10.1016/j.camwa.2015.06.026},
}

@article{dong:hal-04720237,
  TITLE = {{$hp$-error analysis of mixed-order hybrid high-order methods for elliptic problems on simplicial meshes}},
  AUTHOR = {Dong, Zhaonan and Ern, Alexandre},
  URL = {https://inria.hal.science/hal-04720237},
  VOLUME = {to appear},
  JOURNAL = {{Numerische Mathematik}},
  PUBLISHER = {{Springer Verlag}},
  YEAR = {2026},
  HAL_ID = {hal-04720237},
  HAL_VERSION = {v3},
}

@article{dong:hal-05213366,
  TITLE = {{$\boldsymbol {H}  (\textbf{curl})$-reconstruction of piecewise polynomial fields with application to $hp$-a posteriori nonconforming error analysis for Maxwell's equations}},
  AUTHOR = {Dong, Z. and Ern, A.},
  URL = {https://inria.hal.science/hal-05213366},
	VOLUME = {to appear},
  JOURNAL = {{SIAM J. Numer. Anal.}},
  PUBLISHER = {{Society for Industrial and Applied Mathematics}},
  YEAR = {2026},
  HAL_ID = {hal-05213366},
  HAL_VERSION = {v3},
}

@article{Melenk01,
	title={On residual-based a posteriori error estimation in hp-{FEM}},
	author={Melenk, J. M. and Wohlmuth, B. I.},
	journal={Adv. Comput. Math.},
	volume={15},
	number={1},
	pages={311--331},
	year={2001},
	publisher={Springer}
}

@book{Schwab98,
	title={p-and hp-finite element methods. Theory and applications in solid and fluid mechanics},
	author={C. Schwab},
	year={1998},
	publisher={Clarendon Press, Oxford}
}

@article {AinsworkthParker1,
    AUTHOR = {Ainsworth, Mark and Parker, Charles},
     TITLE = {Mass conserving mixed {$hp$}-{FEM} approximations to {S}tokes
              flow. {P}art {I}: {U}niform stability},
   JOURNAL = {SIAM J. Numer. Anal.},
  FJOURNAL = {SIAM Journal on Numerical Analysis},
    VOLUME = {59},
      YEAR = {2021},
    NUMBER = {3},
     PAGES = {1218--1244},
      ISSN = {0036-1429,1095-7170},
   MRCLASS = {65N30 (35Q30 65N12 76D07 76M10)},
  MRNUMBER = {4256089},
       DOI = {10.1137/20M1359109},
       URL = {https://doi.org/10.1137/20M1359109},
}

@article{Chernov12,
	author = {A. Chernov},
	journal = {Math. Comp.},
	number = {278},
	pages = {765--787},
	publisher = {American Mathematical Society},
	title = {OPTIMAL CONVERGENCE ESTIMATES FOR THE TRACE OF THE POLYNOMIAL ${L}^2$-PROJECTION OPERATOR ON A SIMPLEX},
	urldate = {2025-10-09},
	volume = {81},
	year = {2012}
}

@article{Melenkhp,
	title={On commuting $p-$version projection-based interpolation on tetrahedra},
	author={Melenk, J. M. and Rojik, C.},
	year={2020},
	pages = {45--87},
	Journal = {Math. Comp.},
	publisher={Compet}
}

@article{GuzmanAbner21,
	title = {Estimation of the continuity constants for Bogovski\u{i} and regularized Poincar\'{e} integral operators},
	journal = {J. Math. Anal. Appl.},
	volume = {502},
	number = {1},
	pages = {125246},
	year = {2021},
	author = {J. Guzm\'{a}n and A. J. Salgado},
}

@article{Dong21,
	author = {Dong, Z. and Mascotto, L. and Sutton, O. J.},
	title = {Residual-Based A Posteriori Error Estimates for \$hp\$-Discontinuous {G}alerkin Discretizations of the Biharmonic Problem},
	journal = {SIAM J. Numer. Anal.},
	volume = {59},
	number = {3},
	pages = {1273--1298},
	year = {2021}
}

@article{Ahmed21,
	author = {Ahmed, N. and Barrenechea, G. R. and Burman, E. and Guzm\'{a}n, J. and Linke, A. and Merdon, C.},
	title = {A Pressure-Robust Discretization of {O}seen's Equation Using Stabilization in the Vorticity Equation},
	journal = {SIAM J. Numer. Anal.},
	volume = {59},
	number = {5},
	pages = {2746-2774},
	year = {2021}
}

@book {Boffibook,
    AUTHOR = {Boffi, Daniele and Brezzi, Franco and Fortin, Michel},
     TITLE = {Mixed finite element methods and applications},
    SERIES = {Springer Series in Computational Mathematics},
    VOLUME = {44},
 PUBLISHER = {Springer, Heidelberg},
      YEAR = {2013},
     PAGES = {xiv+685},
      ISBN = {978-3-642-36518-8; 978-3-642-36519-5},
   MRCLASS = {65-02 (65M60 65N30)},
  MRNUMBER = {3097958},
MRREVIEWER = {Beny\ Neta},
       DOI = {10.1007/978-3-642-36519-5},
       URL = {https://doi.org/10.1007/978-3-642-36519-5},
}

@book{Verfurth2013posteriori,
	title={A posteriori error estimation techniques for finite element methods},
	author={Verf{\"u}rth, R.},
	year={2013},
	publisher={Oxford University Press},
	address={Oxford}
}

@article{VACMG17,
	author = {John, Volker and Linke, Alexander and Merdon, Christian and Neilan, Michael and Rebholz, Leo G.},
	title = {On the Divergence Constraint in Mixed Finite Element Methods for Incompressible Flows},
	journal = {SIAM Review},
	volume = {59},
	number = {3},
	pages = {492--544},
	year = {2017}
}

@article {LCJ19,
	AUTHOR = {Lederer, Philip Lukas and Merdon, Christian and Sch\"oberl,
	Joachim},
	TITLE = {Refined a posteriori error estimation for classical and
	pressure-robust {S}tokes finite element methods},
	JOURNAL = {Numer. Math.},
	FJOURNAL = {Numerische Mathematik},
	VOLUME = {142},
	YEAR = {2019},
	NUMBER = {3},
	PAGES = {713--748}
}

@article {LC22,
	AUTHOR = {Lederer, Philip L. and Merdon, Christian},
	TITLE = {Guaranteed upper bounds for the velocity error of
	pressure-robust {S}tokes discretisations},
	JOURNAL = {J. Numer. Math.},
	FJOURNAL = {Journal of Numerical Mathematics},
	VOLUME = {30},
	YEAR = {2022},
	NUMBER = {4},
	PAGES = {267--294},
	ISSN = {1570-2820,1569-3953}
}

@article {JLJ20,
	AUTHOR = {Gopalakrishnan, Jay and Lederer, Philip L. and Sch\"oberl,
	Joachim},
	TITLE = {A mass conserving mixed stress formulation for the {S}tokes
	equations},
	JOURNAL = {IMA J. Numer. Anal.},
	FJOURNAL = {IMA Journal of Numerical Analysis},
	VOLUME = {40},
	YEAR = {2020},
	NUMBER = {3},
	PAGES = {1838--1874}
}

@article{Schoberl14,
  title={C++ 11 implementation of finite elements in NGSolve},
  author={Sch{\"o}berl, Joachim},
  journal={Institute for analysis and scientific computing, Vienna University of Technology},
  volume={30},
  year={2014}
}

@article {Zhang05,
	AUTHOR = {Zhang, Shangyou},
	TITLE = {A new family of stable mixed finite elements for the 3{D}
	{S}tokes equations},
	JOURNAL = {Math. Comp.},
	FJOURNAL = {Mathematics of Computation},
	VOLUME = {74},
	YEAR = {2005},
	NUMBER = {250},
	PAGES = {543--554}
}

@article {LLMJ17,
	AUTHOR = {Lederer, Philip L. and Linke, Alexander and Merdon, Christian
	and Sch\"oberl, Joachim},
	TITLE = {Divergence-free reconstruction operators for pressure-robust
	{S}tokes discretizations with continuous pressure finite
	elements},
	JOURNAL = {SIAM J. Numer. Anal.},
	FJOURNAL = {SIAM Journal on Numerical Analysis},
	VOLUME = {55},
	YEAR = {2017},
	NUMBER = {3},
	PAGES = {1291--1314}
}

@article {ZCL20,
	AUTHOR = {Zhao, Lina and Chung, Eric and Lam, Ming Fai},
	TITLE = {A new staggered {DG} method for the {B}rinkman problem robust
	in the {D}arcy and {S}tokes limits},
	JOURNAL = {Comput. Methods Appl. Mech. Engrg.},
	FJOURNAL = {Computer Methods in Applied Mechanics and Engineering},
	VOLUME = {364},
	YEAR = {2020},
	PAGES = {112986, 18}
}

@article{KS14,
	author = {Kanschat, Guido and Sharma, Natasha},
	title = {Divergence-Conforming Discontinuous {G}alerkin Methods and ${C}^0$ Interior Penalty Methods},
	journal = {SIAM J. Numer. Anal.},
	volume = {52},
	number = {4},
	pages = {1822--1842},
	year = {2014}
}

@article{LinkeMerdon16,
	author  = {Alexander Linke and Christian Merdon},
	title   = {Pressure-robustness and discrete {H}elmholtz projectors in mixed finite element methods for the incompressible {N}avier--{S}tokes equations},
	JOURNAL = {Comput. Methods Appl. Mech. Engrg.},
FJOURNAL = {Computer Methods in Applied Mechanics and Engineering},
	volume  = {311},
	pages   = {304--326},
	year    = {2016}
}

@article{Verfurth89,
	author  = {Verf{\"u}rth, R{\"u}diger},
	title   = {A Posteriori Error Estimators for the {S}tokes Equations},
	journal = {Numer. Math.},
	volume  = {55},
	pages   = {309--325},
	year    = {1989}
}

@article{DorflerAinsworth2005,
	author  = {Willy D{\"o}rfler and Mark Ainsworth},
	title   = {Reliable a Posteriori Error Control for Non-Conforming Finite Element Approximation of {S}tokes Flow},
	journal = {Math. Comp.},
	volume   = {74},
	number   = {252},
	pages    = {1599--1619},
	year     = {2005}
}

@article{CarstensenFunken2001,
	author       = {Carstensen, Carsten and Funken, Stefan A.},
	title        = {A Posteriori Error Control in Low-Order Finite Element
	Discretisations of Incompressible Stationary Flow Problems},
	journal= {Math. Comp.},
	year         = {2001},
	volume       = {70},
	number       = {236},
	pages        = {1353--1381}
}

@article{CarstensenKimPark2011,
	author       = {Carstensen, Carsten and Kim, Dongho and Park, Eun-Jae},
	title        = {A Priori and A Posteriori Pseudostress-Velocity Mixed
	Finite Element Error Analysis for the {S}tokes Problem},
	journal = {SIAM J. Numer. Anal.},
	year         = {2011},
	volume       = {49},
	number       = {6},
	pages        = {2501--2523}
}

@article{DariDuranPadra1995,
	author       = {Dari, Enzo and Dur{\'a}n, Ricardo G. and Padra, Claudio},
	title        = {Error Estimators for Nonconforming Finite Element Approximations of the {S}tokes Problem},
	journal = {Math. Comp.},
	year         = {1995},
	volume       = {64},
	number       = {211},
	pages        = {1017--1033}
}

@article{AinsworthOden1997,
	author       = {Ainsworth, Mark and Oden, J. Tinsley},
	title        = {A Posteriori Error Estimators for the {S}tokes and {O}seen Equations},
	journal = {SIAM J. Numer. Anal.},
	year         = {1997},
	volume       = {34},
	number       = {1},
	pages        = {228--245}
}

@article {CG26,
	AUTHOR = {Carstensen, Carsten and Gr\"assle, Benedikt},
	TITLE = {Adaptive {M}orley {FEM} for 2{D} stationary {N}avier-{S}tokes},
	JOURNAL = {Math. Comp.},
	FJOURNAL = {Mathematics of Computation},
	VOLUME = {95},
	YEAR = {2026},
	NUMBER = {358},
	PAGES = {613--645},
	ISSN = {0025-5718,1088-6842},
	MRCLASS = {65N30 (65N12 65N50)},
	MRNUMBER = {4999538},
	DOI = {10.1090/mcom/4069},
	URL = {https://doi.org/10.1090/mcom/4069}
}

@article {GuzmanNeilan2014,
	AUTHOR = {Guzm\'an, Johnny and Neilan, Michael},
	TITLE = {Conforming and divergence-free {S}tokes elements on general triangular meshes},
	JOURNAL = {Math. Comp.},
	FJOURNAL = {Mathematics of Computation},
	VOLUME = {83},
	YEAR = {2014},
	NUMBER = {285},
	PAGES = {15--36},
	ISSN = {0025-5718,1088-6842},
	MRCLASS = {65N30 (65N12 76D07 76M10)},
	MRNUMBER = {3120580},
	MRREVIEWER = {Marius\ Ghergu},
	DOI = {10.1090/S0025-5718-2013-02753-6},
	URL = {https://doi.org/10.1090/S0025-5718-2013-02753-6}
}

@article {GuzmanNeilan2014b,
	AUTHOR = {Guzm\'an, Johnny and Neilan, Michael},
	TITLE = {Conforming and divergence-free {S}tokes elements in three
	dimensions},
	JOURNAL = {IMA J. Numer. Anal.},
	FJOURNAL = {IMA Journal of Numerical Analysis},
	VOLUME = {34},
	YEAR = {2014},
	NUMBER = {4},
	PAGES = {1489--1508},
	ISSN = {0272-4979,1464-3642},
	MRCLASS = {65N30 (65N12 76D07)},
	MRNUMBER = {3269433},
	MRREVIEWER = {David\ Maltese},
	DOI = {10.1093/imanum/drt053},
	URL = {https://doi.org/10.1093/imanum/drt053},
}

@article {EggerWaluga2013,
	AUTHOR = {Egger, Herbert and Waluga, Christian},
	TITLE = {{$hp$} analysis of a hybrid {DG} method for {S}tokes flow},
	JOURNAL = {IMA J. Numer. Anal.},
	FJOURNAL = {IMA Journal of Numerical Analysis},
	VOLUME = {33},
	YEAR = {2013},
	NUMBER = {2},
	PAGES = {687--721},
	ISSN = {0272-4979,1464-3642},
	MRCLASS = {65N30 (76D07)},
	MRNUMBER = {3047948},
	DOI = {10.1093/imanum/drs018},
	URL = {https://doi.org/10.1093/imanum/drs018}
}

@article {LiuLiuPego2007,
    AUTHOR = {Liu, Jian-Guo and Liu, Jie and Pego, Robert L.},
     TITLE = {Stability and convergence of efficient {N}avier-{S}tokes
              solvers via a commutator estimate},
   JOURNAL = {Comm. Pure Appl. Math.},
  FJOURNAL = {Communications on Pure and Applied Mathematics},
    VOLUME = {60},
      YEAR = {2007},
    NUMBER = {10},
     PAGES = {1443--1487},
      ISSN = {0010-3640,1097-0312},
   MRCLASS = {76D05 (35Q30 65M06 76M25)},
  MRNUMBER = {2342954},
MRREVIEWER = {Jean-Luc\ Guermond},
       DOI = {10.1002/cpa.20178},
       URL = {https://doi.org/10.1002/cpa.20178},
}

@article {ZPC22,
    AUTHOR = {Zhao, Lina and Park, Eun-Jae and Chung, Eric},
     TITLE = {A pressure robust staggered discontinuous {G}alerkin method
              for the {S}tokes equations},
   JOURNAL = {Comput. Math. Appl.},
  FJOURNAL = {Computers \& Mathematics with Applications. An International
              Journal},
    VOLUME = {128},
      YEAR = {2022},
     PAGES = {163--179},
      ISSN = {0898-1221,1873-7668},
   MRCLASS = {65N30 (35Q30 76M30)},
  MRNUMBER = {4504492},
MRREVIEWER = {Sarvesh\ Kumar},
       DOI = {10.1016/j.camwa.2022.10.019},
       URL = {https://doi.org/10.1016/j.camwa.2022.10.019}
}
\end{document}